\newtheorem*{rep@theorem}{\rep@title}
\newcommand{\newreptheorem}[2]{%
\newenvironment{rep#1}[1]{%
 \def\rep@title{#2 \ref{##1}}%
 \begin{rep@theorem}}%
 {\end{rep@theorem}}}
\newtheorem{thm}{Theorem}
\newtheorem*{thm*}{Theorem}
\newtheorem*{lem*}{Lemma}
\newtheorem{lem}[thm]{Lemma}
\newtheorem{prop}[thm]{Proposition}
\newtheorem*{prop*}{Proposition}
\newtheorem{cor}[thm]{Corollary}
\theoremstyle{definition}
\newtheorem{ex}[thm]{Example}
\numberwithin{thm}{section}
\DeclareMathOperator{\id}{id}
\DeclareMathOperator{\vol}{vol}
\DeclareMathOperator{\mass}{mass}
\DeclareMathOperator{\supp}{supp}
\DeclareMathOperator{\rk}{rk}
\newcommand{\ph}{\varphi}
\newcommand{\epsi}{\varepsilon}
\newcommand{\CC}{\mathbb{C}}
\newcommand{\QQ}{\mathbb{Q}}
\newcommand{\NN}{\mathbb{N}}
\newcommand{\ZZ}{\mathbb{Z}}
\newcommand{\RR}{\mathbb{R}}
\newcommand{\HH}{\mathbb{H}}
\newcommand{\paren}[1]{\left({#1}\right)}
\newcommand{\abs}[1]{\left|{#1}\right|}
\newextarrow{\xrelbar}{5599}{\relbar\relbar{\relbar\vphantom{\rightarrow}}}
\begin{document}
\title{On elliptic and quasiregularly elliptic manifolds}
\author{Fedor Manin}
\author{Eden Prywes}
\begin{abstract}
    In his book \emph{Metric structures for Riemannian and non-Riemannian spaces}, Gromov defined two properties of Riemannian manifolds, \emph{ellipticity} and \emph{quasiregular ellipticity}, and suggested that there may be a connection between the two.  Since then, groups of researchers working independently have proved strikingly similar results about these two concepts.  We obtain new topological obstructions to the two properties: most notably, we show that closed manifolds of both types must have virtually abelian fundamental group.  We also give the first examples of open manifolds which are elliptic but not quasireguarly elliptic and vice versa.  Whether there is a direct connection between these properties---and, in particular, whether they are equivalent for closed manifolds---remains elusive.
\end{abstract}
\maketitle

\section{Introduction}
Language barriers between mathematical subfields often lead to the same ideas being developed independently in different contexts.  In this paper, we draw renewed attention to the parallels between elliptic and quasiregularly elliptic manifolds.  Both properties are defined by the existence of certain maps from Euclidean spaces; the two types of maps naturally lend themselves to different tools.  Although a link between the two notions was already suggested by Gromov in \cite[\S2.E and 6.D]{GrMS}, similar topological obstructions for ellipticity and quasiregular ellipticity have since been developed in parallel, among others by the two authors.  The similarity between both the statements and the proofs in the two cases supports Gromov's intuition.

We summarize the state of the field and prove several new results.  For example, we show that elliptic and quasiregularly elliptic closed manifolds must have abelian fundamental group; this follows from known restrictions on the topology of such manifolds.  On the other hand, we give a number of examples showing that, for open manifolds, ellipticity and quasiregular ellipticity do not coincide.

The most obvious open question remains: do they coincide for closed manifolds?

\subsection{Definitions and their motivations}
Let $M$ be an oriented Riemannian $n$-manifold.

A locally Lipschitz map $f:\RR^n \to M$ is said to have \emph{positive asymptotic degree} if
\[\limsup_{R \to \infty} R^{-n}\int_{B_R(\RR^n)} f^*d\vol_M>0.\]
Gromov \cite[2.41]{GrMS} suggested calling a manifold $M$ \emph{elliptic}\footnote{This terminology is inspired by, but should not be confused with, the rather distinct notion of elliptic spaces from rational homotopy theory.  We sometimes say ``Gromov-elliptic'' to emphasize this difference.} if there is a Lipschitz map $f:\RR^n \to M$ of positive asymptotic degree.  One might as well demand that $f$ be $1$-Lipschitz, in which case the map can be thought of as efficiently wrapping $M$ with an infinite sheet of wrapping paper; to be more concise, we will call such a map a \emph{wrapping map}.

Recently \cite{BeM,BeGM}, ellipticity has been shown to be crucial to understanding Lipschitz maps to $M$ from other manifolds.  For example, given a nullhomotopic $L$-Lipschitz map $N \to M$, is there a $C(L+1)$-Lipschitz nullhomotopy, for some constant $C=C(M,N)$?  This turns out to be true if $M$ is elliptic, simply connected, and \emph{formal}, a condition from rational homotopy theory, and false if $M$ is simply connected and formal but not elliptic.  Spaces satisfying all three conditions are called \emph{scalable}.  More general conjectures of Gromov about how the rational homotopy class of a map is constrained by its Lipschitz constant \cite[Ch.~7]{GrMS} are likewise confirmed for scalable spaces and false for spaces which are simply connected and formal but not scalable.

A map $f:N \to M$ between oriented Riemannian $n$-manifolds is said to be \emph{quasiregular} if $f \in W^{1,n}_{\text{loc}}(\RR^n,M)$ and there is a $K>0$ such that for almost every $x \in M$,
\[|Df(x)|^n \leq K\det Df(x).\]
Nonconstant quasiregular maps are discrete (in the sense that the preimage of every point is a discrete set) and open due to a theorem of Reshetnyak \cite{Reshet}.  They should therefore be thought of as generalized branched coverings with local geometric control.  A manifold is said to be \emph{quasiregularly} (or \emph{QR}) \emph{elliptic}, a term introduced by Bonk and Heinonen \cite{BH} following Gromov's template, if it admits a nonconstant quasiregular map from $\RR^n$.

The motivation for studying quasiregular maps comes from complex analysis, as the quasiregular condition is a relaxation of holomorphicity.  Holomorphic maps in one complex dimension satisfy
\[|Df(x)|^2=\det Df(x).\]
In other words, they are conformal except where they have zero derivative.  In higher dimensions, Liouville's theorem shows that maps satisfying this condition are either constant or restrictions of M\"obius transformations.  Studying quasiregular maps means understanding the degree to which relaxing this condition increases the universe of maps satisfying it.

Finally, a map $f:N \to M$ between metric spaces is said to have \emph{bounded length distortion} (\emph{BLD}) if it is locally bilipschitz along paths, that is, there is a constant $L$ such that for all paths $\gamma:[0,1] \to N$,
\[L^{-1}\operatorname{length}(\gamma) \leq \operatorname{length}(f(\gamma)) \leq L\operatorname{length}(\gamma).\]
Following Le Donne and Pankka \cite{LDPa}, we say that a Riemannian $n$-manifold $M$ is \emph{BLD-elliptic} if it admits a BLD map from $\RR^n$.  Since BLD maps $\RR^n \to M$ are both quasiregular and wrapping maps, every BLD-elliptic manifold is both elliptic and quasiregularly elliptic.  On the other hand, Pankka has suggested that every quasiregularly elliptic manifold may be BLD-elliptic.  This would explain many of the seeming coincidences observed in this paper.

It follows directly from each definition that if $\tilde M \to M$ is a Riemannian covering map, then $\tilde M$ is (Gromov-, quasiregularly, BLD-) elliptic if and only if $M$ is.  We use this fact without comment below.

\subsection{Obstructions to ellipticity and quasiregular ellipticity}
We know a number of results that restrict the topology of a closed elliptic or quasiregularly elliptic manifold.  Strikingly, all the results we discuss hold for both types, which suggests a possible close relationship between the two definitions.

One of the earliest known obstructions is the following:
\begin{thm}[{Varopoulos \cite[Theorem X.5.1]{vsc}, Gromov \cite[Theorem 2.42]{GrMS}}] \label{thm:pi1}
  If $M$ is an elliptic or quasiregularly elliptic closed $n$-manifold, then the growth rate of $\pi_1(M)$ is $O(R^n)$.  In particular, $\pi_1(M)$ is virtually nilpotent.
\end{thm}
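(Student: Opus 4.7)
The plan is to reduce the bound on $\pi_1(M)$ to a volume-growth bound on the universal cover $\tilde M$ and then establish that bound via potential-theoretic arguments. Since $M$ is compact, $\pi_1(M)$ is finitely generated and, by the Milnor--Svarc lemma, its word-metric growth function is comparable to the Riemannian volume growth function $R \mapsto \vol B_R(\tilde M)$. Thus it suffices to prove $\vol B_R(\tilde M) = O(R^n)$, after which the virtual nilpotency follows from Gromov's polynomial growth theorem applied to the finitely generated group $\pi_1(M)$.

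To exploit the hypothesis, lift $f \colon \RR^n \to M$ to $\tilde f \colon \RR^n \to \tilde M$, which exists because $\RR^n$ is simply connected. The lift inherits the regularity of $f$: it is $1$-Lipschitz in the elliptic case and $K$-quasiregular (with the same constant) in the QR case. Since $\tilde M \to M$ is a local isometry, $\tilde f^* d\vol_{\tilde M}$ on $\RR^n$ agrees with $f^* d\vol_M$, so positive asymptotic degree is preserved: $\int_{B_R(\RR^n)} \tilde f^* d\vol_{\tilde M} \geq c R^n$ for all sufficiently large $R$.

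The core step is then to turn this into the bound $\vol B_R(\tilde M) \leq C R^n$. In the QR case, this is Varopoulos' argument: quasiregular maps transfer the $n$-parabolicity of $\RR^n$ to $\tilde M$ via the compatibility of QR maps with $n$-harmonic potential theory (Reshetnyak's theorem and composition of $n$-harmonic functions with QR maps), and cocompactness of the $\pi_1(M)$-action on $\tilde M$ then upgrades $n$-parabolicity to polynomial volume growth of degree at most $n$. In the Gromov-elliptic case one runs a parallel argument in which the conformal control of quasiregularity is replaced by the combined $1$-Lipschitz and positive-asymptotic-degree hypotheses, from which an analogous $n$-capacity estimate on $\tilde M$ is extracted by more direct integral-geometric means; cocompactness once more converts the capacity bound into polynomial volume growth.

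The main obstacle is precisely this potential-theoretic transfer, and especially its Lipschitz avatar. The analytic machinery available for quasiregular maps---Reshetnyak's theorem, $n$-harmonic composition, capacity estimates, and the Liouville property for bounded $n$-harmonic functions on $\RR^n$---has no immediate Lipschitz analog, so in the elliptic case one must produce a capacity estimate on $\tilde M$ directly from the pair (1-Lipschitz, positive asymptotic degree) via integral-geometric bookkeeping. The striking parallel between the QR and elliptic conclusions, despite their proofs proceeding by genuinely different technical means, is the very phenomenon that motivates the present paper.
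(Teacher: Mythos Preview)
Your proposal takes a different route from the paper and, more importantly, leaves the elliptic case as an acknowledged gap. The paper's argument is isoperimetric rather than potential-theoretic, and with that framing the elliptic case becomes the \emph{easy} one. After lifting to $\tilde f:\RR^n\to\tilde M$ as you do, the $1$-Lipschitz hypothesis immediately bounds the $(n-1)$-volume (with multiplicity) of $\tilde f(\partial B_R)$ by $O(R^{n-1})$. If sets $S\subset\tilde M$ satisfied a strictly super-Euclidean isoperimetric inequality $\vol_n(S)=o\bigl(\vol_{n-1}(\partial S)^{n/(n-1)}\bigr)$, the volume with multiplicity of $\tilde f(B_R)$ would be forced to be $o(R^n)$, contradicting positive asymptotic degree. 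Hence $\tilde M$ has at worst the Euclidean isoperimetric profile, and a calculation of Varopoulos relating the isoperimetric profile of a cocompact cover to the growth of the deck group yields the $O(R^n)$ bound; Gromov's polynomial growth theorem then gives virtual nilpotency. The quasiregular case is handled by establishing the same isoperimetric statement for images of balls under QR maps, which is more work but classical.

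Your $n$-parabolicity route is a legitimate alternative for the quasiregular case and is close in spirit to some of Varopoulos' original arguments. But as you yourself flag, the analytic machinery (Reshetnyak, $n$-harmonic composition, capacity estimates) has no off-the-shelf Lipschitz analog, and the ``integral-geometric bookkeeping'' you invoke for the elliptic case is a promissory note rather than an argument. The isoperimetric viewpoint sidesteps this entirely: the Lipschitz bound controls boundary volume directly, with no capacity or potential theory needed. So rather than the elliptic case being the obstacle, it is the two-line case once one changes framework.
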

The argument uses an isoperimetric inequality.  The elliptic case is easy to see.  Suppose that $f:\RR^n \to M$ is a Lipschitz map; it lifts to a map $\tilde f:\RR^n \to \tilde M$, where $\tilde M$ is the universal cover.  Then for every $R$, the volume of $\tilde f(\partial B_R(\RR^n))$ with multiplicity is $O(R^{n-1})$.  If sets $S \subset \tilde M$ satisfy
\[\vol_n(S)=o(\vol_{n-1}(\partial S)^{n/(n-1)}),\]
then the volume with multiplicity of $\tilde f(B_R(\RR^n))$ must be $o(R^n)$ and $f$ cannot be a wrapping map, so $M$ cannot be elliptic.  The same statement about isoperimetric inequalities can be shown for quasiregularly elliptic manifolds.  The statement about growth follows from a calculation due to Varopoulos.

The other main obstruction to ellipticity and quasiregular ellipticity is the following:
\begin{thm}[{Berdnikov--Guth--Manin \cite[Theorem 2.3]{BeGM}, Heikkil\"a--Pankka \cite{HP}}] \label{thm:coh}
  Let $M$ be an elliptic or quasiregularly elliptic closed $n$-manifold.  Then there is an injective ring homomorphism $H^*(M;\RR) \to \Lambda^* \RR^n$.
\end{thm}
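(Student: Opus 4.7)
The plan is to construct the desired homomorphism $\Phi\colon H^*(M;\RR)\to\Lambda^*\RR^n$ directly from the defining map $f\colon\RR^n\to M$ by asymptotic averaging of pullback forms, and then to deduce injectivity from Poincar\'e duality on $M$.

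Consider first the elliptic case and fix a $1$-Lipschitz wrapping map $f$ of positive asymptotic degree. For a closed form $\omega\in\Omega^k(M)$, the pullback $f^*\omega$ has uniformly bounded coefficients on $\RR^n$. Given a Banach limit $L\colon\ell^\infty(\RR_{>0})\to\RR$ and a constant $(n-k)$-form $\phi\in\Lambda^{n-k}\RR^n$, I would define $\Phi[\omega]\in\Lambda^k\RR^n$ by requiring
\[
\Phi[\omega]\wedge\phi \;=\; L\!\left(R\mapsto \frac{1}{\vol B_R}\int_{B_R}f^*\omega\wedge\phi\right)
\]
for every such $\phi$, with both sides identified with real numbers via the standard orientation. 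This is well defined on cohomology by Stokes: if $\omega=d\eta$ on $M$ then $f^*\eta$ is bounded, $\phi$ is closed, and $\int_{B_R}d(f^*\eta)\wedge\phi=\pm\int_{\partial B_R}f^*\eta\wedge\phi=O(R^{n-1})$, which is killed after dividing by $R^n$. The positive asymptotic degree hypothesis says precisely that $\Phi[d\vol_M]\ne 0$.

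Injectivity then follows from Poincar\'e duality: for every nonzero $[\omega]\in H^k(M;\RR)$ there is $[\eta]\in H^{n-k}(M;\RR)$ with $[\omega]\smile[\eta]$ a nonzero multiple of the top class, so \emph{if $\Phi$ is multiplicative} then $\Phi[\omega]\wedge\Phi[\eta]\ne 0$ and hence $\Phi[\omega]\ne 0$. Establishing multiplicativity is the main obstacle: Banach-limit averages do not commute with pointwise wedge products, so one cannot simply push $L$ through $f^*(\omega\wedge\eta)=f^*\omega\wedge f^*\eta$. My approach would be to decompose $f^*\omega=\Phi[\omega]+(\text{oscillation})$ in a weak sense and show that the oscillation is asymptotically annihilated by averaged wedging with $f^*\eta$ or with any constant form, essentially because constant forms pair trivially in the limit with an exact error. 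This is the point at which I would study Berdnikov--Guth--Manin most closely.

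For the quasiregularly elliptic case, a quasiregular $f$ no longer has bounded derivative, but the distortion inequality together with Reshetnyak's theorem supplies $L^{n/k}_{\mathrm{loc}}$ control on $f^*\omega$, which is exactly borderline for the $R^{-n}$ normalization above. The Stokes step becomes an $L^{n/k}$--$L^{n/(n-k)}$ H\"older pairing with $\phi$, and the oscillation-cancellation required for multiplicativity would be handled by the $\mathcal{A}$-harmonic machinery of Heikkil\"a--Pankka. The overall architecture of the proof—linear map by asymptotic averaging, then injectivity via Poincar\'e duality through multiplicativity—remains the same in both settings.
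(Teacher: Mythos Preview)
Your overall architecture—construct a linear map, then use Poincar\'e duality plus multiplicativity to deduce injectivity—is exactly right and matches the paper. The gap is at the multiplicativity step, which you correctly flag as the crux but do not actually resolve. Your proposed fix, that the oscillation $f^*\omega-\Phi[\omega]$ is ``exact with bounded primitive'' so that the Stokes argument kills the cross terms, does not hold in general: this form is closed and hence exact on $\RR^n$, but a bounded closed $k$-form only has a primitive growing at most linearly, and you cannot do better without further input. Consequently the term $\int_{B_R}O_\omega\wedge O_\eta\wedge\phi$ is only $O(R^n)$ after integrating by parts, and there is no mechanism forcing its Banach limit to vanish. A single averaged number per class is simply not enough structure to carry the ring.

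The paper avoids this by \emph{not} averaging. In the elliptic case one rescales instead: set $f_t(x)=f(tx)$ and $\omega_{i,t}=t^{-a_i}f_t^*\omega_i$, which are uniformly bounded flat forms on $\RR^n$. Arzel\`a--Ascoli extracts a subsequence with $\omega_{i,t_k}\to\omega_{i,\infty}$ in the flat norm, and flat convergence preserves wedge products (Whitney), so $w_i\mapsto\omega_{i,\infty}$ is automatically multiplicative as a map into $\Omega^*_\flat(\RR^n)$. The relations are handled by choosing primitives $\alpha_j$ with $d\alpha_j=p_j(\omega_1,\ldots,\omega_r)$ on $M$; then $\lVert t^{-b_j}f_t^*\alpha_j\rVert_\infty=O(t^{-1})\to 0$, so each relation holds exactly in the limit. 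This gives a ring homomorphism $H^*(M;\RR)\to\Omega^*_\flat(\RR^n)$, nonzero on the fundamental class by the asymptotic-degree hypothesis; evaluating at a generic point yields the embedding into $\Lambda^*\RR^n$. The quasiregular case follows the same template with weak $L^{n/k}$ limits replacing flat limits and with the normalization $A(B_k)^{-k/n}$ replacing $t^{-k}$; compatibility of wedge products under weak convergence is then a separate compensated-compactness lemma. In short, the paper's limit produces a \emph{form} per class rather than a number, and that extra structure is precisely what transports the cup product.
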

In the elliptic case, this is a theorem of Berdnikov, Guth and Manin, who also show a quantitative refinement \cite[Theorem 2.3]{BeGM}.  The quasiregular result is a refinement of Prywes' main theorem in \cite{Pry} first shown by Heikkil\"a and Pankka \cite{HP}.  We give relatively concise and similar proofs of both statements by combining ideas from \cite{BeM} and \cite{Pry}.  In both cases, the main idea is to take pullbacks of arbitrary differential forms generating the cohomology ring and take a scaling limit.  While the forms may not obey the same relations as the cohomology classes, their scaling limits do, giving a ring homomorphism from $H^*(M;\RR)$ to an algebra of forms on $\RR^n$.  Evaluating this ring homomorphism at a point gives the desired result.

Most known obstructions to ellipticity and quasiregular ellipticity, including some obstructions to ellipticity derived by Gromov in \cite[\S2.E]{GrMS}, follow from Theorem \ref{thm:pi1} and Theorem \ref{thm:coh}.  The most far-reaching such consequence is the following, which answers a question of Gromov \cite[Question 2.44]{GrMS}:
\begin{thm} \label{cor:pi1}
  If $M$ is an elliptic or quasiregularly elliptic closed $n$-manifold, then $\pi_1(M)$ is virtually abelian.
\end{thm}
We show this by studying the cohomology of nilmanifolds.  By Theorem \ref{thm:pi1}, $\pi_1(M)$ is virtually nilpotent, and by Theorem \ref{thm:coh}, $H^*(M)$ must contain $H^*(\pi_1(M)^{\text{ab}})$ as a subalgebra.  On the other hand, any noncommutativity kills certain cup products between elements of $H^1(M)$.  We show this using Nomizu's theorem, which relates the cohomology of nilpotent groups to Lie algebra cohomology, but one could instead use the Serre spectral sequence.

The analogous result for BLD-elliptic manifolds was obtained using different methods by Le Donne and Pankka \cite{LDPa}.

We give several further corollaries in \S\ref{S:cors}, of which the easiest to state are the following.  Let $M$ be an elliptic or quasiregularly elliptic $n$-manifold.  Then:
\begin{itemize}
\item The fundamental group $\pi_1(M)$ cannot have rank $n-1$.
\item If $\pi_1(M)$ is infinite, then the Euler characteristic $\chi(M)=0$.
\end{itemize}
The latter is also shown in \cite{PG}; the authors use a different method (studying the growth of Betti numbers of finite covers), but their result is likewise a purely topological corollary of Theorem \ref{thm:pi1} and known results about the cohomology ring.

In fact, as far as we know, the only obstruction to ellipticity or quasiregular ellipticity of closed manifolds which \emph{does not} follow from Theorems \ref{thm:pi1} and \ref{thm:coh} is the following:
\begin{thm}[{Gromov \cite[Theorem 2.45]{GrMS}}] \label{thm:TsharpS}
  Suppose that $M$ is an elliptic closed $n$-manifold whose fundamental group is virtually $\ZZ^n$.  Then $M$ is aspherical (and therefore finitely covered by a manifold homeomorphic to the torus).
\end{thm}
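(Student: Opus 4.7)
By passing to a finite cover we may assume $\pi_1(M) = \ZZ^n$, since ellipticity is preserved under covers. Let $\phi: M \to T^n$ be the classifying map of the fundamental group. The plan is to show that $\phi$ is a homotopy equivalence, which for this $\pi_1$ is equivalent to $M$ being aspherical.

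The first step is to show that $\phi^*$ is an isomorphism on rational cohomology, using Theorem~\ref{thm:coh}. The injective ring homomorphism $\alpha: H^*(M;\RR) \hookrightarrow \Lambda^*\RR^n$ restricts to an injection of $n$-dimensional spaces on $H^1$ (since $H^1(M;\RR) \cong (\pi_1(M)^{\mathrm{ab}} \otimes \RR)^* = \RR^n$), hence an isomorphism there. Because $\Lambda^*\RR^n$ is generated in degree one and the image of $\alpha$ is closed under cup products, $\alpha$ is an isomorphism of graded rings. In particular the top cup product $\bigwedge^n H^1(M;\RR) \to H^n(M;\RR)$ is nonzero, forcing $\deg \phi \ne 0$; since both $H^*(T^n;\QQ)$ and $H^*(M;\QQ)$ are generated in degree one and $\phi^*$ is already an isomorphism on $H^1$, it is an isomorphism throughout.

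Next, convert $\phi$ into a fibration with homotopy fiber $\tilde M$ and apply the Serre spectral sequence $E_2^{p,q} = H^p(T^n; \mathcal{H}^q(\tilde M;\QQ)) \Rightarrow H^{p+q}(M;\QQ)$. Because $\phi^*$ is an isomorphism, the edge homomorphism exhausts $H^*(M;\QQ)$, so $E_\infty^{p,q} = 0$ for all $q > 0$. Assuming (as a technical point needing verification) that the $\ZZ^n$-monodromy on $H^*(\tilde M;\QQ)$ is trivial, this forces $H^q(\tilde M;\QQ) = 0$ for all $q>0$, so by rational Hurewicz $\pi_k(M)\otimes\QQ = 0$ for all $k \geq 2$.

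The main obstacle is upgrading this to integral asphericity: rational acyclicity of $\tilde M$ together with $\pi_1(M)=\ZZ^n$ is not by itself sufficient to conclude $\tilde M$ is contractible. Here the geometric input of the wrap map $\tilde f: \RR^n \to \tilde M$ must be used more substantially---for example by strengthening Theorem~\ref{thm:coh} to integral coefficients in this special case, or by using the Euclidean-type isoperimetric inequality from the proof of Theorem~\ref{thm:pi1} to rule out torsion classes at any scale and deform $\tilde f$ into a proper homotopy equivalence. Once $\tilde M$ is contractible, $M \simeq T^n$, and the topological rigidity of the torus yields that some finite cover of $M$ is homeomorphic to $T^n$.
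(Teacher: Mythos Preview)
The paper does not give its own proof of this theorem; it is stated as a result of Gromov. More importantly, the paper explicitly flags Theorem~\ref{thm:TsharpS} as ``the only obstruction to ellipticity or quasiregular ellipticity of closed manifolds which \emph{does not} follow from Theorems~\ref{thm:pi1} and~\ref{thm:coh}.'' Your approach is built entirely on Theorem~\ref{thm:coh}, so the gap you identify at the end---upgrading rational acyclicity of $\tilde M$ to genuine asphericity---is not a technical detail but precisely the content of the theorem that lies beyond these tools.

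Concretely: the paper points out that for $M = T^n \mathbin{\#} \Sigma$ with $\Sigma$ a nontrivial simply connected rational homology sphere, the projection $M \to T^n$ is already an isomorphism on rational cohomology. Your first two steps apply verbatim to this $M$ and correctly yield that $\tilde M$ is rationally acyclic---yet $\tilde M$ is not contractible, and the theorem asserts that this $M$ is \emph{not} elliptic. So no argument using only rational cohomological information can work here. The paper also notes that Luisto and Pankka's argument for the quasiregular analogue foundered on exactly this point: it produced only rational acyclicity of the universal cover. Gromov's actual argument must exploit the lifted wrapping map $\tilde f:\RR^n \to \tilde M$ directly, using that $\tilde M$ (being quasi-isometric to $\ZZ^n$) has volume growth \emph{exactly} $R^n$ rather than merely $O(R^n)$. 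Your final paragraph gestures in this direction, but the suggestions there (``strengthening Theorem~\ref{thm:coh} to integral coefficients,'' ``using the isoperimetric inequality to rule out torsion classes'') are not a proof, and the first of them is simply false in general.
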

The corresponding statement for quasiregularly elliptic manifolds is believed to be true, and an argument was given by Luisto and Pankka \cite{LuPa}.  However, after the first version of this article was posted, Pankka noticed that their argument only showed that $M$ must have a \emph{rationally} acyclic universal cover.

Theorem \ref{thm:TsharpS} is particularly interesting because unlike Theorem \ref{thm:coh}, it gives torsion information.  For example, it shows that $T^n \mathbin{\#} \Sigma$ is not elliptic, where $\Sigma$ is a nontrivial simply connected rational homology sphere.\footnote{Such a $\Sigma$ can be constructed as follows.  By attaching a $3$-cell to $S^2$ along a map of degree $2$, one creates a CW complex $X$ whose only nonzero homology group is $H_2(X) \cong \ZZ/2\ZZ$.  By embedding $X$ in $\RR^7$ and taking the boundary of a neighborhood of $X$, we obtain a $6$-manifold whose homology is the same as that of $X$ in low dimensions.  By Poincar\'e duality, this manifold is a rational homology sphere.}  The projection map from $T^n \mathbin{\#} \Sigma$ to $T^n$ is an isomorphism on rational cohomology.\footnote{The referee pointed out Peltonen's dissertation \cite{Pelt}, which produces obstructions to quasiregular ellipticity of manifolds $T^n \mathbin{\#} M$, where $M$ is again assumed to have a nontrivial rational cohomology class in some intermediate dimension.}

In contrast, for a large class of spaces, we prove the following result:
\begin{thm} \label{thm:Q-inv}
  If $M$ is a simply connected (or more generally nilpotent) closed manifold, then whether it is elliptic depends only on its rational homotopy type.
\end{thm}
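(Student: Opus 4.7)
The plan is a two-step reduction: first I would show that ellipticity is transported along continuous maps of nonzero degree between closed oriented $n$-manifolds, and then I would use standard rational homotopy theory to produce such a map between any two rationally equivalent nilpotent closed $n$-manifolds.

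\emph{Step 1 (composition lemma).}
Suppose $f\colon \RR^n \to M$ is a wrapping map and $g\colon M \to M'$ is a smooth map of nonzero degree $d$ between closed oriented Riemannian $n$-manifolds. Then $g\circ f$ is a wrapping map. It is automatically Lipschitz, and because the top cohomology is one-dimensional one can write $g^*\,d\vol_{M'}=d\cdot d\vol_M + d\eta$ for some smooth $(n{-}1)$-form $\eta$ on $M$, so by Stokes,
\[
\int_{B_R}(g\circ f)^*d\vol_{M'}
\;=\; d\int_{B_R}f^*d\vol_M + \int_{\partial B_R}f^*\eta.
\]
The boundary integral is $O(R^{n-1})$ because $f$ is Lipschitz and $\eta$ is bounded, while the main term grows like $R^n$; after reversing orientations if necessary, $g\circ f$ has positive asymptotic degree.

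\emph{Step 2 (realization of the rational equivalence).}
It suffices to show: any two rationally equivalent nilpotent closed $n$-manifolds $M, M'$ admit continuous maps $M\to M'$ and $M'\to M$ of nonzero degree. Fix a rational equivalence $\varphi\colon M_\QQ\to M'_\QQ$, which is an isomorphism on top rational cohomology. The composite $M\to M_\QQ\xrightarrow{\varphi}M'_\QQ$ must be lifted along $M'\to M'_\QQ$. Since $M$ is a finite CW complex and the homotopy fiber of a rationalization has torsion homotopy groups in every degree, the obstructions to lifting live in torsion cohomology groups of $M$; they can be cleared after replacing $\varphi$ by $N\varphi$ for a sufficiently divisible integer $N$, exploiting that the rational Postnikov tower of $M'$ supports multiplication by integers up to homotopy. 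The resulting lift $g\colon M \to M'$ satisfies $g^*[d\vol_{M'}]=Nc\,[d\vol_M]\neq 0$ in cohomology, hence has nonzero degree. A symmetric argument produces a map the other way.

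\emph{Main obstacle.}
The substantive difficulty is Step 2 when $\pi_1(M)$ is a nontrivial nilpotent group: one must invoke the extension of Sullivan's rationalization theory to nilpotent spaces (due to Hilton--Mislin--Roitberg) and check that the integer-multiplication trick is compatible with the $\pi_1$-action on higher homotopy groups throughout the Postnikov tower. In the simply connected case this is a classical consequence of Sullivan's theorems and the argument is essentially routine.
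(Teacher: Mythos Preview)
Your approach is genuinely different from the paper's, and Step~1 is correct and elementary.  The problem is Step~2: the ``multiply by $N$'' maneuver does not work as you describe.  A self-map of $M'_\QQ$ acting as multiplication by $N^k$ on $H^k$ corresponds to a DGA endomorphism of the minimal model with that property; as the paper itself notes when discussing formality, such an endomorphism exists \emph{if and only if} $M'$ is formal.  For a non-formal $M'$ (these exist from dimension~$7$ on, and the theorem is meant to cover them) there is simply no map ``$N\varphi$'' to pass to.  Even when such a self-map does exist, the obstruction groups $H^{k+1}(M;\pi_k(F))$ for the fiber $F$ of $M'\to M'_\QQ$ typically contain divisible summands isomorphic to $(\QQ/\ZZ)^r$, so multiplying an obstruction class by a large integer need not annihilate it.  Thus your sketch does not establish the existence of a nonzero-degree map $M\to M'$, and I do not know a short argument that does so in the non-formal case; at best, this reduces the theorem to a separate nontrivial statement in rational homotopy theory that you have not proved.

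The paper avoids this difficulty entirely.  Rather than seeking a map $M\to N$, it works directly with the common minimal model $\mathcal A$ and the DGA map $f^*m_M:\mathcal A\to\Omega^*\RR^n$ induced by the given wrapping map $f:\RR^n\to M$.  The \emph{shadowing principle} of \cite{PCDF} then produces a bounded DGA homotopy $\Phi:\mathcal A\to\Omega^*(\RR^n\times[0,1])$ whose other end is $g^*m_N$ for some Lipschitz $g:\RR^n\to N$; a Stokes argument on $\Phi$ (your Step~1 computation, but with the form-level homotopy in place of a map $M\to N$) shows $g$ is a wrapping map.  So the paper trades your topological lifting problem for a quantitative DGA-homotopy statement that holds regardless of formality.
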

For example, if there is a map $M \to N$ which induces isomorphisms on all rational homology groups, and $M$ and $N$ are both nilpotent or finitely covered by a nilpotent space, then $M$ is elliptic if and only if $N$ is.

A \emph{nilpotent space} is a space whose fundamental group is nilpotent and acts nilpotently on its higher homotopy groups.  A prototypical example would be the mapping torus of a self-map of $(S^n)^r$ which induces a unipotent transformation on $H_n$.

In particular, the higher homotopy groups of a nilpotent space are all finitely generated.  On the other hand, for the manifold $T^n\mathbin{\#}\Sigma$ discussed above, if $k>1$ is minimal such that $H_k(\Sigma) \neq 0$, then (writing $\widetilde{M}$ for the universal cover of $M$)
\[\pi_k(T^n\mathbin{\#}\Sigma) \cong \pi_k(\widetilde{T^n\mathbin{\#}\Sigma}) \cong H_k(\widetilde{T^n\mathbin{\#}\Sigma}) \cong \ZZ^n[H_k(\Sigma)],\]
so it is not nilpotent.  In fact, though, the key ingredient of Theorem \ref{thm:TsharpS} seems to be that the fundamental group has full rank, since other similar non-nilpotent examples are elliptic:
\begin{thm} \label{thm:TxS}
    Let $X=(T^{n-k} \times S^k) \mathbin{\#} \Sigma$, where $2 \leq k \leq n-1$ and $\Sigma$ is a simply connected rational homology sphere.  Then $X$ is elliptic.
\end{thm}

\subsection{Construction of elliptic and quasiregularly elliptic manifolds}
In general, wrapping maps are more flexible than quasiregular maps, and therefore it is easier to show that a manifold is Gromov-elliptic than quasiregularly elliptic.  For example, this follows directly from the definition:
\begin{prop}
    Let $M$ and $M'$ be elliptic manifolds.
    \begin{itemize}
    \item The product $M \times M'$ is elliptic.
    \item If $M$ is closed, $N$ is a closed manifold of the same dimension, and there is a map of positive degree $M \to N$, then $N$ is also elliptic.
    \end{itemize}
\end{prop}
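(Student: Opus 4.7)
The plan is to use the natural construction in each bullet. For the product, I would take $F = f \times f' : \RR^{n+n'} \to M \times M'$ where $f, f'$ realize the ellipticity of $M, M'$. Equipping $\RR^{n+n'}$ with the $\ell^\infty$ product metric (bi-Lipschitz equivalent to the Euclidean one), $F$ inherits a Lipschitz bound. Since $d\vol_{M\times M'} = d\vol_M \wedge d\vol_{M'}$, Fubini on a product cube yields
\[
\int_{[-R,R]^{n+n'}} F^* d\vol_{M\times M'} = \left(\int_{[-R,R]^n} f^* d\vol_M\right)\left(\int_{[-R,R]^{n'}} (f')^* d\vol_{M'}\right),
\]
and each factor has the expected $R^n$ (resp.\ $R^{n'}$) growth along some sequence of radii, by the ellipticity hypothesis. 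Passing between cubes and balls both in each factor and in $\RR^{n+n'}$ is routine given the uniform bound $|F^* d\vol|_\infty \leq L^{n+n'}$ on the pullback.

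For the second claim, compose $g \circ f : \RR^n \to N$, which is Lipschitz. On the closed manifold $M$, the top form $g^* d\vol_N$ has $\int_M g^* d\vol_N = d \cdot \vol(N)$ and therefore represents the cohomology class $\frac{d\cdot\vol(N)}{\vol(M)}[d\vol_M] \in H^n(M;\RR)$. Writing $g^* d\vol_N = \frac{d\cdot\vol(N)}{\vol(M)} d\vol_M + d\alpha$ for some $(n-1)$-form $\alpha$ on $M$ and applying Stokes' theorem on $B_R$,
\[
\int_{B_R} (g \circ f)^* d\vol_N = \frac{d\cdot\vol(N)}{\vol(M)}\int_{B_R} f^* d\vol_M + \int_{\partial B_R} f^* \alpha.
\]
The boundary integral is $O(R^{n-1})$ by the Lipschitz bound on $f$ and the uniform bound on $\alpha$, so the leading term dominates along any sequence witnessing positive asymptotic degree of $f$, and $g \circ f$ inherits this property.

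The main obstacle lies in the product: the positive-$\limsup$ conditions for $f$ and $f'$ may be realized along quite different sequences of radii, so we must exhibit a common scale at which both cube integrals are simultaneously large. Since each integral is Lipschitz in $R$ with local constant $\lesssim R^{n-1}$, the set of good radii for each factor fattens to a multiplicative neighborhood of any sequence along which the $\limsup$ is realized, and an overlap argument then delivers the required common scales. The Stokes-type calculation in the second bullet has no analogous subtlety.
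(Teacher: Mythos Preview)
The paper offers no proof beyond the remark that the proposition ``follows directly from the definition.''  Your argument for the second bullet---compose with $g$, write $g^*d\vol_N$ as a multiple of $d\vol_M$ plus an exact correction, and control the latter via Stokes---is correct and is exactly the kind of computation the paper is gesturing at.

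For the first bullet, you have put your finger on a real obstacle, but your proposed resolution does not work.  The fattening you describe only enlarges each good radius to an interval of \emph{fixed} multiplicative width (depending on $\epsi$, $n$, and the Lipschitz constant), and two such families of intervals need not overlap.  Concretely, take $M=M'=S^1$ and $1$-Lipschitz maps $f,f':\RR\to S^1$ with $f^*d\theta=\mathbf 1_E\,dx$ and $(f')^*d\theta=\mathbf 1_{E'}\,dx$, where
\[
E=\bigcup_{k\ge 1}\pm\bigl[2^{2^k},\,2^{2^k+1}\bigr],\qquad
E'=\bigcup_{k\ge 1}\pm\bigl[2^{3\cdot 2^{k-1}},\,2^{3\cdot 2^{k-1}+1}\bigr].
\]
Each of $f,f'$ has positive asymptotic degree (the $\limsup$ is at least $1$), with good radii near $2^{2^k}$ and $2^{3\cdot 2^{k-1}}$ respectively.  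But a short case check shows that $\bigl(\int_0^R \mathbf 1_E\bigr)\bigl(\int_0^R \mathbf 1_{E'}\bigr)=o(R^2)$ for every $R\to\infty$: whichever family of ``on'' intervals $R$ is near, the other integral lags behind by a factor at least $2^{2^{k-2}}$.  Hence $f\times f':\RR^2\to T^2$ has zero asymptotic degree, and no overlap argument of the type you sketch can succeed.

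To close the gap one must do more than take the product of given wrapping maps.  One clean route is to first show that an elliptic $M$ admits a wrapping map with $\liminf_R R^{-n}\int_{B_R}f^*d\vol_M>0$: restrict a given $f$ to a single good ball $B_{R_0}$, use that $f|_{\partial B_{R_0}}$ is nullhomotopic (it bounds $f|_{B_{R_0}}$) to attach a Lipschitz collar making the map constant on the boundary of a slightly larger ball with integral still $\gtrsim\epsi R_0^n$, and then tile $\RR^n$ periodically by this cell.  Once both factors have the $\liminf$ property, the Fubini computation you wrote down goes through at every scale.
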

A general construction is given in \cite{BeGM} for formal manifolds.  A simply connected space is \emph{formal} in the sense of Sullivan \cite[\S12]{SulLong} if, informally speaking, its rational homotopy type is the simplest one with its rational cohomology ring.  An equivalent informal condition is that there are no nontrivial higher-order rational cohomology operations: the only relations between cohomology classes are given by the cup product.  For finite complexes, another useful equivalent condition is as follows: a simply connected finite CW complex $X$ is formal if and only if there is an integer $p>1$ and a map $f_p:X \to X$ whose induced map on $H^k(X;\QQ)$, for every $k$, is multiplication by $p^k$.

Examples of formal manifolds include all simply connected symmetric spaces, such as spheres, complex projective spaces, and Grassmannians.  Products and connected sums of formal manifolds are also formal.  The simplest and lowest-dimensional example of a non-formal simply connected manifold is the total space $M$ of an $S^3$-bundle over $S^2 \times S^2$ \cite[p.~94]{FOT}, further discussed in \S\ref{S:exs}.
\begin{thm}[{\cite[Theorem C$'$]{BeGM}}]
    Let $M$ be a formal simply connected closed $n$-manifold.  Then $M$ is elliptic if and only if there is an embedding $H^*(M;\RR) \hookrightarrow \Lambda^*\RR^n$.
\end{thm}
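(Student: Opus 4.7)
The forward direction is already covered: if $M$ is elliptic, then Theorem \ref{thm:coh} provides an injective ring homomorphism $H^*(M;\RR) \hookrightarrow \Lambda^*\RR^n$, so the nontrivial content is the converse.

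For the converse, suppose we are given an embedding $\phi:H^*(M;\RR) \hookrightarrow \Lambda^*\RR^n$ of graded algebras. The plan is to upgrade $\phi$ to a Lipschitz map $f:\RR^n\to M$ of positive asymptotic degree by combining formality with the shadowing principle already cited in the proof of Theorem \ref{thm:Q-inv}. First, I identify $\Lambda^*\RR^n$ with the subalgebra of constant-coefficient differential forms in $\Omega^*\RR^n$; these forms automatically have uniformly bounded operator norm on $\RR^n$. Next, since $M$ is simply connected and formal, the cohomology ring, viewed as a DGA with zero differential, is itself a model for $M$: there is a quasi-isomorphism $\mathcal A\to (H^*(M;\RR),0)$ from the Sullivan minimal algebra $\mathcal A$ of $M$. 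Composing with $\phi$ yields a DGA homomorphism $\Psi:\mathcal A\to \Lambda^*\RR^n\subset\Omega^*\RR^n$ taking values in constant-coefficient forms.

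Now I apply the shadowing principle \cite[Theorem 4.1]{PCDF}: the DGA homomorphism $\Psi$, together with the minimal model $m_M:\mathcal A\to\Omega^*M$, can be geometrically realized, meaning there is a Lipschitz map $f:\RR^n\to M$ and a DGA homomorphism $\Phi:\mathcal A\to\Omega^*(\RR^n\times[0,1])$ with forms of bounded operator norm such that $\Phi|_{t=0}=\Psi$ and $\Phi|_{t=1}=f^*m_M$. (This step is where formality is essential, since it lets us start the realization from a homomorphism landing in constant, and in particular bounded, forms on $\RR^n$.)

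To finish, I check that $f$ is a wrapping map. Because $\phi$ is injective and $H^n(M;\RR)\cong\RR$ is one-dimensional, $\Psi$ sends the fundamental cohomology class to a nonzero scalar multiple $c\,dx_1\wedge\cdots\wedge dx_n$ of the Euclidean volume form. Then Stokes' theorem applied to $\Phi$ on the cylinder $B_R\times[0,1]$ gives
\[\int_{B_R(\RR^n)} f^*d\vol_M \;-\; c\cdot\vol(B_R) \;=\; \int_{\partial B_R\times[0,1]}\Phi,\]
and the right-hand side is $O(R^{n-1})$ because $\Phi$ has uniformly bounded norm. Hence $R^{-n}\int_{B_R} f^*d\vol_M\to c\cdot\vol(B_1)>0$, so $f$ has positive asymptotic degree and $M$ is elliptic.

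The main obstacle is the invocation of the shadowing principle; the rest is a bookkeeping exercise. Formality is used precisely to produce an initial DGA homomorphism whose forms are constant on $\RR^n$, which is what guarantees the uniform bounds required to run the principle and to control the Stokes error term.
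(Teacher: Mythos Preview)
The paper does not give its own proof of this theorem; it is quoted as \cite[Theorem C$'$]{BeGM}. Your argument is correct and is essentially the one in \cite{BeGM}: formality supplies a DGA homomorphism from the minimal model into constant forms on $\RR^n$, and the shadowing principle \cite[Theorem~4.1]{PCDF} realizes it by a Lipschitz map whose asymptotic degree is pinned down by the Stokes estimate you wrote. This is exactly the template the paper itself uses in the proof of Theorem~\ref{thm:Q-inv}, so your proposal is in line with both the original source and the paper's methods.

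Two cosmetic points worth tightening. First, the constant $c$ need not be positive a priori; if it is negative, precompose $\phi$ with an orientation-reversing linear automorphism of $\RR^n$. Second, $\Phi|_{t=1}=f^*m_M$, and $m_M$ applied to a cocycle representing the fundamental class is only \emph{cohomologous} to $d\vol_M$, not equal to it; the difference is $d\beta$ for some bounded $\beta\in\Omega^{n-1}(M)$, so $\int_{B_R} f^*m_M(v)$ and $\int_{B_R} f^*d\vol_M$ differ by another $O(R^{n-1})$ boundary term. Neither point affects the conclusion.
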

In particular, since every simply connected manifold of dimension $\leq 6$ is formal, this gives a complete classification for such manifolds.

We note that if there is an embedding $H^*(M;\QQ) \hookrightarrow \Lambda^*\QQ^n$, then there is a map of positive degree from $T^n$ to $M$, and therefore $M$ is trivially elliptic.  However, a real embedding as above need not be rationalizable; a 9-dimensional example manifold $M$ is given in \cite[Remark 5.2]{BeGM} and we review this example in \S\ref{S:exs}.  A wrapping map of this $M$ must therefore be in some sense aperiodic (but may be almost periodic).

In contrast, we know few examples of quasiregularly elliptic manifolds outside low dimensions.  What's more, all the examples we know factor through a torus; even understanding whether the specific $9$-manifold alluded to above is quasiregularly elliptic would be a breakthrough.  In low dimensions, a complete classification is available and coincides with that of elliptic manifolds:
\begin{itemize}
\item In dimension $2$, only $S^2$ and $T^2$ are quasiregularly elliptic.  (In fact, every quasiregular map from $\CC$ to a Riemann surface or a domain in $\CC$ decomposes into a homeomorphism followed by a holomorphic map, a fact known as \emph{Sto\"ilow factorization}.)
\item In dimension $3$, only $S^3$, $S^2 \times S^1$, $T^3$, and their finite quotients are quasiregularly elliptic due to work of Jormakka \cite{Jor} and the geometrization theorem.
\item In dimension $4$, Heikkil\"a and Pankka \cite{HP} give a classification up to homeomorphism in the simply connected case: all those manifolds which are not excluded by Theorem \ref{thm:coh} are homeomorphic to quasiregularly elliptic ones by work of Piergallini and Zuddas \cite{PZ} because they admit a branched covering by $T^4$.\footnote{Earlier, Rickman had constructed such a branched covering for $(S^2 \times S^2) \mathbin{\#} (S^2 \times S^2)$ \cite{RickSum}.}  Specifically, these are
  \[\#^r (S^2 \times S^2), \qquad \#^s \CC P^2 \#^t \overline{\CC P^2}, \qquad 0 \leq r,s,t \leq 3,\]
  where, by convention, the connected sum of nothing is $S^4$.  We extend this topological classification to all $4$-manifolds in Corollary \ref{cor:4D}: up to finite quotients, the only additional examples are $T^4$, $T^2 \times S^2$, and $S^1 \times S^3$.
    
  Note that this is not a complete classification of which $4$-manifolds are quasiregularly elliptic, since quasiregular ellipticity is only known to be invariant up to PL homeomorphism.  Some of the manifolds above are known to admit exotic smooth (and therefore PL) structures \cite{AP1,AP2}; it is not known whether any such exotic $4$-manifolds are quasiregularly elliptic.\footnote{Recent progress in resolving this has been made in a preprint by Bais, Piergallini and Zuddas \cite{BPZ}.  They show that such manifolds admit a branched covering by $T^4$ if they have a handle decomposition without $1$- or $3$-handles.  Whether such a decomposition exists for all closed simply connected $4$-manifolds is an open question \cite[Problem 4.18]{Kirby}, though counterexamples for compact manifolds with boundary were given by Hayden and Piccirillo \cite[Cor.~A.2]{HayPic}.}
\end{itemize}
In higher dimensions, it is known that $\CC P^n$ is quasiregularly elliptic, as shown by Luisto and Prywes \cite[Theorem 1.3]{LuPr}.

\subsection{Open manifolds}
Thus far we have discussed Gromov- and quasiregular ellipticity for closed manifolds.  This restriction is natural because it makes the question topological: for closed manifolds, ellipticity is a homotopy invariant,\footnote{This follows from the fact that any homotopy equivalence between compact Riemannian manifolds can be made Lipschitz.} and quasiregular ellipticity is invariant at least under PL homeomorphism \cite[Proposition B.1]{HP}.  On the other hand, for open manifolds both properties depend on the metric, and in different ways, as we discuss further in \S\ref{S:open-d2} and \ref{S:open}.

In prior works on quasiregular ellipticity such as \cite{PaRa,PaRaWu}, an open manifold is said to be quasiregularly elliptic if it admits a quasiregular map from $\RR^n$ with respect to \emph{some} Riemannian metric.  This forces quasiregular ellipticity to be a topological property.  Even with this definition, Theorem \ref{thm:pi1} extends to the open manifold case:
\begin{thm}[{Pankka--Rajala \cite[Theorem 1.3]{PaRa}}] \label{thm:pi1-open-QR}
  If $M$ is a quasiregularly elliptic (closed or open) Riemannian $n$-manifold, then the growth rate of any finitely generated subgroup of $\pi_1(M)$ is $O(R^n)$.
\end{thm}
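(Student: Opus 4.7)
The plan is to reduce to an argument parallel to the one sketched after Theorem \ref{thm:pi1}. Given a finitely generated subgroup $H \leq \pi_1(M)$, let $p \colon M_H \to M$ be the cover with $\pi_1(M_H) = H$. Because $\RR^n$ is simply connected, the quasiregular map $f \colon \RR^n \to M$ lifts to a quasiregular map $\tilde f \colon \RR^n \to M_H$, so $M_H$ is itself quasiregularly elliptic. This reduces the problem to the following: if $X$ is a quasiregularly elliptic $n$-manifold with $\pi_1(X)$ finitely generated, then $\pi_1(X)$ has growth $O(R^n)$.

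Next I would lift once more to the universal cover, producing a quasiregular map $\hat f \colon \RR^n \to \tilde X$, and mimic the two-step structure of the closed case. The first step is analytic: using the quasiregular inequality $|D\hat f|^n \leq K J_{\hat f}$ on spheres in $\RR^n$, one deduces that the volume with multiplicity of $\hat f(\partial B_R(\RR^n))$ is $O(R^{n-1})$, which is precisely the input that rules out sets $S$ in a suitable region of $\tilde X$ with $\vol_n(S) = o(\vol_{n-1}(\partial S)^{n/(n-1)})$, and hence yields an isoperimetric inequality of the form $\vol_n(S) \leq C \vol_{n-1}(\partial S)^{n/(n-1)}$ there. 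The second step is Varopoulos's calculation, converting this isoperimetric inequality into a polynomial growth bound. Concretely, fix a finite generating set $S$ for $H$ represented by loops in $X$ of length at most $L$, and pick $\tilde q_0 \in \hat f(\RR^n)$; since deck transformations are isometries, the orbit $\{h \cdot \tilde q_0 : |h|_S \leq R\}$ sits in a ball of radius $CR$ about $\tilde q_0$, so if $\rho>0$ is smaller than the injectivity radius of the free $H$-action near $\tilde q_0$, the disjoint balls $h \cdot B_\rho(\tilde q_0)$ give $|H(R)| \cdot \vol(B_\rho) \leq \vol(B_{CR+\rho}(\tilde q_0)) = O(R^n)$.

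The main obstacle, and the main difference from the closed case, is that $\tilde X$ carries no cocompact $H$-action and $\hat f(\RR^n)$ is in general only a proper open subset of $\tilde X$ that is not $H$-invariant. The fix I would pursue is to work inside the $H$-saturation $U := \bigcup_{h \in H} h \cdot \hat f(\RR^n) \subset \tilde X$, which is open and $H$-invariant, and is the union of images of the quasiregular maps $h \circ \hat f$. The isoperimetric inequality from the first step transfers to $U$, the orbit $H \cdot \tilde q_0$ and the $\rho$-balls used in Varopoulos's counting lie inside $U$, and the argument above then closes. Checking that the isoperimetric inequality is indeed preserved under the passage to $U$ (and that one may choose $\tilde q_0$ and $\rho$ uniformly in $H$) is where I expect the bulk of the technical work to be.
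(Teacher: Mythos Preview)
The paper does not prove this statement; Theorem~\ref{thm:pi1-open-QR} is simply cited as a result of Pankka and Rajala, with no argument supplied. So there is no proof in the paper to compare your proposal against.

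Independently of that, your sketch has two substantive gaps. First, the assertion that the $(n-1)$-volume with multiplicity of $\hat f(\partial B_R)$ is $O(R^{n-1})$ does not follow from quasiregularity: unlike a $1$-Lipschitz map, a quasiregular map has no pointwise bound on $|D\hat f|$, and the inequality $|D\hat f|^n\le K J_{\hat f}$ only yields, after H\"older,
\[
\int_{\partial B_R}|D\hat f|^{n-1}\;\lesssim\; R^{(n-1)/n}\Bigl(\int_{\partial B_R}J_{\hat f}\Bigr)^{(n-1)/n},
\]
which involves the derivative $A'(R)$ of the counting function and is not $O(R^{n-1})$ in general. The genuine analytic input (as in Varopoulos and in Pankka--Rajala) is an averaging argument producing a sequence of good radii at which $A'(R)\lesssim A(R)/R$; only at those radii does the isoperimetric comparison go through. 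Second, the ``concrete'' description you give of Varopoulos's step is not Varopoulos's argument: the line $|H(R)|\cdot\vol(B_\rho)\le\vol\bigl(B_{CR+\rho}(\tilde q_0)\bigr)=O(R^n)$ assumes polynomial volume growth of metric balls in $\tilde X$, which is unjustified for the universal cover of an open manifold and is essentially the conclusion you are trying to reach. Varopoulos's calculation instead deduces polynomial growth of the group \emph{from} the isoperimetric inequality, with no appeal to metric balls in $\tilde X$. The $H$-invariance issue you flag at the end is real but secondary to these two points.
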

\noindent Note that $\pi_1(M)$ itself may be infinitely generated (e.g.~$\QQ$); in that case, this result still forces it to be nilpotent of step at most $\sim \sqrt n$.

A drawback of this topological definition is that, for example, it erases the difference between the Euclidean and hyperbolic planes, which are in different conformal classes.  In the two-dimensional case, the uniformization theorem allows one to fully classify which Riemannian manifolds are quasiregular, up to isometry.

Moreover, in this sense every open manifold would be elliptic, as it is easy to modify any Riemannian metric on an open manifold to make it elliptic.  This gives examples of elliptic open manifolds which cannot be quasiregularly elliptic.  The easiest such examples have infinite volume, but we also give a construction of finite-volume examples.  To prove a version of Theorem \ref{thm:pi1-open-QR} for ellipticity, we must impose an additional restriction:
\begin{thm} \label{thm:pi1-open}
    Suppose that $M$ is a Riemannian $n$-manifold of finite volume, and suppose that its volume form can be written as $\omega+d\alpha$ where $\omega$ is compactly supported and $\alpha$ is bounded.  If $M$ is elliptic, then every finitely generated subgroup of $\pi_1(M)$ has growth $O(R^n)$.
\end{thm}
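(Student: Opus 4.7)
The plan is to adapt the isoperimetric argument behind Theorem~\ref{thm:pi1} for closed manifolds to the open setting, using the decomposition $d\vol_M=\omega+d\alpha$ to confine the wrapping behavior to the compact set $K=\operatorname{supp}\omega$.

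First, I would apply Stokes's theorem: for a $1$-Lipschitz wrapping map $f\colon\RR^n\to M$,
\[
\int_{B_R} f^*\omega \;=\; \int_{B_R} f^*d\vol_M \;-\; \int_{\partial B_R} f^*\alpha.
\]
The first term is at least $cR^n$ for $R\gg 1$ by positive asymptotic degree, while the second is bounded in absolute value by $\|\alpha\|_\infty\cdot\vol_{n-1}(\partial B_R)=O(R^{n-1})$ because $f$ is $1$-Lipschitz. Hence $\int_{B_R} f^*\omega\ge c'R^n$ for large $R$. Given a finitely generated subgroup $H\le\pi_1(M)$, lift $f$ through the cover $q_H\colon M_H\to M$ with $\pi_1(M_H)=H$, obtaining $f_H\colon\RR^n\to M_H$. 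The pullbacks $\omega_H=q_H^*\omega$ and $\alpha_H=q_H^*\alpha$ satisfy the same sup-norm bounds as $\omega$ and $\alpha$, and $\omega_H$ is supported on $K_H=q_H^{-1}(K)$, a disjoint union of compact pieces. The same Stokes computation gives $\int_{B_R} f_H^*\omega_H\ge c'R^n$; combined with the pointwise bound $|f_H^*\omega_H|\le\|\omega\|_\infty\cdot|\det Df_H|$ and the area formula, this yields a lower bound of order $R^n$ on the volume-with-multiplicity of $f_H(B_R)\cap K_H$, while the $1$-Lipschitz property of $f_H$ gives an $(n-1)$-mass bound $O(R^{n-1})$ on the boundary current $(f_H)_*[\partial B_R]$.

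The remaining step is a Varopoulos-type isoperimetric argument: polynomial growth of degree $d$ for $H$ gives an isoperimetric inequality $|\partial S|\gtrsim|S|^{(d-1)/d}$ on the Cayley graph of $H$, and applied to a current of mass $\sim R^n$ with boundary mass $\sim R^{n-1}$ this forces $d\le n$; the super-polynomial case is handled analogously. The main obstacle is that Milnor--\v{S}varc does not apply directly in the open case, since $\pi_1(M)$ need not act cocompactly on $\tilde M$. To bridge this, I would enlarge $K$ to a compact $L\supset K$ containing loops representing a finite generating set of $H$; each component of the preimage $L_H=q_H^{-1}(L)\subset M_H$ which meets $f_H(\RR^n)$ is, by construction, an $H$-cover of the compact set $L$, so its intrinsic path metric is quasi-isometric to the Cayley graph of $H$ via Milnor--\v{S}varc applied to $L$ itself. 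The $R^n$ volume-with-multiplicity estimate on $K_H\subset L_H$, localized to such a component, then transfers to this discrete model, and the Varopoulos isoperimetric inequality there yields the desired growth bound $|B_R^H|=O(R^n)$.
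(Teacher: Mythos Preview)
Your overall strategy---Stokes with $d\vol_M=\omega+d\alpha$ to localize the wrapping to $K=\operatorname{supp}\omega$, then a Varopoulos isoperimetric argument on a compact enlargement $L\supset K$---matches the paper's, and your Stokes computation is exactly what the paper does. The gap is in the last step. You assert that a component of $L_H=q_H^{-1}(L)$ is quasi-isometric to the Cayley graph of $H$, invoking Milnor--\v{S}varc ``applied to $L$ itself.'' That is not what Milnor--\v{S}varc gives. If $G=\operatorname{im}(\pi_1(L)\to\pi_1(M))$ and you have arranged $H\le G$, the basepoint component of $L_H$ is the cover of $L$ with fundamental group $H$, namely $\tilde L/H$; Milnor--\v{S}varc says $\tilde L$ is QI to $G$, so this quotient is QI to the Schreier coset graph $H\backslash G$, not to $H$. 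The isoperimetric profile of $H\backslash G$ has no direct relation to the growth of $H$ (take $G$ free of rank $2$ and $H$ infinite cyclic). Lifting only to $M_H$ loses exactly the information you need.

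The fix, which is what the paper does, is to lift all the way to the universal cover. The paper also replaces your current-theoretic setup with a combinatorial one so that boundary terms are transparent: enlarge $K$ to a compact $K'$ with $\pi_1(K')\hookrightarrow\pi_1(M)$ containing the given finitely generated subgroup, triangulate both $B(R)$ and $K'$, and simplicially approximate $f$ on a subcomplex $X\subset B(R)$ sandwiched between $f^{-1}(K)$ and $f^{-1}(K')$. The lift $X\to\widetilde{K'}$ then pushes forward to a $0$-cochain on the dual graph $\Gamma$ of $\widetilde{K'}$---which \emph{is} QI to $\pi_1(K')$ by Milnor--\v{S}varc---of $\ell^1$-norm $\sim R^n$ and coboundary norm $\sim R^{n-1}$, and the Varopoulos inequality bounds the growth of $\pi_1(K')$, hence of $H\le\pi_1(K')$, by $O(R^n)$. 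If you repair your version by lifting to $\tilde M$ rather than $M_H$, you must also control the part of the boundary current arising where $\tilde f(B_R)$ exits the preimage of $L$, not just $\tilde f_*[\partial B_R]$; the paper's subcomplex $X$ and cochain formulation are precisely what absorb that term.
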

The hypothesis implies that any Lipschitz map from $\RR^n$ \emph{equidistributes}, that is, at large scales it ``visits'' any neighborhood roughly equally often---something which is always true of quasiregular maps.  It holds, for example, for open subsets of closed manifolds, as well as for manifolds of pinched negative curvature and nonpositively curved locally symmetric spaces.  The easiest non-examples are infinite strings of surfaces joined by thinner and thinner necks.  In future work it will be shown that it is equivalent to $M$ having a nonzero Cheeger constant.

Even then, there are manifolds of dimension $n \geq 3$ which have an elliptic metric satisfying this hypothesis, but no quasiregularly elliptic metric.  One such example is $S^n \setminus X$ where $X$ is a countable subset of the equatorial $S^{n-2}$.  We show in Theorem \ref{codim n-2} that $S^n \setminus S^{n-2}$ (with the round metric) is elliptic, and therefore so is $S^n \setminus X$.  On the other hand, a generalization of the Rickman--Picard theorem \cite{Rick} by Holopainen and Rickman \cite[Theorem 3.1]{HoR2} shows that $S^n \setminus X$ cannot be quasiregularly elliptic in the topological sense above.

Finally, we show in \S\ref{S:open-d2} that even for metrics on $\RR^2$, there is no implication between ellipticity and quasiregular ellipticity.  In particular, we give the only known example of a Riemannian manifold which is quasiregularly elliptic, but not elliptic.

\subsection{Are closed quasiregularly elliptic manifolds elliptic?}
This would be the case if every such manifold were BLD-elliptic, since every BLD map is both quasiregular and Lipschitz.  There is not much evidence for this conjecture; Gromov \cite[6.42]{GrMS} sketches a proof, further discussed by Bonk and Heinonen \cite[\S2]{BH}, that if there is a quasiregular map $\RR^n \to M$, then in fact there is a H\"older quasiregular map\footnote{The case $M=S^n$ was done by Miniowitz \cite{Mini}.}, but it's not clear how one would upgrade this to a Lipschitz quasiregular map.

In general, a quasiregular map need not be even locally Lipschitz: for example, consider the radial stretch map $f:\RR^n \to \RR^n$ defined by $f(x)=|x|^\alpha \cdot x$, for $\alpha<1$.  Moreover, there are examples of quasiregular maps with wild branch sets which are not even topologically conjugate to Lipschitz maps \cite{HR}.  On the other hand, if one assumes that the branch set (the subset of the domain at which the map is not a local homeomorphism) is contained in a subcomplex of codimension $2$, then the map is always topologically conjugate to a locally Lipschitz one, namely a PL branched cover, due to work of Luisto and Prywes \cite{LuPr}.  Thus, even if one wants to show that quasiregularly elliptic manifolds are BLD-elliptic---a more restrictive condition than Gromov ellipticity---the main obstacle is still likely to be the global, rather than local, geometry of the map.

\subsection{Are closed elliptic manifolds quasiregularly elliptic?}
A priori, this may seem unlikely because quasiregularity is such a restrictive condition (for example, in that quasiregular maps are always open and discrete).  However, while it is false for open manifolds as discussed above, the question remains open for closed manifolds.

Gromov suggested in \cite[p.~67]{GrMS} that it might be possible to obtain a quasiregular map by somehow optimizing over all Lipschitz maps of positive asymptotic degree: for example, looking at $1$-Lipschitz maps which attain the highest possible degree on every $R$-ball.  The example of $S^2$ is illustrative.  Here, for any choice of rectangle on the equator one gets a quasiregular map which is a cover branched at the preimages of the vertices of the rectangle and sends alternate squares of a checkerboard to the north and south hemisphere.  If one gives the squares side length $\pi$, then this map can be made $1$-Lipschitz, and presumably has the highest possible degree on all balls.  On the other hand, one can achieve the same degree on balls if one forces two branching points to collide with each other, at which point entire lines of the checkerboard are mapped to the point of collision, and the map is no longer quasiregular.  Moreover, at that point one can start moving whole strips of the checkerboard laterally, giving many more degrees of freedom of travel through maps that do not seem to  even be close to a quasiregular map.  In other words, we have described a large continuous family of optimal maps, of which a non-dense subset is quasiregular.

\subsection{Key examples for future work} \label{S:exs}
There are three key examples of spaces whose behavior must be understood to know the full picture.
\begin{enumerate}
\item Are the examples of non-nilpotent elliptic manifolds given in Theorem \ref{thm:TxS} also quasiregularly elliptic?
\item The simplest example of a simply connected non-formal manifold is the $7$-manifold $M$ which completes the pullback square
  \[\xymatrix{
    M \ar[r] \ar[d] & S^7 \ar[d]^{\text{Hopf fibration}} \\
    S^2 \times S^2 \ar[r]^-{\text{deg }1} & S^4.
  }\]
  Is this manifold elliptic?  We suspect not, but do not know how to prove this.

  Every elliptic formal $n$-manifold has $L$-Lipschitz self-maps of degree $\geq CL^n$, for all sufficiently large $L$.  On the other hand, the degree of an $L$-Lipschitz self-map of $M$ is at most $L^{20/3}$ for homological reasons.  But the same argument does not give an obstruction to building efficient Lipschitz maps $\RR^7 \to M$; indeed, we have a sketch that such a map can be built with
  \[\int_{B(R)} f^*d\vol_M \geq L^7/\log L.\]
  But the logarithmic factor seems difficult, and perhaps impossible, to get rid of.
\item For all known QR elliptic closed manifolds, the quasiregular map is constructed by factoring through a torus.  Therefore, the existence of elliptic manifolds which do not admit positive degree maps from a torus poses a challenge for constructing new quasiregular maps.

  One such space is constructed in \cite[Remark 5.2]{BeGM} as follows.  Take the real Poincar\'e duality space
  \[Y=(S^2 \times S^2)/(x,*) \sim (*,x) \mathbin{\#} 2\mathbb CP^2 \mathbin{\#} 3 \overline{\mathbb CP^2},\]
  where $*$ is a basepoint.  The cup product $H^2(Y) \times H^2(Y) \to H^4(Y)$ is the quadratic form $\langle 2,1,1,-1,-1,-1 \rangle$, which has discriminant $-2$, and therefore is not rationally equivalent to the quadratic form induced by the cup product $H^2(T^4) \times H^2(T^4) \to H^4(T^4)$.  Embed $Y$ in $\RR^{10}$.  The boundary of a thickening of this embedding is a $9$-manifold with the desired property.
\end{enumerate}

\section{Proof of the cohomological obstruction}
In this section we prove Theorem \ref{thm:coh}.  We start with the elliptic version, first given in \cite{BeGM}.
\begin{thm}
    Let $M$ be a Gromov-elliptic closed $n$-manifold.  Then there is an injective ring homomorphism $H^*(M;\RR) \to \Lambda^* \RR^n$.
\end{thm}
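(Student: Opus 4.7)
The plan is to pull back generating forms of $H^*(M;\RR)$ under a wrapping map, rescale them to the unit ball, and pass to a weak limit whose pointwise values at a generic point give the desired homomorphism. Fix a $1$-Lipschitz wrapping map $f\colon \RR^n \to M$ and smooth closed forms $\omega_1, \dots, \omega_N$ representing a basis of $H^*(M;\RR)$, with a distinguished $\omega_N$ representing the fundamental class (a positive multiple of $d\vol_M$). Each cohomology relation $[\omega_i][\omega_j] = \sum_k c^k_{ij}[\omega_k]$ lifts to $\omega_i \wedge \omega_j = \sum_k c^k_{ij}\,\omega_k + d\eta_{ij}$ on $M$, and pulling back produces closed, $L^\infty$-bounded forms $\alpha_i := f^*\omega_i$ on $\RR^n$ satisfying
\[
\alpha_i \wedge \alpha_j = \sum_k c^k_{ij}\,\alpha_k + d\gamma_{ij}, \qquad \gamma_{ij} := f^*\eta_{ij} \text{ bounded}.
\]

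Now rescale coefficient-wise to the unit ball: for $y\in B(1)$, set $\alpha_i^R(y) := \alpha_i(Ry)$ and $\gamma_{ij}^R(y) := \gamma_{ij}(Ry)$, with the $dy^I$ basis unchanged. These remain $L^\infty$-bounded uniformly in $R$, each $\alpha_i^R$ is still closed, and a direct chain-rule computation yields
\[
\alpha_i^R \wedge \alpha_j^R = \sum_k c^k_{ij}\,\alpha_k^R + R^{-1}\,d(\gamma_{ij}^R),
\]
where the $R^{-1}$ prefactor records the mismatch between exterior differentiation and the scaling. Using the positive-asymptotic-degree hypothesis, pick $R_m \to \infty$ with $R_m^{-n}\int_{B(R_m)} f^*d\vol_M \geq \delta > 0$, and then (by Banach--Alaoglu) pass to a further subsequence along which every $\alpha_i^{R_m}$ converges weakly-$*$ in $L^\infty(B(1))$ to some $\alpha_i^\infty$. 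The exact term $R_m^{-1}d(\gamma_{ij}^{R_m}) \to 0$ as distributions by integration by parts against test forms. For the product term, since every $\alpha_i^R$ is closed, the div--curl lemma (Murat--Tartar compensated compactness) gives $\alpha_i^{R_m} \wedge \alpha_j^{R_m} \rightharpoonup \alpha_i^\infty \wedge \alpha_j^\infty$ in distributions; comparing the two limits yields
\[
\alpha_i^\infty \wedge \alpha_j^\infty = \sum_k c^k_{ij}\,\alpha_k^\infty \quad \text{a.e.\ on } B(1).
\]

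To conclude, change of variables gives $\int_{B(1)}\alpha_N^{R_m} = c\, R_m^{-n}\int_{B(R_m)} f^*d\vol_M \geq \delta' > 0$ for a positive constant $c$, hence $\int_{B(1)}\alpha_N^\infty \geq \delta'$ and $\alpha_N^\infty$ is nonzero on a set $E\subset B(1)$ of positive measure. Pick $y_0\in E$ that is simultaneously a Lebesgue point of each $\alpha_i^\infty$ and lies in the full-measure set where all the a.e.\ relations above hold. Then $\Phi([\omega_i]) := \alpha_i^\infty(y_0) \in \Lambda^{k_i}\RR^n$ defines a ring homomorphism by construction, and injectivity follows from Poincar\'e duality on $M$: any nonzero $[\omega]$ admits some $[\omega']$ with $[\omega][\omega']$ a nonzero multiple of $[\omega_N]$, so $\Phi([\omega]) \wedge \Phi([\omega'])$ is a nonzero multiple of $\alpha_N^\infty(y_0) \neq 0$, forcing $\Phi([\omega]) \neq 0$.

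The main obstacle is the compensated-compactness step: in general, weak-$*$ limits of products of bounded sequences in $L^\infty$ are not products of the limits, so without exploiting closedness of the $\alpha_i^R$ the ring structure would not survive the passage to the limit. The remaining ingredients---uniform $L^\infty$ bounds on pullbacks, weak-$*$ compactness, the change of variables controlling the top form, and the Poincar\'e duality deduction of injectivity---should be essentially routine.
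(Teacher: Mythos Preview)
Your argument is correct and follows the same overall strategy as the paper: pull back representative forms, rescale to a fixed ball, pass to a limit in which the primitives of the relations disappear, and then evaluate at a generic point using Poincar\'e duality for injectivity. The only substantive difference is in the technical packaging of the key step (products surviving the limit): the paper invokes Whitney's theory of flat forms and the fact that wedge products are continuous under flat convergence, whereas you use weak-$*$ compactness in $L^\infty$ together with the div--curl/compensated compactness lemma. These are essentially the same mechanism under different names---Whitney's product continuity for flat forms is an early avatar of compensated compactness---so the arguments are interchangeable. Your presentation via a basis and structure constants rather than generators and polynomial relations is likewise cosmetically different but equivalent.
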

The basic idea, which carries over into the proof of the quasiregular case, is to choose differential forms generating the cohomology ring of $M$ and pull them back along a Lipschitz map of positive asymptotic degree.  As we ``zoom out'' further and further and rescale, the exact forms corresponding to relations between these forms become smaller and smaller.  In the limit, the differential forms factor through a ring homomorphism from $H^*(M;\RR)$ to a space of forms.  Restricting to a point where this homomorphism is nontrivial, we get a homomorphism to $\Lambda^*\RR^n$.

For the elliptic case, we use flat differential forms as introduced by Whitney \cite[Ch.~IX]{Whi}.  These can be thought of as formal limits of smooth forms in the \emph{flat norm} given by
\[\lVert\omega\rVert_\flat=\lVert\omega\rVert_\infty+\lVert d\omega \rVert_\infty.\]
Flat forms have a number of relevant properties, for example the pullback of a smooth (or flat) form along a Lipschitz function is a flat form \cite[Theorem 3.6]{GKS}.  Following Whitney, we write $\Omega_\flat^*(M)$ for the complex of flat forms on $M$.
\begin{proof}
  Let $w_1,\ldots,w_r$ be a generating set for $H^*(M;\RR)$, where the degree of $w_i$ is $a_i$, and for each $i$ let $\omega_i \in \Omega^{a_i}(M)$ be a differential form representing $w_i$.  We include the fundamental class among the $w_i$, and represent it by the volume form $d\vol_M$.  Let $P_1,\ldots,P_s \in \RR[w_1,\ldots,w_r]$ be a list of polynomials such that as a ring,
  \[H^*(M;\RR) \cong \RR[w_1,\ldots,w_r]/(P_1,\ldots,P_s).\]
  (Here $\RR[w_1,\ldots,w_r]$ denotes the free \emph{graded commutative} ring in variables of the appropriate degrees.)  Then for each $j$, $P_j(\omega_1,\ldots,\omega_r)$ is an exact form, so we can fix $\alpha_j \in \Omega^{b_j-1}(M)$ such that
  \[d\alpha_j=P_j(\omega_1,\ldots,\omega_r),\]
  where $b_j$ is the (graded) degree of $P_j$.

  Suppose that $f:\RR^n \to M$ is a $1$-Lipschitz map of positive asymptotic degree.  For every $t>0$ define $f_t(x)=f(tx)$; this is a $t$-Lipschitz map.  Now we consider forms
  \[\omega_{i,t}=\frac{f_t^*\omega_i}{t^{a_i}}, \qquad \alpha_{j,t}=\frac{f_t^*\alpha_j}{t^{b_j}}.\]
  Since pulling back along a $t$-Lipschitz map multiplies the infinity-norm of a $k$-form by at most $t^k$, we have
  \[\lVert \omega_{i,t} \rVert_\infty \leq 1, \qquad \lVert \alpha_{j,t} \rVert_\infty \leq 1/t.\]
  By definition of positive asymptotic degree, there is an $\epsi>0$ and a sequence of $t \to \infty$ such that $t^{-n}\int_{B_1(\RR^n)} f_t^*d\vol_M \geq \epsi$.  By the Arzel\`a--Ascoli theorem, this sequence has a subsequence $t_1,t_2,\ldots \to \infty$ for which the $\omega_{i,t_k}$ converge in the flat norm; we have
  \[\lim_{k \to \infty} \omega_{i,t_k}=\omega_{i,\infty} \in \Omega^{a_i}_\flat(\RR^n), \qquad \lim_{k \to \infty} \alpha_{j,t_k}=0.\]
  Since flat limits preserve wedge products \cite[\S IX.17]{Whi} and differentials (by definition), this implies that the ring homomorphism $\RR[w_1,\ldots,w_r] \to \Omega^*_\flat(\RR^n)$ defined by $w_i \mapsto \omega_{i,\infty}$ passes to a well-defined map on the quotient ring $H^*(M,\RR) \cong \RR[w_1,\ldots,w_r]/(P_1,\ldots,P_s)$, giving a ring homomorphism
  \[\ph_\infty:H^*(M;\RR) \to \Omega^*_\flat(\RR^n).\]
  Moreover, flat convergence implies that
  \[\int_{B_1(\RR^n)} \ph_\infty(d\vol_M) \geq \epsi.\]
  In particular, $\ph_\infty(d\vol_M)$ is nonzero on some set of positive measure.  While flat forms are not well-defined pointwise, they are well-defined up to a measure zero set, so we can choose representatives and then choose a point in this set of positive measure where these representatives actually restrict to a ring homomorphism
  \[H^*(M;\RR) \to \Lambda^*\RR^n.\]
  This homomorphism sends the fundamental class to a nonzero element, so by Poincar\'e duality it is injective.
\end{proof}
The proof of the quasiregular case uses a similar overall strategy, but is somewhat more complicated, using ideas from \cite{Pry}.  The main idea is still to compare forms defined on larger and larger balls, so that in the limit primitives go to zero.  However, the scaling factor is no longer related to the size of the ball; instead, the existence of a useful scaling factor is guaranteed by quasiregularity.  Moreover, the sequence converges in an $L^p$ rather than a flat sense.

Before we begin the proof, we set up the main tools we will use.  In order to apply results from \cite{Pry}, we assume that $M$ has a nontrivial cohomology class in some degree strictly between $0$ and $n$; if this is not the case, the theorem is trivial.  As before, let $w_1,\ldots,w_r$ be a generating set for $H^*(M;\RR)$, where the degree of $w_i$ is $a_i$, and for each $i$ let $\omega_i \in \Omega^{a_i}(M)$ be a differential form representing $w_i$.  Once again, we assume that one of the $w_i$ is the fundamental class and the corresponding $\omega_i$ is $d\vol_M$.  Let $P_1,\ldots,P_s \in \RR[w_1,\ldots,w_r]$ be a list of polynomials such that as a ring,
\[H^*(M;\RR) \cong \RR[w_1,\ldots,w_r]/(P_1,\ldots,P_s).\]
Then for each $j$, $P_j(\omega_1,\ldots,\omega_r)$ is an exact form, so we can fix $\alpha_j \in \Omega^{b_j-1}(M)$ such that
\[d\alpha_j=P_j(\omega_1,\ldots,\omega_r),\]
where $b_j$ is the (graded) degree of $P_j$.

Suppose that $f \colon \RR^n \to M$ is $K$-quasiregular.  For $B\subset \RR^n$ a ball, let 
\[A(B)=\int_B J_f = \int_{B} f^*d\vol_M\]
be the \textit{counting function} for $f$.  We consider rescalings of the forms $f^*\omega_i$ and $f^*\alpha_j$ by this counting function, as described in \cite[Section 3]{Pry}.  Given a sequence of balls $B_k \subset \RR^n$ and an $s$-form $\eta \in \Omega^s(M)$, we define forms on $B = B_1(\RR^n)$ by
\begin{align*}
  \rho_k(\eta) &= \frac{1}{A(B_k)^{s/n}} T_k^*f^*\eta \\
  \sigma_k(\eta) &= \frac{1}{A(B_k)^{(s+1)/n}}T_k^*f^*\eta,
\end{align*}
where $T_k$ is the homothety taking $B$ to $B_k$.

We summarize the main results in \cite[Section 3]{Pry} as applied to our setting.  Define $p_i = n/k$ and $q_j = n/b_j$.

\begin{lem}\label{lem:lpconvergence}
  There exists a sequence of balls $B_k \subset \RR^n$ for which the following
  holds.
  \begin{enumerate}[(i)]
  \item Whenever $0<a_i<n$, there is a form $\rho_\infty(\omega_i)$ so that
    \[\rho_k(\omega_i) \to \rho_\infty(\omega_i)\]
    weakly in $L^{p_i}(B)$.  Moreover, there exist forms $\tau_{i,k}$ and
    $\tau_{i,\infty}$ so that $d\tau_{i,k} = \rho_k(\omega_i)$,
    $d\tau_{i,\infty} = \rho_\infty(\omega_i)$, and $\tau_{i,k}$ converges strongly
    in $L^{p_i}(B)$ to $\tau_{i,\infty} \in W^{1,p_i}(B)$. \label{parti}
  \item For every ball $B_k$, we have that $A(B_k) \le C(n) A(\frac{1}{2}B_k)$,
    and $\lim_{k\to \infty} A(B_k) = \infty$. \label{partiii}
  \item The forms $\rho_k(d\vol_M)$ converge in distribution to a form
    $\rho_\infty(d\vol_M)$ which satisfies \label{partiv}
    \[\int_B \rho_\infty(d\vol_M)=\vol(B).\]
  \end{enumerate}
\end{lem}
The first new result relates to the convergence of rescalings of $\alpha_j$:
\begin{lem}\label{lem:alphaconvergence}
  Up to taking a subsequence of the index $k$, the sequence
  $\sigma_k(\alpha_j) \to 0$ strongly in $L^{\frac{n}{b_j-1}}(B)$ and
  $d\sigma_k(\alpha_j) \to 0$ weakly in $L^{q_j}(B)$ when $b_j<n$ and in
  distribution when $b_j = n$.
\end{lem}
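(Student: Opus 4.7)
The plan is to exploit quasiregularity to bound the pullbacks of the smooth forms $\alpha_j$ and $d\alpha_j = p_j(\omega_1,\ldots,\omega_r)$ in $L^p$ norms that scale correctly against the counting function $A(B_k)$. The starting point is the pointwise bound $|f^*\eta(x)| \leq C_\eta |Df(x)|^k$, valid almost everywhere for any bounded smooth $k$-form $\eta$ on $M$. Combined with the quasiregularity condition $|Df(x)|^n \leq K J_f(x)$, this yields
\[|f^*\eta(x)|^{n/k} \leq C_{\eta,K}\, J_f(x),\]
so that for any ball $B \subset \RR^n$,
\[\int_B |f^*\eta|^{n/k} \leq C\, A(B).\]
Applying this to $\eta = \alpha_j$ (of degree $b_j - 1$) and changing variables along $T_k$ gives
\[\bigl\|T_k^* f^* \alpha_j\bigr\|_{L^{n/(b_j-1)}(B_1)}^{n/(b_j-1)} \leq C\, A(B_k).\]

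Dividing by the normalization $A(B_k)^{1/q_j} = A(B_k)^{b_j/n}$ used to define $\alpha_{j,k}$ produces
\[\|\alpha_{j,k}\|_{L^{n/(b_j-1)}(B_1)} \leq C\, A(B_k)^{(b_j-1)/n - b_j/n} = C\, A(B_k)^{-1/n},\]
which tends to zero by Lemma \ref{lem:lpconvergence}(iv). This already gives strong $L^{n/(b_j-1)}$ convergence of $\alpha_{j,k}$ to $0$, which is stronger than needed. Applying the same pullback estimate with $\eta = d\alpha_j$ of degree $b_j$ yields the uniform bound
\[\|d\alpha_{j,k}\|_{L^{n/b_j}(B_1)}^{n/b_j} = A(B_k)^{-1} \bigl\|T_k^* f^* d\alpha_j\bigr\|_{L^{n/b_j}(B_1)}^{n/b_j} \leq C.\]

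When $b_j \neq n$, the exponent $q_j = n/b_j$ lies in $(1,\infty)$, so $L^{q_j}$ is reflexive and a subsequence of $d\alpha_{j,k}$ converges weakly in $L^{q_j}(B_1)$. To identify the limit as zero, I would note that strong $L^{n/(b_j-1)}$ convergence of $\alpha_{j,k}$ to $0$ implies distributional convergence, so $d\alpha_{j,k} \to 0$ in distributions; since weak $L^{q_j}$ convergence also implies distributional convergence, the weak limit must be $0$. When $b_j = n$, the corresponding $L^1$ bound does not yield weak compactness, but the distributional argument is unchanged and gives the stated conclusion. The main technical point is the careful scaling computation combined with the trick of playing distributional convergence of the primitives against reflexive weak compactness of their differentials; once this is set up, no further compactness theorems beyond Banach--Alaoglu are required.
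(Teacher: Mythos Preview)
Your proposal is correct and follows essentially the same approach as the paper: both use the quasiregularity bound $|f^*\eta|^{n/k}\le C J_f$ to obtain $\|\alpha_{j,k}\|_{n/(b_j-1)}\le C A(B_k)^{-1/n}\to 0$ and the uniform bound $\|d\alpha_{j,k}\|_{q_j}\le C$, then combine distributional convergence of $d\alpha_{j,k}$ (via integration by parts against the vanishing primitives) with reflexive weak compactness to identify the weak limit as $0$. The only cosmetic difference is that the paper writes out the integration-by-parts step explicitly, whereas you invoke continuity of $d$ on distributions.
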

\begin{proof}
  We first show that $\sigma_k(\alpha_j)$ converges to $0$:
  \begin{align*}
    \|\sigma_k(\alpha_{j})\|_{\frac{n}{b_j-1}} &\le A(B_k)^{-\frac{b_j}{n}}\paren{\int_{B_k} |f^*\alpha_{j}|^{\frac{n}{b_j-1}}}^{\frac{b_j-1}{n}} \\
    &\le A(B_k)^{-\frac{b_j}{n}} \|\alpha_j\|_\infty K \paren{\int_{B_k} J_f}^{\frac{b_j-1}{n}} \\
    &= \|\alpha_j\|_\infty K A(B_k)^{-\frac{1}{n}},
  \end{align*} 
  which converges to $0$ as $k \to \infty$ by Lemma \ref{lem:lpconvergence}\ref{partiii}.

  We next show that $d\sigma_k(\alpha_j)$ converges in distribution.  Suppose
  that $\phi$ is a test form on $B$.  Then using integration by parts,
  \begin{align*}
    \abs{\int_{B} d\sigma_k(\alpha_{j}) \wedge \phi} = \abs{-\int_{B}\sigma_k(\alpha_j) \wedge d\phi}.
  \end{align*}
  This term converges to $0$ as $k \to \infty$ by the strong convergence of
  $\sigma_k(\alpha_{j})$.

  Lastly,
  \begin{align*}
    \|d\sigma_k(\alpha_{j})\|_{\frac{n}{b_j}} &\le K \|d\alpha_{j}\|_\infty A(B_k)^{-b_j/n} \paren{\int_{B_k}J_f}^{-b_j/n} \\
        &=K \|d\alpha_{j}\|_\infty.
  \end{align*}
  Therefore the sequence is bounded, and when $b_j<n$ there exists a
  subsequence that converges weakly.  Since the sequence converges to $0$ in
  distribution, the weak limit is also $0$.
\end{proof}
Next we show that wedge products of rescaled forms converge to the wedge
product of their limits.
\begin{lem}\label{lem:productconvergence}
  Let $\omega_i$ be one of the closed forms above and let $\eta_{k}$ be a
  sequence of closed $a'$-forms, where $a_i+a' \leq n$ and $a_i,a' \neq 0$. Let
  $p' = n/a'$.  If $\eta_k \to \eta_{\infty}$ weakly in $L^{p'}(B)$, then
  $\rho_k(\omega_i) \wedge \eta_k \to \rho_\infty(\omega_i) \wedge \eta_\infty$
  weakly in $L^{n/(a_i+a')}(B)$, if $a_i+a' < n$, and in distribution, if $a+a' = n$.
  When $a_i+a'=n$, the element lies in $L^1(B)$.
\end{lem}
\begin{proof}
  Let $\phi$ be a smooth test form on $B$.  By Lemma \ref{lem:lpconvergence}\ref{parti},
  we have forms $\tau_{i,k}$ and $\tau_{i,\infty}$ satisfying
  $d\tau_{i,k} =\rho_k(\omega_i)$ and $d\tau_{i,\infty} = \rho_\infty(\omega_k)$, and such that $\tau_{i,k} \to \tau_{i,\infty}$ strongly in $L^p$.
  Then
  \begin{align*}
    \abs{\int_{B} (\rho_k(\omega_i) \wedge \eta_k - \rho_\infty(\omega_i) \wedge \eta_\infty) \wedge \phi}  &= \abs{\int_{B} d(\tau_{i,k} \wedge \eta_k - \tau_{i,\infty} \wedge \eta_\infty) \wedge \phi} \\
            &= \abs{\int_{B} (\tau_{i,k} \wedge \eta_k - \tau_{i,\infty} \wedge \eta_\infty) \wedge d\phi},
  \end{align*}
  by integration by parts. Additionally,
  \[\abs{\int_{B} (\tau_{i,k} \wedge \eta_k - \tau_{i,\infty} \wedge \eta_\infty) \wedge d\phi}  \le \abs{\int_{B} (\tau_{i,k} - \tau_{i,\infty}) \wedge (\eta_k \wedge d\phi)} + \abs{\int_{B}\tau_{i,\infty}\wedge(\eta_k - \eta_\infty)\wedge d\phi}.\]
  For the first term, by H\"older's inequality,
  \[\abs{\int_{B} (\tau_{i,k}-\tau_{i,\infty}) \wedge (\eta_k \wedge d\phi)} \le C(n)\|\tau_{i,k} - \tau_{i,\infty}\|_{p_i}\|\eta_k\|_{p'} \|d\phi\|_q,\]
  where $q^{-1} = 1- p_i^{-1} - (p')^{-1}$.  This converges to $0$ since $\tau_{i,k}$ converges strongly and $\eta_k$ is bounded in $L^{p'}$.  For
  the second term, $d\phi\wedge\tau_{i,\infty}$ is an element of $L^{p'/(p'-1)}$
  since $a_i+a' \le n$. By the weak convergence of $\eta_k$, this term also
  converges to $0$.  This gives that $\rho_k(\omega_i) \wedge \eta_k$ converges
  to $\rho_\infty(\omega_i) \wedge \eta_\infty$ in distribution.  If $a_i+a' < n$,
  by H\"older's inequality we know that 
  \[\|\rho_k(\omega_i) \wedge \eta_k\|_{\frac{n}{a_i+a'}} \le \|\rho_k(\omega_i)\|_{p_i} \|\eta_k\|_{p'}.\]
  Therefore the wedge product converges weakly, up to a subsequence, to some
  element in $L^{n/(a_i+a')}(B)$.  This element must be
  $\rho_\infty(\omega_i) \wedge \eta_\infty$ by the convergence result in
  distribution. When $a_i+a' = n$, it lies in $L^1(B)$.
\end{proof}

\begin{proof}[Proof of Theorem \ref{thm:coh}]
  Let $L^*(B)$ be the ring of differential forms whose elements of degree $a$
  are in $L^{n/a}(B)$.  In order to prove the theorem, we would like to show
  that the ring homomorphism
  \[\ph_\infty: \RR[w_1,\ldots,w_r] \to L^*(B)\]
  defined by $\ph_\infty(w_i)=\rho_\infty(\omega_i)$ factors through a
  homomorphism
  \[\overline{\ph_\infty} \colon H^*(M,\RR) \to L^*(B).\]
  Inductively applying Lemma \ref{lem:productconvergence}, we see that for
  every polynomial $P(w_1,\ldots,w_r)$,
  \[\ph_\infty(P(w_1,\ldots,w_r))=\lim_{k \to \infty} \rho_k(P(\omega_1,\ldots,\omega_r))\]
  in the weak topology of $L^{n/\deg P}(B)$, if $\deg P<n$, and in distribution,
  if $\deg P=n$.  By Lemma \ref{lem:alphaconvergence}, it follows that for each
  defining relation $P_j$ of $H^*(M;\RR)$,
  \[\ph_\infty(P_j(w_1,\ldots,w_r))=\lim_{k \to \infty} \rho_k(d\alpha_j)=
    \lim_{k \to \infty} d\sigma_k(\alpha_j)=0,\]
  so indeed the homomorphism factors through.

  Next we must show that $\ph_\infty$ is injective.  As in the elliptic case, it
  suffices to show that the fundamental class is sent to a nonzero element.
  But this is Lemma \ref{lem:lpconvergence}\ref{partiv}.
\end{proof}

\section{Consequences of the cohomological obstruction} \label{S:cors}

\subsection{The fundamental group}
We start by proving Corollary \ref{cor:pi1}, restated here:
\begin{repthm}{cor:pi1}
  If $M$ is a Gromov-elliptic or quasiregularly elliptic closed $n$-manifold, then $\pi_1(M)$ is virtually abelian.
\end{repthm}
This can be derived from a purely algebraic lemma.  Given a Lie group $G$, we define its cohomology ring $H^*(G)$ as the cohomology of the differential graded algebra (DGA) $\Omega^*(G)^G$ of $G$-invariant differential forms on $G$.  This DGA $\Omega^*(G)^G$ is also known as the \emph{Chevalley--Eilenberg complex} of the Lie algebra $\mathfrak g$ of $G$.  As a ring, $\Omega^*(G)^G \cong \Lambda^*\RR^{\dim G}$, since every element is obtained by translating a form on the tangent space of the identity.  However, the differential is determined by the group structure of $G$.  The invariant vector fields on $G$ can be identified with $\mathfrak g$, and the differential on invariant $1$-forms is dual to its Lie bracket, thought of as a function $\mathfrak g \wedge \mathfrak g \to \mathfrak g$.
\begin{lem} \label{lem:Lie}
  Let $G$ be a simply connected nilpotent Lie group.  If there is an injective
  ring homomorphism $i:H^*(G) \to \Lambda^*\RR^n$ for some $n$, then $G$ is
  abelian.
\end{lem}
This can be thought of as a special case of a Lie group analogue of a theorem of Stallings for discrete groups \cite{Sta}.
\begin{proof}
  Consider the lower central series
  \[G_0=G, \quad G_1=[G,G], \quad G_2=[G_1,G], \quad \ldots\]
  and the corresponding tower of quotients
  \[\cdots \to Q_3 \xrightarrow{q_3} Q_2 \xrightarrow{q_2} Q_1 \to 0\]
  where $Q_i=G/G_i$.  In particular, $Q_1$ is the abelianization of $G$.  By
  the work of Mal'cev \cite{Mal}, the $Q_i$ are likewise simply connected, and
  moreover, their Lie algebras are the lower central series quotients of the
  Lie algebra $\mathfrak g$ of $G$:
  \[\mathfrak q_i = \mathfrak g/\mathfrak g_i, \quad
    \text{where}\quad \mathfrak g_0 =\mathfrak g, \quad \mathfrak g_1=[\mathfrak g,\mathfrak g], \quad \mathfrak g_2=[\mathfrak g_1,\mathfrak g], \quad \ldots\]

  Now we have a sequence of maps
  \[H^*(Q_1) \xrightarrow{q_2^*} H^*(Q_2) \to H^*(G) \xrightarrow{i} \Lambda^*\RR^n\]
  whose composition is injective on $H^1(Q_1)$, and therefore on the subalgebra
  generated by $H^1(Q_1)$.  Since $Q_1$ is abelian, the differential in
  $\Omega^*(Q_1)^{Q_1}$ is zero, $H^1(Q_1)$ is all of the invariant $1$-forms,
  and so this subalgebra is all of $H^*(Q_1)$.  We will show that if $G$ is not
  abelian, that is if $Q_2 \neq Q_1$, then the induced homomorphism
  $q_2^*:H^2(Q_1) \to H^2(Q_2)$ cannot be injective, deriving a contradiction.

  Let $(q_2)_*$ be the induced quotient map $\mathfrak q_2 \to \mathfrak q_1$.
  By definition,
  \[\ker(q_2)_*=[\mathfrak q_2,\mathfrak q_2]\]
  and
  \[[\ker(q_2)_*,\mathfrak q_2]=[\mathfrak g_2,\mathfrak g]/\mathfrak g_3=0.\]
  From the duality between the Lie bracket and the differential, we get that
  \[\ker d|_{\Omega^1(Q_2)^{Q_2}}=q_2^*\Omega^1(Q_1)^{Q_1}\]
  and that for $\omega \in \Omega^1(Q_2)^{Q_2}$,
  \[d\omega(X,Y)=0 \qquad \text{for all }X \in \mathfrak q_2, Y \in \ker(q_2)_*.\]
  In other words, $d\Omega^1(Q_2)^{Q_2}$ is a $(\dim Q_2-\dim Q_1)$-dimensional
  subspace of $q_2^*\Omega^2(Q_1)^{Q_1}$.  On the other hand, since $Q_1$ is
  abelian, $H^*(Q_1)=\Omega^2(Q_1)^{Q_1}$, so this subspace is the kernel of
  $q_2^*$ in $H^2(Q_1)$.
\end{proof}

\begin{proof}[Proof of Theorem \ref{cor:pi1}]
  By Theorem \ref{thm:pi1}, $\pi_1(M)$ is virtually nilpotent.  Since covers of
  (quasiregularly, Gromov\=/)\allowbreak elliptic spaces are elliptic, we can
  assume by passing to a finite cover that $\pi_1(M)$ is nilpotent and
  torsion-free \cite[Lemma 2.42]{CSDA}.

  Let $\Gamma=\pi_1(M)$, and let $G$ be its \emph{Mal'cev completion}: a simply
  connected (and therefore contractible) nilpotent Lie group which contains it
  as a lattice \cite{Mal}.  Then the nilmanifold $N=G/\Gamma$ is an
  Eilenberg--MacLane space of type $K(\Gamma,1)$, so there is a map $g:M \to N$
  which induces a $\pi_1$-isomorphism.  The pullback and Theorem \ref{thm:coh}
  induce maps
  \begin{equation} \label{eqn:maps}
    H^*(N;\RR) \xrightarrow{g^*} H^*(M;\RR) \xhookrightarrow{\iota} \Lambda^*\RR^n
  \end{equation}
  whose composition is injective on $H^1(N;\RR)$, and therefore on the
  subalgebra generated by $H^1(N;\RR)$.  Therefore the subalgebra of
  $H^*(N;\RR)$ generated by $H^1(N;\RR)$ is isomorphic to $\Lambda^*\RR^k$ where
  $k=\rk H^1(N;\RR)$.

  Nomizu's theorem \cite{Nomizu} implies that $H^*(N;\RR)$ is isomorphic as a
  ring to $H^*(G)$.  Therefore, by Lemma \ref{lem:Lie}, $G$ is abelian, and so
  is its lattice $\Gamma$.
\end{proof}

So every elliptic and quasiregularly elliptic manifold has a finite cover whose fundamental group is $\ZZ^r$ for some $r$.

\subsection{Other topological restrictions}
We can use our results to derive further restrictions on the topology of elliptic and quasiregularly elliptic spaces:
\begin{cor} \label{cor:whole_torus}
  Suppose that $M$ is an elliptic or quasiregularly elliptic closed $n$-manifold and $\pi_1(M)$ has rank $k$.  Then there is finite cover $\tilde M$ of $M$ which admits an induced map $\tilde M \to T^k$ which is surjective on real homology.
\end{cor}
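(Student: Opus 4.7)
The plan is to reduce to the case where $\pi_1$ is exactly $\ZZ^k$, produce the classifying map to $T^k$, and then argue via Theorem \ref{thm:coh} that this map must be injective on real cohomology (equivalently, surjective on real homology).

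First, by Corollary \ref{cor:pi1} the fundamental group $\pi_1(M)$ is virtually abelian of rank $k$, so some finite-index subgroup is isomorphic to $\ZZ^k \oplus F$ for a finite group $F$. Projecting onto $F$ and taking the kernel gives a further finite-index subgroup isomorphic to $\ZZ^k$, which corresponds to a finite cover $\tilde M \to M$ with $\pi_1(\tilde M) \cong \ZZ^k$. Since finite covers of elliptic (resp.\ quasiregularly elliptic) manifolds are elliptic (resp.\ quasiregularly elliptic), $\tilde M$ inherits the hypothesis. The classifying map on $\pi_1$ yields a continuous map $g: \tilde M \to K(\ZZ^k, 1) = T^k$, unique up to homotopy, which induces an isomorphism $g_*: H_1(\tilde M; \RR) \to H_1(T^k; \RR)$ and dually an isomorphism $g^*: H^1(T^k; \RR) \to H^1(\tilde M; \RR)$.

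Next, apply Theorem \ref{thm:coh} to $\tilde M$ to obtain an injective ring homomorphism $\iota: H^*(\tilde M; \RR) \hookrightarrow \Lambda^* \RR^n$. Composing gives a graded ring homomorphism
\[
\Lambda^* \RR^k \cong H^*(T^k; \RR) \xrightarrow{g^*} H^*(\tilde M; \RR) \xhookrightarrow{\iota} \Lambda^* \RR^n
\]
which is injective in degree $1$, since $g^*$ is an isomorphism there and $\iota$ is injective. The elementary algebraic fact that a graded ring homomorphism between exterior algebras which is injective in degree $1$ is injective overall (its image is the exterior algebra on the image of degree $1$, which is $k$-dimensional) then forces the composition $\iota \circ g^*$ to be injective. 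In particular, $g^*: H^*(T^k; \RR) \to H^*(\tilde M; \RR)$ itself is injective. Dualizing via the universal coefficients theorem, the induced map $g_*: H_*(\tilde M; \RR) \to H_*(T^k; \RR)$ is surjective, as desired.

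There is no real obstacle here: the work was already done in Theorem \ref{thm:coh} and Corollary \ref{cor:pi1}. The only minor care needed is in arranging the finite cover to have $\pi_1 \cong \ZZ^k$ on the nose (rather than merely $\ZZ^k \oplus F$), and in verifying the elementary injectivity statement about exterior algebras.
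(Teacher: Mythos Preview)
Your argument is correct and follows essentially the same route as the paper's proof: pass to a finite cover with $\pi_1 \cong \ZZ^k$, take the classifying map to $T^k$, and use the embedding of $H^*(\tilde M;\RR)$ into $\Lambda^*\RR^n$ from Theorem~\ref{thm:coh} to see that the pullback of $H^*(T^k;\RR)$ is the full exterior algebra on a $k$-dimensional subspace of $\Lambda^1\RR^n$. You are slightly more explicit than the paper in arranging $\pi_1(\tilde M)\cong\ZZ^k$ exactly and in stating the elementary exterior-algebra injectivity fact, but the content is the same.
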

\begin{proof}
  First we construct the induced map.  Since $\pi_1(M)$ is virtually abelian, we can choose $\tilde M$ to be a finite cover such that $\pi_1(\tilde M) \cong \ZZ^k$.  Then there is a map $f:\tilde M \to T^k$ which induces an isomorphism on $\pi_1$.

  Now consider the pullback of $H^*(T^k;\RR)$ along $f$.  Since $H^*(\tilde M;\RR)$ embeds in $\Lambda^*\RR^n$, the image of this pullback is the subalgebra generated by a $k$-dimensional vector subspace of $\Lambda^1\RR^n$.  This subalgebra is isomorphic to $\Lambda^*\RR^k$, meaning that $f^*$ is injective.  Dually, $f$ is surjective on homology.
\end{proof}
\begin{ex}
  Construct an $n$-manifold $M$, $n \geq 4$, by embedding $(T^3)^{(2)}$ (the $2$-skeleton of $T^3$) in $\RR^{n+1}$ and letting $M$ be the boundary of a thickening of this embedding.  Then $M$ is not elliptic or quasiregularly elliptic by Corollary \ref{cor:whole_torus}.
\end{ex}
\begin{cor} \label{cor:n-1}
    Suppose that $M$ is an elliptic or quasiregularly elliptic closed $n$-manifold.  Then $\pi_1(M)$ cannot have rank exactly $n-1$.
\end{cor}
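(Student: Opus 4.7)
The plan is to derive a contradiction from Poincar\'e duality on a finite cover, using Corollary \ref{cor:whole_torus} as a black box. Suppose for contradiction that $M$ is (Gromov- or quasiregularly) elliptic with $\pi_1(M)$ of rank exactly $n-1$. By Corollary \ref{cor:whole_torus}, I may pass to a finite cover $\tilde M$ with $\pi_1(\tilde M) \cong \ZZ^{n-1}$ admitting a map $f \colon \tilde M \to T^{n-1}$ whose induced map $f^* \colon H^*(T^{n-1};\RR) \to H^*(\tilde M;\RR)$ is injective. The cover $\tilde M$ is again a closed oriented $n$-manifold, so Poincar\'e duality will apply.

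First I would observe that since $\pi_1(\tilde M) \cong \ZZ^{n-1}$ is abelian, $H^1(\tilde M;\RR) \cong \RR^{n-1}$, and so $f^*$ is already an isomorphism in degree one (it is injective and both sides have the same dimension). Next, I would let $\eta \in H^{n-1}(T^{n-1};\RR) \cong \RR$ be a generator of the top cohomology of the torus and set
\[
  \omega \;=\; f^*\eta \;\in\; H^{n-1}(\tilde M;\RR).
\]
By injectivity of $f^*$, the class $\omega$ is nonzero.

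The key computation is that $\omega$ has trivial cup product with every class in $H^1(\tilde M;\RR)$. Indeed, given any $\beta \in H^1(\tilde M;\RR)$, the degree-one isomorphism lets me write $\beta = f^*\gamma$ for some $\gamma \in H^1(T^{n-1};\RR)$, and then
\[
  \omega \cup \beta \;=\; f^*(\eta \cup \gamma) \;\in\; f^*H^n(T^{n-1};\RR) \;=\; 0,
\]
since $H^n(T^{n-1};\RR) = 0$ for dimension reasons. On the other hand, because $\tilde M$ is a closed oriented $n$-manifold, Poincar\'e duality provides a non-degenerate pairing $H^1(\tilde M;\RR) \times H^{n-1}(\tilde M;\RR) \to H^n(\tilde M;\RR) \cong \RR$, so every nonzero class in $H^{n-1}(\tilde M;\RR)$ must cup nontrivially with some degree-one class. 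This contradicts the nonvanishing of $\omega$, completing the argument.

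There is no real obstacle here beyond invoking Corollary \ref{cor:whole_torus}: the heavy lifting (the existence of the cover and the injectivity of $f^*$) is already done, and the remainder is a two-line application of Poincar\'e duality together with the vanishing of $H^n$ of a lower-dimensional torus. If anything, the one point to be careful about is recording that $\tilde M$ inherits an orientation from $M$, so that Poincar\'e duality is available.
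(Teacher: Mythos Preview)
Your proof is correct and follows essentially the same strategy as the paper's: both produce a nonzero class in $H^{n-1}$ coming from the product of the $(n-1)$ degree-one generators, observe that every degree-one class lies in their span, and derive a contradiction with Poincar\'e duality. The only cosmetic difference is that the paper works inside $\Lambda^*\RR^n$ via the embedding of Theorem~\ref{thm:coh}, whereas you package the same information through Corollary~\ref{cor:whole_torus} and the pullback $f^*$; the underlying contradiction is identical.
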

\begin{proof}
  Suppose $\pi_1(M)$ has rank $n-1$.  By passing to a finite cover, we can assume that $\pi_1(M) \cong \ZZ^{n-1}$.  Consider an injective map $i:H^*(M;\RR) \to \Lambda^*\RR^n$.  The image of $i$ contains an $(n-1)$-dimensional subspace of $\Lambda^1\RR^n$, without loss of generality $\langle dx_1,\ldots,dx_{n-1} \rangle$.  The product $dx_1 \wedge \cdots \wedge dx_{n-1}$ must pair nontrivially with a $1$-form in the image of $i$, which then has to contain a nonzero $dx_n$ term.  This is a contradiction.
\end{proof}
One might guess from the previous two corollaries that in fact every $n$-dimensional Poincar\'e duality algebra $A$ which embeds in $\Lambda^*\RR^n$ splits as a product $A=\Lambda^*\RR^k \otimes B$, where $B$ has no elements of degree $1$.  However, we have only been able to prove something weaker:
\begin{prop} \label{prop:quotient}
  Suppose that $A \subset \Lambda^* \RR^n$ is a graded subalgebra which
  satisfies Poincar\'e duality in degree $n$, and let $x \in A^1$.  Then
  $A/(x)$ also satisfies Poincar\'e duality, and moreover $A/(x)$ and $(x)$
  have the same dimension.
\end{prop}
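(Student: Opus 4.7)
The strategy is to first establish $\operatorname{ann}_A(x) = (x)$, from which the dimension equality $\dim A/(x) = \dim (x)$ follows by rank-nullity, and then to verify that the Poincar\'e duality pairing on $A$ descends to one on $A/(x)$. Let $d$ be the formal dimension of $A$; the case $x=0$ is vacuous, so assume $x \neq 0$.

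The critical use of the embedding $A \subseteq \Lambda^*\RR^n$ is the following: for nonzero $x \in \Lambda^1\RR^n$, the Koszul complex $(\Lambda^*\RR^n, m_x)$ is exact (extend $x$ to a basis $e_1 = x, e_2, \ldots, e_n$ and decompose). Thus every $a \in \operatorname{ann}_A(x)$ can be written $a = xb$ for some $b \in \Lambda^*\RR^n$, though $b$ need not lie in $A$. The consequence I will exploit is that for any $a, a' \in \operatorname{ann}_A(x)$, writing $a = xb$, $a' = xb'$ in the ambient exterior algebra gives $aa' = \pm x^2 bb' = 0$ in $\Lambda^*\RR^n$, hence in $A$. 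In other words, $\operatorname{ann}_A(x)$ is \emph{isotropic} for the Poincar\'e pairing on $A$. Combined with the standard computation $(x)^\perp = \operatorname{ann}_A(x)$ (valid in any graded-commutative PD algebra, since $\int(xa')b = \pm\int a'(xb)$ vanishes for all $a'$ iff $xb = 0$), taking perpendiculars again yields $\operatorname{ann}_A(x)^\perp = (x)$, so the isotropy above becomes $\operatorname{ann}_A(x) \subseteq (x)$. Together with the automatic inclusion $(x) \subseteq \operatorname{ann}_A(x)$, we conclude $(x) = \operatorname{ann}_A(x)$ and $\dim A = 2\dim(x)$.

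For Poincar\'e duality on $A/(x)$: its top degree is $d-1$ and one-dimensional, since $(x)^d = A^d$ (take a PD-dual to $x$ in $A^{d-1}$) while $(x)^{d-1} = \ker(m_x|_{A^{d-1}})$ has codimension one in $A^{d-1}$. The induced cup-product pairing $(A/(x))^k \times (A/(x))^{d-1-k} \to (A/(x))^{d-1}$ is nondegenerate: given $[a] \neq 0$ in $(A/(x))^k$ we have $a \notin \operatorname{ann}_A(x)$, so $xa \neq 0$, and PD on $A$ then supplies $b \in A^{d-k-1}$ with $(xa)b \neq 0$, equivalently $[ab] \neq 0$ in $(A/(x))^{d-1}$.

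The main obstacle, to my mind, is recognizing that the proposition genuinely requires the ambient embedding in $\Lambda^*\RR^n$: for an abstract PD algebra, $\operatorname{ann}_A(x)$ can be strictly larger than $(x)$ (e.g.\ $H^*(\Sigma_g; \RR)$ for $g \geq 2$, where a generator $x \in H^1$ is annihilated by every $y \in H^1$ not in its symplectically dual pair). The identification $\operatorname{ann}_A(x) \cdot \operatorname{ann}_A(x) = 0$ extracted from the ambient exterior-algebra structure is what closes the gap.
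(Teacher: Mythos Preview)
Your proof is correct and, in my view, somewhat cleaner than the paper's. Both arguments hinge on the same structural fact---Koszul exactness of $(\Lambda^*\RR^n, m_x)$, i.e.\ that $xa=0$ forces $a=xb$ for some $b$ in the ambient exterior algebra---and both ultimately establish the key inclusion $\operatorname{ann}_A(x)\subseteq (x)$. The difference is in how that inclusion is obtained. You observe that any two elements of $\operatorname{ann}_A(x)$ multiply to zero in $\Lambda^*\RR^n$, so $\operatorname{ann}_A(x)$ is isotropic for the Poincar\'e pairing; combined with the orthogonality relation $(x)^\perp=\operatorname{ann}_A(x)$ and the involutivity of $\perp$, this gives the inclusion in one line. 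The paper instead fixes a degree $k$, sets $V=\operatorname{ann}_A(x)\cap A^k$, and builds a chain of dual subspaces $W\subset A^{n-k}$ and $V'\subset A^{k-1}$ to produce, by a dimension count, an explicit $v'\in A$ with $xv'=a$ for each $a\in V$. Your isotropy/perp packaging is shorter and makes the role of the ambient embedding more transparent; the paper's version is more hands-on and yields the surjectivity of $x\cdot{:}\,A/(x)\to (x)$ by explicit construction rather than by counting. Your final paragraph on the necessity of the embedding (the $\Sigma_g$ counterexample) is a nice addition not present in the paper.
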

By induction, one gets that $A/\Lambda^*A^1$ also satisfies Poincar\'e duality.
\begin{proof}
  Choose a basis $x_1,\ldots,x_n \in \RR^n$ such that $x=x_1$, and write
  $d\vol=x_1 \wedge \cdots \wedge x_n$.
    
  Let $\bar a \in A/(x_1)$ be a nonzero element of degree $k$.  We would like
  to show that there is a $\bar b \in A/(x_1)$ such that
  \[\bar a \bar b=\overline{x_2 \wedge \cdots \wedge x_n}.\]
  Choose a preimage $a \in A$ of $\bar a$.  Suppose first that $ax_1 \neq 0$.
  Then there is a $b \in A$ such that $ax_1b=d\vol$, and we can choose $\bar b$
  to be the equivalence class of $b$.
    
  Otherwise, $a=x_1a'$ for some $a' \in \Lambda^* \RR^n$ which is not
  necessarily in $A$.  We show that this cannot happen.  Consider the vector
  space $V$ of elements of $A^k$ that have this form.  We can choose a vector
  space $W \subset A^{n-k}$ such that the wedge product defines a nondegenerate
  bilinear form on $V \otimes W$.  In particular, multiplication by $x_1$
  defines a bijection $W \to x_1W$.  Now we can similarly build a vector space
  $V' \subset A^{k-1}$ which is Poincar\'e dual to $x_1W$.  The map $V' \to V$
  defined by multiplication by $x_1$ is injective, and hence surjective since
  $V'$ and $V$ have the same dimension.  Therefore $a$ is contained in the
  ideal $(x_1)$ and $\bar a=0$, contradicting our assumption.

  Finally, since $x_1^2=0$, multiplication by $x_1$ gives a well-defined
  surjective map from $A/(x_1)$ to $(x_1)$.  The above argument also shows that
  this map is also injective, so these two vector spaces have the same dimension.
\end{proof}
\begin{cor} \label{cor:chi=0}
  Every elliptic or quasiregularly elliptic manifold $M$ with infinite
  fundamental group has Euler characteristic zero.
\end{cor}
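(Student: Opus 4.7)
The plan is to realize $\chi(\tilde M)$ as the Euler characteristic of a cochain complex that turns out to be exact. First, by Corollary \ref{cor:pi1}, after passing to a finite cover (and, if necessary, to the orientation double cover) we may assume $\tilde M$ is orientable with $\pi_1(\tilde M) \cong \ZZ^r$ for some $r \geq 1$, since $\pi_1(M)$ is infinite. Because the Euler characteristic of a finite cover is a positive integer multiple of that of the base, it suffices to prove $\chi(\tilde M) = 0$.

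Apply Theorem \ref{thm:coh} to embed $A := H^*(\tilde M;\RR)$ as a graded subalgebra of $\Lambda^*\RR^n$. Since $\tilde M$ is closed and orientable, $A$ satisfies Poincar\'e duality. Since $r \geq 1$, $A^1 \neq 0$, so I would pick any nonzero $x \in A^1$. Because $x \in \Lambda^1\RR^n$ satisfies $x \wedge x = 0$, the degree-$(+1)$ map $\mu_x : A \to A$ given by $a \mapsto x \wedge a$ is a cochain complex with $\mu_x^2 = 0$. Its image is the principal ideal $(x)$, and $(x) \subseteq \ker \mu_x$.

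The next step is to force the complex to be exact using Proposition \ref{prop:quotient}, which gives $\dim A/(x) = \dim(x)$, hence $\dim A = 2\dim(x)$. The first isomorphism theorem applied to $\mu_x$ yields $\dim A = \dim \ker \mu_x + \dim(x)$, so $\dim \ker \mu_x = \dim(x)$. Combined with the graded inclusion $(x) \subseteq \ker \mu_x$, equality of the total dimensions forces $(x) = \ker \mu_x$ as graded subspaces of $A$. Hence $(A,\mu_x)$ is exact, and its Euler characteristic (which, for any finite cochain complex over a field, equals that of its underlying graded vector space) vanishes:
\[\chi(\tilde M) = \sum_k (-1)^k \dim A^k = \chi(A, \mu_x) = 0.\]

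Since the argument reduces to a short linear-algebra observation once Proposition \ref{prop:quotient} is invoked, there is no serious obstacle; the only point worth verifying is that equality of the total dimensions $\dim \ker \mu_x = \dim (x)$, together with the inclusion $(x) \subseteq \ker \mu_x$, automatically promotes to graded equality, which is immediate.
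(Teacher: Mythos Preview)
Your proof is correct and follows essentially the same approach as the paper: after passing to a finite cover with $\pi_1 \cong \ZZ^r$ (a step the paper leaves implicit), one picks a nonzero $x \in H^1$ and invokes Proposition \ref{prop:quotient}. The paper phrases the conclusion as ``the bijection between $A/(x)$ and $(x)$'' (the degree-shifting isomorphism given by multiplication by $x$, established in the proof of that proposition), which is precisely your statement that the complex $(A,\mu_x)$ is exact.
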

This is also proved by a different method in \cite{PG}.
\begin{proof}
  Choose an element $x \in H^1(M)$.  Then this follows immediately from the
  bijection between $A/(x)$ and $(x)$ in Proposition \ref{prop:quotient}.
\end{proof}
\begin{cor} \label{cor:4D}
  Every closed elliptic or quasiregularly elliptic $4$-manifold with infinite
  fundamental group is finitely covered by a manifold homeomorphic to $T^4$,
  $T^2 \times S^2$, or $S^1 \times S^3$.
\end{cor}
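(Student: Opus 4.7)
The plan is to reduce to a finite cover with $\pi_1 \cong \mathbb Z^k$ for $k \in \{1, 2, 4\}$, use Theorem \ref{thm:coh}, Corollary \ref{cor:chi=0}, and Poincar\'e duality to pin down the rational cohomology ring of the cover, and then invoke four-dimensional topological rigidity to upgrade this match to a homeomorphism with one of the three model spaces, possibly after a further finite cover.

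First, by Corollary \ref{cor:pi1}, after passing to a finite cover we may assume $\pi_1(M) \cong \mathbb Z^k$ (take a finite cover giving abelian $\pi_1$, then pass to the cover corresponding to its free part to kill torsion). The rank restriction shown just above forbids $k = n - 1 = 3$, and the hypothesis that $\pi_1$ is infinite forbids $k = 0$, so $k \in \{1, 2, 4\}$. From $b_1(M) = k$, Poincar\'e duality ($b_3 = k$, $b_0 = b_4 = 1$), and $\chi(M) = 0$ from Corollary \ref{cor:chi=0} one computes $b_2 = 2k - 2$, matching the rational Betti numbers of $S^1 \times S^3$, $T^2 \times S^2$, and $T^4$ in the three cases.

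Next, I would identify the full rational cohomology ring. The injection $\iota \colon H^*(M; \RR) \hookrightarrow \Lambda^* \RR^4$ of Theorem \ref{thm:coh} sends $H^1(M)$ to a $k$-plane, and the subring it generates is isomorphic to $\Lambda^* \RR^k$, of dimension $2^k$. For $k = 4$ this already exhausts $\Lambda^* \RR^4$, so $H^*(M; \RR) \cong H^*(T^4; \RR)$ and the classifying map $M \to T^4$ is a real cohomology isomorphism. For $k = 2$, let $\alpha, \beta$ span $H^1(M)$; the extra generator $s \in H^2$ can be adjusted by a multiple of $\alpha \wedge \beta$ to satisfy $s^2 = 0$ (using that $s \alpha \beta \neq 0$ by nondegeneracy of the intersection form combined with $(\alpha \beta)^2 = 0$), after which the ring matches $H^*(T^2 \times S^2; \RR)$. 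For $k = 1$, $b_2 = 0$ makes the ring structure automatic and equal to that of $S^1 \times S^3$.

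Finally, I pass from the ring identification to a topological classification. For $k = 4$ in the elliptic case, Theorem \ref{thm:TsharpS} gives that $M$ is aspherical, hence a $K(\ZZ^4, 1)$ homotopy equivalent to $T^4$; Freedman--Quinn topological rigidity for the good group $\ZZ^4$ (the Borel conjecture in dimension four, using the vanishing of $Wh(\ZZ^4)$) upgrades this to a homeomorphism $M \approx T^4$. For $k = 4$ in the QR case one combines the real cohomology isomorphism with Pankka's rational acyclicity of the universal cover and argues via topological surgery. For $k = 1$ and $k = 2$, one shows that after a further finite cover $M$ fibers over $S^1$ or $T^2$ with fiber $S^3$ or $S^2$ respectively (promoting the classifying map to a genuine sphere bundle via transversality and handle cancellation in dimension four along the Poincar\'e dual of the extra $H^3$- or $H^2$-generator), and then classifies the resulting oriented sphere bundles (by Euler class in $\ZZ$, resp.~by a $\ZZ/2$ monodromy invariant), trivializing them after one more finite cyclic cover of the base. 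The main obstacle will be this last step: producing a genuine sphere bundle structure from the cohomology-ring and fundamental-group data alone, together with the QR case for $k = 4$, where only rational information about the universal cover is currently available.
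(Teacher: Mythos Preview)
Your reduction to $\pi_1(M) \cong \ZZ^k$ with $k \in \{1,2,4\}$ and your use of $\chi(M)=0$ match the paper's opening moves exactly. From there, however, the paper takes a much shorter path than you do, and in doing so avoids every gap you flag.

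The key point you are missing is that once $\pi_1$ and $\chi$ are known, the homeomorphism classification is already available in the literature as a purely topological fact, requiring neither Theorem~\ref{thm:TsharpS} nor any hands-on construction of a fiber bundle. Specifically, the paper invokes results from Hillman's book on $4$-manifolds: for $\pi_1 \cong \ZZ^4$ and $\chi = 0$, \cite[Corollary~3.5.1]{Hill} gives asphericity directly (so $M \simeq T^4$, and then homeomorphism follows from Farrell--Jones \cite[Theorem~2.16]{FJ}); for $\pi_1 \cong \ZZ$ or $\ZZ^2$ and $\chi = 0$, \cite[Corollary~6.11.1]{Hill} shows $M$ is already a genuine $S^3$- or $S^2$-bundle over $S^1$ or $T^2$, determined by its total Stiefel--Whitney class, and a finite cover kills that class. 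In particular, your detailed identification of the real cohomology ring is not needed, and more importantly, the $k=4$ case is handled uniformly for elliptic and QR elliptic manifolds without appealing to Theorem~\ref{thm:TsharpS} or to Luisto--Pankka---so the ``main obstacle'' you identify in the QR case simply does not arise.

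Your proposed route for $k \in \{1,2\}$---promoting the classifying map to a sphere bundle by transversality and handle cancellation---is essentially trying to reprove Hillman's theorem from scratch, and as you suspect this is nontrivial; the paper wisely outsources it.
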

\begin{proof}
    Let $M$ be such a manifold.  By passing to a finite cover and applying Corollary \ref{cor:n-1} we may assume that $M$ has fundamental group $\ZZ^4$, $\ZZ^2$ or $\ZZ$.  By Corollary \ref{cor:chi=0}, $M$ has Euler characteristic zero.  If $\pi_1(M) \cong \ZZ^4$, then by \cite[Corollary 3.5.1]{Hill}, $M$ is aspherical and therefore homotopy equivalent to $T^4$.  It is then homeomorphic to $T^4$ by \cite[Theorem 2.16]{FJ}.  Otherwise, by \cite[Corollary 6.11.1]{Hill}, $M$ is determined by the total Stiefel--Whitney class of the bundle $S^2 \to M \to T^2$ or $S^3 \to M \to S^1$, respectively.  Since this class is of finite order, $M$ is finitely covered by either $T^2 \times S^2$ or $S^1 \times S^3$.
\end{proof}
This completes the classification of elliptic and quasiregularly elliptic $4$-manifolds up to finite covers.  If $\pi_1(M)$ is finite, then $M$ is finitely covered by a simply connected manifold which must come from a finite list of 19 manifolds given in \cite[Appendix A]{HP}.

\section{Invariance of ellipticity}

Here we restate and prove Theorem \ref{thm:Q-inv} and Theorem \ref{thm:TxS}.
\begin{repthm}{thm:TxS}
    Let $X=(T^{n-k} \times S^k) \mathbin{\#} \Sigma$, where $2 \leq k \leq n-1$ and $\Sigma$ is a simply connected rational homology sphere.  Then $X$ is elliptic.
\end{repthm}
\begin{proof}
  We will show that there is a map $f:T^{n-k} \times S^k \to X$ of positive
  degree.  Since $T^{n-k} \times S^k$ is elliptic, this completes the proof.
    
  First, note that $\Sigma \setminus \{\text{pt}\}$ is a simply connected space
  with finite homology groups.  By the rational Hurewicz theorem, it also has
  finite homotopy groups, in particular there is an $r$ such that the map
  $S^{n-1} \to \Sigma \setminus \{\text{pt}\}$ winding $r$ times around the
  puncture is nullhomotopic.

  Now consider a map
  \[\id \times f_r:T^{n-k} \times S^k \to T^{n-k} \times S^k\]
  where $f_r$ is a map of degree $r$.  Choose a regular value $y$ of $f_r$,
  whose preimage consists of points $y_1,\ldots,y_r$, and fix a point
  $x \in T^{n-k}$.  Choose a topological ball $B \subset S^k$ which contains all
  the $y_i$ and an $\epsi>0$ such that $B_\epsi(x) \subset T^{n-k}$ is also a
  ball.

  Without loss of generality, $X$ is built by cutting out an open ball $N$ around
  $(x,y)$ whose preimage under $\id \times f_r$ is contained in the interior of
  $B_\epsi(x) \times B$, then gluing on a punctured $\Sigma$.  Then we can
  define a map $f:T^{n-k} \times S^k \to X$ of degree $r$ as follows:
  \begin{itemize}
  \item For $(x,y) \notin B_\epsi(x) \times B$, set $f(x,y)=(x,f_r(y))$.
  \item Identify the ball $B_\epsi(x) \times B$ with the cone on its
    boundary, i.e.
    \[B_\epsi(x) \times B \cong \partial(B_\epsi(x) \times B) \times [0,1]/(\partial(B_\epsi(x) \times B) \times \{1\}).\]
    Define the map on this cone to be the concatenation of two homotopies:
    \[\id \times f_r|_{\partial(B_\epsi(x) \times B)} \xrelbar[\text{in }T^{n-k} \times S^k \setminus N]{\sim} \text{degree $r$ map to }\partial N \xrelbar[\text{in }\Sigma \setminus \text{ball}]{\sim} \text{constant}. \qedhere\]
  \end{itemize}
\end{proof}

\begin{repthm}{thm:Q-inv}
  If $M$ is a simply connected (or more generally nilpotent) closed manifold, then whether it is elliptic depends only on its rational homotopy type.
\end{repthm}
\begin{proof}
  Suppose that the $n$-manifold $M$ is elliptic, and let $f:\RR^n \to M$ be a wrapping map.  Suppose that $N$ has the same rational homotopy type as $M$.  We use Sullivan's model of rational homotopy theory (see \cite{SulLong,GrMo,FOT}).  The manifolds $M$ and $N$ have the same Sullivan minimal algebra $\mathcal A$ and minimal models $m_M:\mathcal A \to \Omega^*M$ and $m_N:\mathcal A \to \Omega^*N$.  By the shadowing principle (\cite[Theorem 4.1]{PCDF} for simply connected targets, \cite[Theorem 3.15]{Hansen} for nilpotent targets) the DGA homomorphism $f^*m_M:\mathcal A \to \Omega^*\RR^n$ extends to a DGA homomorphism $\Phi:\mathcal A \to \Omega^*(\RR^n \times [0,1])$ such that
  \begin{enumerate}[(i)]
  \item The forms in the image of $\Phi$ have bounded operator norm, and
  \item $\Phi|_{t=1}=g^*m_N$ for some Lipschitz map $g:\mathbb R^n \to N$.
  \end{enumerate}
  Now let $a \in \mathcal A$ be an element representing the fundamental
  cohomology class of $M$, and hence a multiple of that of $N$.  Then\
  $m_Ma-d\vol_M=d\alpha$ for some $\alpha \in \Omega^{n-1}(M)$, and $f$ is a
  wrapping map if and only if, by Stokes' theorem,
  \[\epsi R^n \leq \int_{B(R)} f^*d\vol_M=\int_{B(R)} f^*m_Ma-\int_{\partial B(R)} f^*\alpha=\int_{B(R)} f^*m_Ma+O(R^{n-1}).\]
  A similar condition holds for $g^*m_Na$.  Again by Stokes' theorem,
  \[\int_{B(R)} f^*m_Ma-\int_{B(R)} g^*m_Na=\int_{\partial B(R) \times [0,1]} \Phi(a)=O(R^{n-1}).\]
  It follows that $g$ is also a wrapping map.
\end{proof}

\section{Open surfaces} \label{S:open-d2}

In this section, we discuss the relationship between ellipticity and quasiregular ellipticity for surfaces.  Here quasiregular ellipticity is fully understood due to the uniformization theorem: every Riemannian surface is conformally equivalent to $\RR^2$, $\HH^2$, $S^2$, or one of their quotients.  Those that are quasiregularly elliptic are precisely those that are not conformally hyperbolic.

We will see that this is quite independent of whether the surface, if open, is Gromov-elliptic.  Thus we give examples of metrics on the plane that are conformally hyperbolic yet Gromov-elliptic, and that are parabolic (that is, conformally equivalent to the Euclidean plane) and yet not Gromov-elliptic.  In the next section, we will see that in fact every open manifold, including every open surface, has a Gromov-elliptic metric.

The main difficulty in analyzing such examples is determining whether they are Gromov-elliptic.  To determine whether a surface is quasiregularly elliptic, we use a criterion due to Ahlfors \cite{Ahl}: a metric on the plane is parabolic if and only if $\int_0^\infty \frac{dr}{L(r)}$ diverges, where $L(r)$ is the circumference of the geodesic circle of radius $r$ centered at some fixed point.  A special case of this was highlighted and given a simple proof by Milnor \cite{Mil}: a rotationally symmetric metric on the plane is parabolic if its curvature $K$ satisfies $K \geq -\frac{1}{r^2\log r}$ for large radii $r$, and hyperbolic if for some $\epsi>0$ it satisfies $K \leq -\frac{1+\epsi}{r^2\log r}$ for large radii $r$.

\subsection{A Gromov-elliptic surface which is conformally hyperbolic}
\begin{prop}
    Let $(\RR^2,g)$ be a Riemannian surface whose metric is symmetric about the origin and has curvature $K|_{\mathbf x}=-d(\mathbf x,0)^{-2}$ at every $\mathbf x \in \RR^2$.  Then $(\RR^2,g)$ is conformally hyperbolic, but Gromov-elliptic.
\end{prop}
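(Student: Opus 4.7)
The plan is to read off the metric's asymptotic form from the curvature condition, then dispatch each conclusion with its appropriate criterion. Writing $g = d\rho^2 + f(\rho)^2 d\vartheta^2$ in geodesic polar coordinates about the origin, the curvature identity $-f''/f = -1/\rho^2$ is a Cauchy--Euler equation with general solution $f(\rho) = c_1\rho^\alpha + c_2\rho^{1-\alpha}$, where $\alpha = (1+\sqrt 5)/2$ is the positive root of $\alpha^2-\alpha-1=0$. Positivity and growth of the circumference force $c_1 > 0$, so $f(\rho) \asymp \rho^\alpha$ as $\rho \to \infty$; hence $L(\rho) \asymp \rho^\alpha$ and $\vol(B_g(0,\rho)) \asymp \rho^{\alpha+1}$. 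Conformal hyperbolicity is immediate from Milnor's criterion quoted above: since $\alpha > 1$, the inequality $-1/\rho^2 < -(1+\epsi)/(\rho^2\log\rho)$ holds for all large $\rho$ (equivalently, $\int_1^\infty d\rho/L(\rho)$ converges).

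For Gromov-ellipticity, the plan is to build a Lipschitz wrapping map $F : \RR^2 \to (\RR^2, g)$ by a dyadic tiling of the target. Let $A_k = \{2^k \le \rho \le 2^{k+1}\}$. Because $\rho^\alpha$ is uniformly quasi-constant on $A_k$, there is a bilipschitz equivalence (with constant independent of $k$) between $A_k$ and the Euclidean rectangle $S_k$ of dimensions $2^k \times 2\pi \cdot 2^{k\alpha}$, of area $\asymp 2^{k(\alpha+1)}$. I place disjoint bent copies of $S_0, \dots, S_K$ in the standard $\RR^2$: each $S_k$ is first rolled, by a uniformly bilipschitz map, into an Archimedean spiral of pitch $2^k$ and outer radius $\asymp 2^{k(\alpha+1)/2} \asymp \sqrt{\text{area}(S_k)}$. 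Since the areas form a geometric series, the whole assembly fits inside a disk $B_{R_K} \subset \RR^2$ with $R_K \asymp 2^{K(\alpha+1)/2}$. On each spiraled tile, $F$ is the bilipschitz identification with $A_k$; on the complement $F \equiv p_0$ for a fixed basepoint, with a linear interpolation in a collar of width $\asymp 2^k$ around $S_k$. The resulting $F$ is globally Lipschitz and satisfies
\[
\int_{B_{R_K}} F^* d\vol_M \;\gtrsim\; \sum_{k=0}^K \vol(A_k) \;\asymp\; 2^{K(\alpha+1)} \;\asymp\; R_K^2,
\]
so $\limsup_{R \to \infty} R^{-2} \int_{B_R} F^* d\vol_M > 0$.

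The main obstacle is the packing-with-collars step: the spiraled rectangles together with their Lipschitz collars must lie disjointly inside a disk whose radius is only $\asymp \sqrt{\text{total area}}$. This is feasible because $2^{k(\alpha+1)/2} \gg 2^k$, so the diameter of the $k$-th spiral dominates the width of its collar; the spirals can therefore be arranged in concentric shells of geometrically growing radii, each of which comfortably accommodates one spiral and its collar. Combined with the uniform bilipschitz control of both the annulus-to-rectangle and rolling maps, this yields a globally Lipschitz $F$ whose Lipschitz constant is independent of $K$, so the asymptotic degree bound passes to the continuous limit $R \to \infty$.
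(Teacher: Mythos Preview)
Your ODE analysis and the conformal-hyperbolicity argument are correct. The wrapping-map construction, however, does not work. Because $F \equiv p_0$ on the complement of the spirals and their collars, for any radius $R$ with $\partial B_R$ lying in that complement (for instance, between consecutive shells) one has $F|_{\partial B_R} \equiv p_0$. The target $(\RR^2,g)$ is contractible, so $d\vol_g = d\beta$ for a global $1$-form $\beta$, and Stokes gives $\int_{B_R} F^* d\vol_g = \int_{\partial B_R} F^*\beta = 0$. Concretely: the collar along the outer long edge of $S_k$ interpolates from $\rho = 2^{k+1}$ down to $p_0$ and therefore sweeps out the whole disk $\{\rho \le 2^{k+1}\}$ with orientation \emph{opposite} to that on $S_k$, while the inner collar sweeps $\{\rho \le 2^k\}$ with the same orientation; the three signed contributions (strip plus two collars) cancel exactly. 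Thus your claimed bound $\int_{B_{R_K}} F^* d\vol_M \gtrsim \sum_k \vol(A_k)$ is false, and indeed $\limsup_{R\to\infty} R^{-2}\int_{B_R} F^* d\vol_g = 0$ for this $F$. No rearrangement of the spirals fixes this as long as $F$ is constant outside a bounded set.

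The paper's construction is organized around a different idea that sidesteps this obstruction. Fix a geodesic ray $\gamma$ from the origin. At $\gamma(R)$ the curvature on the $g$-ball of radius $R/2$ is pinched in $[-4R^{-2},-R^{-2}/4]$, so by Toponogov comparison the exponential map $\varphi_R$ there is uniformly Lipschitz from a Euclidean disk of radius $\asymp R$ and is everywhere area-nondecreasing. The wrapping map uses these exponential charts on a disjoint family of Euclidean balls of geometrically growing radii placed along the $y$-axis, sends the $y$-axis itself to $\gamma$, and on the remainder projects (Lipschitzly) to the boundary of the already-defined set. Two features are decisive: on the complement $F^*d\vol_g \equiv 0$ because the map factors through a $1$-complex, so there is no negative contribution to cancel the positive one from the balls; and $F(\partial B_R)$ reaches $g$-distance $\asymp R$ along $\gamma$, so the Stokes obstruction above does not apply.
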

One can define a metric on the plane with prescribed curvature $K(r)$ in polar coordinates by $dr^2+r^2\ph(r)^2d\theta^2$, where $\ph$ is the solution to the initial value problem $\ddot\ph=-K\ph$, $\ph(0)=1$.  When $K(r)$ is nonpositive, this is a well-defined Riemannian metric for all $r$.

Another way of getting a surface with these two properties would be to apply the construction in \S\ref{S:anyopen} to a punctured torus and take the universal cover of the resulting space.
\begin{proof}
    The surface is conformally hyperbolic by Milnor's result.

    Fix a unit-speed geodesic $\gamma$ starting at the origin, for example parametrizing the $y$-axis.  We first understand the exponential map $\ph_R$ at a point $x=\gamma(R)$, applied to a disk of radius $R/2$ around the origin.  The curvature on the image of this map is pinched between $-4R^{-2}$ and $-R^{-2}/4$.  Then by Toponogov's theorem, the exponential map from a Euclidean disk of radius $R/2$ to this disk is $4\sinh(1)$-Lipschitz.  On the other hand, the map clearly expands areas everywhere.

    We define a wrapping map $f:(\RR^2,g_{\text{Eucl}}) \to (\RR^2,g)$ as follows:
    \begin{itemize}
    \item For $n=1,2,\ldots$, define $f|B((0,2^{2n}),2^{2n-1})$ by 
      \[f(x,y)=\ph_{2^{2n-1}}(x,y-2^{2n}).\]
    \item For all points on the $y$-axis, send $(0,y) \mapsto \gamma(y)$.
    \item For all other points, compose with a Lipschitz projection to the boundary of the region on which $f$ is defined thus far.
    \end{itemize}
    Clearly the resulting map is Lipschitz and has positive asymptotic degree.
\end{proof}

\subsection{A parabolic surface which is not Gromov-elliptic}
\begin{prop}
    Let $\Sigma$ be the graph in $\RR^3$, with the induced metric, of a smooth function $h:\RR^2 \to \RR$ defined by
    \[h(x,y)=\log(\sqrt{a^2+b^2})\ph\bigl(d\bigl((x,y),\bigl(a+{\textstyle \frac12},b+{\textstyle \frac12}\bigr)\bigr)\bigr) \qquad a,b \in \ZZ,\:(x,y) \in [a,a+1] \times [b,b+1],\]
    where $\ph:[0,1] \to [0,1]$ is a smooth, non-increasing bump function supported on $[0,\frac{1}{2})$ and equal to $1$ on $[0,\frac{1}{4}]$.  Then $\Sigma$ is parabolic, but not Gromov-elliptic.
\end{prop}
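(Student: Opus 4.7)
The plan is to establish the parabolicity and non-Gromov-ellipticity assertions separately.

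For parabolicity, I would apply Ahlfors' criterion: a simply connected Riemannian surface is parabolic iff $\int^\infty dr/L(r) = \infty$, where $L(r)$ is the length of the $\Sigma$-geodesic circle of radius $r$. First I estimate $\vol(B_\Sigma(r)) \lesssim r^2 \log r$ by pulling back via the $1$-Lipschitz projection $\pi \colon \Sigma \to \RR^2$ and noting that each of the $O(r^2)$ bumps in $B_{\RR^2}(r)$ contributes $O(\log r)$ to $\int \sqrt{1+|\nabla f|^2}$. By the coarea formula this gives $L(r) \lesssim r \log r$ on a positive-density set of radii, and $\int dr/(r \log r) \sim \log \log r \to \infty$.

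For non-Gromov-ellipticity, suppose toward a contradiction that $F \colon \RR^2 \to \Sigma$ is $L$-Lipschitz with asymptotic degree at least $c > 0$. Writing $F = (g, f \circ g)$ with $g = \pi \circ F$ and using that $F$ is $L$-Lipschitz as a map to $\RR^3$, both $g$ and $f \circ g$ are $L$-Lipschitz on $\RR^2$. The Jacobian identity $|\det dF|^2 = (1+|\nabla f|^2)(g)\,(\det dg)^2 \leq L^4$ forces $|\det dg|$ to be small wherever $g$ reaches the top of a tall bump: specifically $|\det dg| \lesssim L^2/\log|(a,b)|$ on the preimage of the top of the bump at $(a,b)$. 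Splitting $\int_{B_R} F^* d\vol_\Sigma$ into contributions from preimages of the flat region and of each bump, I aim to show that both parts are $o(R^2)$.

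For the bump contribution, combine the Jacobian bound with the Lipschitz constraint $|g(x)| \leq L|x| + O(1)$ (so only bumps with $|(a,b)| \lesssim LR$ contribute), the fact that preimages of distinct bumps are disjoint subsets of $B_R$, and the observation that the $\Sigma$-diameter of the bump at $(a,b)$ is $\gtrsim \log|(a,b)|$, so its preimage in $B_R$ must have $\RR^2$-diameter at least $\log|(a,b)|/L$ whenever $F$ covers that bump with positive degree. A dyadic optimization over the size of $|(a,b)|$ then bounds the total bump contribution by $O(R^2/\log R)$.

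For the flat contribution, note that the flat region of $\Sigma$ is isometric to $\RR^2 \setminus \bigcup D_{ab}$, whose fundamental group is the free group on countably many generators. Its universal cover $\tilde M$ is therefore a simply connected Gromov-hyperbolic surface, so it carries a linear isoperimetric inequality. Lifting the restriction of $g$ to (the components of) the preimage of the flat region and applying the standard Varopoulos--Gromov-style isoperimetric argument in $\tilde M$ bounds the mass of the pushforward current by the linear order of its boundary mass, which is $O(R)$. Together these give $\int_{B_R} F^* d\vol_\Sigma = o(R^2)$, contradicting $c > 0$. The hardest step will be justifying the lift to $\tilde M$, since the preimage of the flat region is only a closed subset of $\RR^2$ whose topology interlocks nontrivially with that of the bumpy preimages.
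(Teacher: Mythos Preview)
Your parabolicity argument via Ahlfors' criterion is essentially the paper's, and is fine.

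For non-ellipticity there is a genuine gap, and it is not merely the lifting technicality you flag at the end: both halves of your decomposition are secretly governed by the same uncontrolled quantity, namely the local degrees $d_{ab}$ of $g=\pi\circ F$ over the spike centers (equivalently, the winding numbers of $g|_{\partial B_R}$ around $(a+\tfrac12,b+\tfrac12)$). Since $\pi$ is a diffeomorphism one has, up to an $O(R\log R)$ error from squares meeting $g(\partial B_R)$,
\[
\int_{B_R} F^*d\vol_\Sigma \;=\; \sum_{a,b} d_{ab}\cdot \operatorname{area}_\Sigma(\text{square}_{a,b}),
\]
so the whole problem is to bound $\sum d_{ab}$. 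Your bump argument does not do this: the Jacobian identity only reproduces the trivial bound $J_F\le L^2$ (and $|\nabla f|$ is large on the \emph{sides} of the bump, not the top), while the diameter observation bounds only the \emph{number of distinct spikes hit}, not the degree over each---a single spike can be covered with arbitrarily high degree without enlarging its preimage's diameter. No dyadic sum closes this. Your flat argument also does not do this: the lift $g|_\Omega\to\tilde M$ fails to exist precisely because a loop around a bump-preimage maps to $[\partial D_{ab}]^k\ne 1$ in $\pi_1(M)$, and cutting $\Omega$ to make it simply connected adds boundary mass proportional to $\sum|d_{ab}|$, so the claimed $O(R)$ boundary mass begs the question.

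The paper supplies exactly the missing global ingredient. After extending $F|_{B_R}$ so that the boundary lands in the integer grid $G^{(1)}$, it introduces a \emph{turning number} for loops in $G^{(1)}$ (left quarter-turns minus right), which is subadditive under concatenation and bounded by length. Since $F|_{\partial B}$ is homotopic, in the complement of the spike centers, to a concatenation of the small loops $\gamma_i$ encircling individual spikes, one obtains
\[
\sum_i t(\gamma_i)\;\le\; t(F|_{\partial B}) + \#\{\gamma_i\} \;=\; O(R) + \#\{\gamma_i\}.
\]
Your diameter observation now does real work, but in this different place: each nontrivial $\gamma_i$ encloses a preimage of a spike top and hence a domain ball of radius $\gtrsim\log R$, forcing $\#\{\gamma_i\}=O(R^2/\log^2 R)$. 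This bounds $\sum_{ab} d_{ab}$, and multiplying by the maximal spike area $O(\log R)$ gives $\int_{B_R}F^*d\vol_\Sigma=O(R^2/\log R)$.
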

Informally, $\Sigma$ is a plane with a lattice of spikes whose height is logarithmic in the distance from the origin.  The reason this space is not Gromov-elliptic is roughly that any map of positive asymptotic degree would be forced to have local winding number $\sim \log R$ around many different spikes; however, these local winding numbers add up to a global ``turning number'', forcing the restriction of such a map to a circle of circumference $R$ to make $\gtrsim R^2/\log R$ full turns each of which encompasses at least one spike.  This is an obvious impossibility.
\begin{proof}
  The parabolicity of $\Sigma$ can be seen using Ahlfors' criterion by
  considering its volume growth.  Clearly the $R$-ball in $\Sigma$ is contained
  in the image of the Euclidean $R$-ball.  The area of the image of the
  square $[a,a+1] \times [b,b+1]$ is bounded by the sum of its projections in
  the three coordinate directions.  These projections are contained in five
  sides of a rectangular prism of height $\log(\sqrt{a^2+b^2})$, giving a bound
  of $4\log(\sqrt{a^2+b^2})+1$.  Therefore
  \[\vol B_R(\Sigma) \leq \pi R^2+8\pi\int_0^R x\log(x+2)dx \leq 4\pi R^2\log(R+2).\]
  It follows that, for at least half of the $t \in [0,R]$, the circumference of
  the geodesic circle at $t$ is bounded by $4\pi R\log(R+2)$.  Since
  \[\int (R\log R)^{-1}dR=\log\log R,\]
  the integral of the circumferences diverges.

  To see that $\Sigma$ cannot be Gromov-elliptic, consider a $1$-Lipschitz map
  $f_R:B(R) \to \Sigma$ such that $|f_R(0)|<R$.  We will show that any such
  function has degree $O(R^2/\log R)$.  Now if $f_\infty:\RR^2 \to \Sigma$ is a
  $1$-Lipschitz map, for large enough $R$ the restriction $f_R=f_\infty|_{B(R)}$
  satisfies the assumption $|f_R(0)|<R$.  Thus $f_\infty$ cannot have positive
  asymptotic degree.

  Identify points in $\Sigma$ with their projection to the plane.  We can
  assume that $f_R$ is smooth and transverse to the curves
  \[C_{a,b}=\Bigl\{(x,y \in [a,a+1] \times [b,b+1] : \ph\bigl(d\bigl((x,y),\bigl(a+{\textstyle \frac12},b+{\textstyle \frac12}\bigr)\bigr)\bigr)=\frac{1}{2}\Bigr\}\]
  for every $a,b \in \ZZ$.

  Let $G$ be the CW complex whose cells are vertices, edges, and squares of the integer grid in $\RR^2$.
  \begin{lem}
    The function $f_R$ extends to a $\pi\sqrt 2$-Lipschitz function $f:B(R+\log R+2) \to \Sigma$ such that $f|_{\partial B(R+\log R+2)}$ maps to the set
    \[G^{(1)}=\{x,y \in \RR^2 : x \in \ZZ\text{ or }y \in \ZZ\}.\]
  \end{lem}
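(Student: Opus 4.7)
The plan is to extend $f_R$ across the annulus $A := B(R+\log R+2) \setminus B(R)$ by sliding each boundary value $q(\theta) := f_R(Re^{i\theta})$ to $G^{(1)}$ along a canonical arc-length-parametrized descent path in $\Sigma$. First I would set up notation: let $p(\theta) \in \RR^2$ be the planar projection of $q(\theta)$. Since $f_R$ is $1$-Lipschitz with $|f_R(0)| < R$, we have $|p(\theta)| < 2R$, so the unit square $[a,a+1]\times[b,b+1]$ containing $p(\theta)$ has $\sqrt{a^2+b^2} \leq 2\sqrt 2 R + O(1)$ and hence spike height $h(\theta) \leq \log R + O(1)$. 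By a generic perturbation preserving the $1$-Lipschitz property, I would further assume $p$ misses all square centers $(a+\tfrac12, b+\tfrac12)$.

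Next, for each $\theta$ let $C(\theta)$ be the center of the square containing $p(\theta)$. I would define $\gamma_\theta$ as the path in $\Sigma$ whose planar projection is the outward ray from $p(\theta)$ in direction $(p(\theta)-C(\theta))/|p(\theta)-C(\theta)|$, traversed until it reaches $G^{(1)}$. If $p(\theta)$ lies on a spike, $\gamma_\theta$ first descends the spike to its foot at planar radius $\tfrac12$ and then crosses the remaining flat annulus. Its $\Sigma$-length is bounded by
\[
L(\theta) \leq \int_0^{1/2}\sqrt{1+h(\theta)^2\phi'(\sigma)^2}\,d\sigma + \frac{\sqrt 2 - 1}{2} \leq h(\theta) + \frac{\sqrt 2}{2} \leq \log R + 2 =: T,
\]
using $\sqrt{1+a^2}\leq 1+|a|$ and $\int_0^{1/2}|\phi'|=\phi(0)=1$. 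Parametrizing $\gamma_\theta$ by arc length on $[0, L(\theta)]$ and extending it constantly on $[L(\theta), T]$, I would set $f(re^{i\theta}) := \gamma_\theta(r-R)$ on $A$. This continuously extends $f_R$ and sends $\partial B(R+T)$ into $G^{(1)}$ by construction.

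The main obstacle will be verifying the Lipschitz bound. The radial derivative has norm at most $1$ by arc-length parametrization. For the angular direction, I would combine $|\partial_\theta q|_\Sigma \leq R$ with a case analysis of how $\gamma_\theta$ depends on $\theta$: at grid-line crossings where $C(\theta)$ jumps to an adjacent center, $L(\theta) \to 0$ and continuity is preserved; away from such crossings, $\gamma_\theta$ varies smoothly in $\theta$. The hardest case is when $p(\theta)$ lies on a spike close to—but not exactly at—a square's center, since then the radial direction $(p-C)/|p-C|$ can rotate rapidly; the assumed transversality of $f_R$ to the level curves $C_{a,b}$ together with the perturbation away from centers must be used to control the angular speed of $\hat u$ in this regime. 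A careful case-by-case estimate should then yield $|\partial_\theta f| \leq \pi\sqrt 2 \cdot r$, giving the total Lipschitz constant $\pi\sqrt 2$ on the Euclidean annulus, with the $\sqrt 2$ reflecting the diagonal geometry of the unit square and $\pi$ arising from integrated angular variation across grid-line transitions.
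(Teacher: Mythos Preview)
Your outline has a genuine gap at exactly the point you flag as ``the hardest case.''  When $p(\theta)$ lies on the flat top at planar distance $\epsi$ from the center $C(\theta)$, the unit vector $\hat u(\theta)=(p-C)/|p-C|$ rotates with angular speed of order $|\partial_\theta p|/\epsi \leq R/\epsi$.  Along your radial path, the point at planar radius $\rho$ from $C(\theta)$ then moves with angular speed $\sim \rho R/\epsi$; at the base of the spike ($\rho\approx 1/2$) this is $\sim R/(2\epsi)$, which is unbounded as $\epsi\to 0$.  Neither of the tools you invoke helps: a generic perturbation only forces $p$ to miss the exact centers, not to stay a fixed positive distance from them, and transversality to the level curves $C_{a,b}$ (which sit at radius between $1/4$ and $1/2$) says nothing about how close $p$ can come to the center.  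So the angular Lipschitz bound cannot be obtained from your construction.

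The paper's proof sidesteps this singularity by treating the flat top separately.  It first performs, for each arc of $\partial B(R)$ mapping into the flat disk $U_{a,b}=B_{1/4}(a+\tfrac12,b+\tfrac12)$, a \emph{linear} homotopy (in that flat disk) from $f_R$ to the constant-speed shortest arc of $\partial U_{a,b}$ between the endpoints.  This step is $(\pi/2)$-Lipschitz and requires no reference to the center, so there is no blow-up.  Only after the curve has been pushed onto $\partial U_{a,b}$ does the paper slide points down the spike (``squeegee'') and then radially out to the grid, picking up the factors $\pi$ and $\sqrt 2$ respectively.  If you want to rescue your approach, you would need an analogue of this first step before invoking radial descent.
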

  \begin{proof}
    It suffices to build a $\pi\sqrt 2$-Lipschitz homotopy $h:\partial B(R) \times [0,\log R+2] \to \Sigma$ from $f_R|_{\partial B(R)}$ to a map with the desired property.  The domain of the homotopy is identified with the annulus in the obvious way.

    We build this homotopy in three steps.  First, we homotope $f_R$ out of the ``flat top'' $U_{a,b}=B_{1/4}(a+\frac{1}{2},b+\frac{1}{2})$ of each spike.  Since $U_{a,b}$ is an open set, its preimage under $f_R|_{\partial B(R)}$ is a disjoint union of open arcs.  On each such arc $A=(\alpha,\beta)$, we perform a linear homotopy from $f_R|_A$ to the constant-speed shortest path between the endpoints $f(\alpha)$ and $f(\beta)$ along the boundary of $U_{a,b}$.  This homotopy is $\frac\pi2$-Lipschitz and takes time $1/2$; call its end result $g:\partial B(R) \to \Sigma$.

    Next we push the curve down the side of the spike, as with a squeegee: a point starts moving once the homotopy reaches down to its height.  This homotopy takes time $<\log(2R)+\frac{1}{4}<\log R+1$ and is $\pi$-Lipschitz, since distances increase by a factor of at most $2$ from $g$.

    Finally, we push points from the circle of radius $1/2$ around $(a+\frac{1}{2},b+\frac{1}{2})$ linearly out to the square; at this point we accrue the factor of $\sqrt2$, and the homotopy takes time $\sqrt 2-1$.
  \end{proof}
  In particular, every $f^{-1}(C_{a,b})$ is a closed curve and $f$ has the same local degree over every point in a grid square.
    
  The domain can then be rescaled to make the function once again $1$-Lipschitz.  We will show that the resulting function satisfies
  \[\int_{B(\pi\sqrt2(R+\log R+2))} f^*d\vol_\Sigma=O(R^2/\log R),\]
  and therefore the original function $f_R$ likewise has degree
  $O(R^2/\log R)$.
    
  Now write $B=B(\pi\sqrt 2(R+\log R+2))$.
  \begin{lem} \label{lem:loc-degs}
    The sum of the local degrees of $f$ over points in
    \[\Bigl(\ZZ^2+\Bigl(\frac{1}{2},\frac{1}{2}\Bigr)\Bigr) \cap [-\sqrt{R},\sqrt{R}]^2\]
    is $O(R^{3/2})$.
  \end{lem}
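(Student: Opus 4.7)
The plan is to identify each local degree with a planar winding number and then apply the Banchoff--Pohl inequality. Write $\gamma = f|_{\partial B}$; by construction $\gamma$ takes values in $G^{(1)}$. Since $\Sigma$ is the graph of a nonnegative smooth function over $\RR^2$, the vertical projection $\Sigma \to \RR^2$ is a homeomorphism whose restriction to the zero set of the function (which contains $G^{(1)}$) is an isometry onto the planar grid. Denote by $\bar\gamma$ the closed curve in $\RR^2$ obtained by composing $\gamma$ with this projection; then $\bar\gamma$ lies in the planar grid lines and
\[\mathrm{length}(\bar\gamma) \;=\; \mathrm{length}(\gamma) \;\leq\; 2\sqrt{2}\,\pi^2(R+\log R+2) \;=\; O(R).\]
For each tip $c\in\Sigma$ (which lies at positive height and so is not in the image of $\gamma$), the local degree $\deg(f,c)$ equals the winding number of $\gamma$ around $c$ in $\Sigma$; since the vertical projection is an orientation-preserving homeomorphism, this agrees with the planar winding number $w_{\bar\gamma}(\bar c)$ of $\bar\gamma$ around the projected tip $\bar c$.

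The key tool is the Banchoff--Pohl inequality, which states that any closed rectifiable curve $\eta$ of length $L$ in $\RR^2$ satisfies
\[\int_{\RR^2} w_\eta(p)^2\,dp \;\leq\; \frac{L^2}{4\pi}.\]
Because $\bar\gamma$ is contained in the $1$-skeleton $G^{(1)}$, the winding-number function $w_{\bar\gamma}$ is constant on each open unit square of the grid, with its value on the square centered at a tip $c$ equal to $\deg(f,c)$. Let $U$ be the union of those open unit squares whose centers lie in $[-\sqrt R, \sqrt R]^2$; then $U \subset [-\sqrt R - 1, \sqrt R + 1]^2$ has area $O(R)$, and
\[\sum_{c\in (\ZZ^2+(\tfrac12,\tfrac12))\,\cap\, [-\sqrt R, \sqrt R]^2} \deg(f,c) \;=\; \int_U w_{\bar\gamma}(p)\,dp.\]
Cauchy--Schwarz followed by Banchoff--Pohl then yields
\[\Bigl|\int_U w_{\bar\gamma}\Bigr| \;\leq\; |U|^{1/2}\Bigl(\int_U w_{\bar\gamma}^2\Bigr)^{1/2} \;\leq\; O(\sqrt R)\cdot \frac{L}{2\sqrt{\pi}} \;=\; O(R^{3/2}),\]
as claimed.

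The one delicate step is the identification, in the first paragraph, of the topological local degree at a spike tip $c$ with the planar winding number around $\bar c$. This rests on two facts: $\Sigma$ is globally homeomorphic to $\RR^2$ via the vertical projection, and $\gamma$ sits at height $0$ and thus avoids every tip, so that the standard identity \emph{local degree equals boundary winding number} transports cleanly from $\Sigma$ to $\RR^2$ through the projection. Once this identification is in place, the remainder of the argument is a mechanical application of Banchoff--Pohl together with Cauchy--Schwarz.
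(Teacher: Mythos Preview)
Your argument is correct and takes a genuinely different route from the paper. The paper works combinatorially: it passes to the cellular cycle $z\in C_1(G)$ carried by $f|_{\partial B}$, lets $c$ be the unique $2$-chain filling $z$, and bounds the mass of $c$ inside $[-\sqrt R,\sqrt R]^2$ by slicing into level sets $c(i)=\{\alpha:c_\alpha\ge i\}$ and applying the discrete isoperimetric inequality $\#\partial c(i)\ge 4\sqrt{\#c(i)}$. Summing and capping each level at $4R$ yields $O(R^{3/2})$. Your proof replaces this hands-on layer-cake argument by a single invocation of the Banchoff--Pohl inequality $\int w_{\bar\gamma}^2\le L^2/4\pi$ followed by Cauchy--Schwarz over the region $U$ of area $O(R)$. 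Both arguments are at heart isoperimetric; yours is shorter and cites a named theorem, while the paper's is self-contained and entirely elementary.

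One small remark. The paper's proof in fact bounds the \emph{mass} $\sum_c|\deg(f,c)|$ over the inner square (this is what is subsequently needed, since the grid squares have varying areas $A(c)$ and one wants $\sum_c \deg(f,c)\,A(c)\le(\max A)\sum_c|\deg(f,c)|$). As written, your Cauchy--Schwarz step bounds only $\bigl|\int_U w_{\bar\gamma}\bigr|$, i.e.\ the signed sum. But the same inequality applied to $|w_{\bar\gamma}|$ gives
\[
\int_U |w_{\bar\gamma}|\;\le\;|U|^{1/2}\Bigl(\int_U w_{\bar\gamma}^2\Bigr)^{1/2}\;=\;O(R^{3/2}),
\]
so you recover the stronger $L^1$ bound at no extra cost. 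With that one-character change your proof delivers exactly what the paper proves and uses.
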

  \begin{proof}
    Consider the homology class induced by $f(\partial B)$ in $G^{(1)}$.  This induces a unique cellular cycle $z \in C_1(G)$; the mass of $z$ (that is, the sum of the coefficients) is $O(\operatorname{length}(f(\partial B)))=O(R)$.  Let $c=\sum_{\text{cells }\alpha}c_\alpha\alpha$ be the (likewise unique) $2$-chain filling $z$; we must estimate the portion of the mass of $c$ contained in $[-\sqrt{R},\sqrt{R}]^2$.

    Let $c(i)=\{\text{2-cells }\alpha : c_\alpha \geq i\}$.  Then $\sum_{i=1}^\infty \#c(i) \geq \mass(c)$ and
    \[\mass(z) \geq \sum_{i=1}^\infty \#\partial c(i) \geq \sum_{i=1}^\infty 4\sqrt{\#c(i)},\]
    where the last inequality holds by the isoperimetric inequality for $(\RR^2,\ell^\infty)$.

    Now let $V$ be the portion of the mass of $c$ contained in $[-\sqrt{R},\sqrt{R}]^2$.  We have
    \[V \leq \sum_{i=1}^\infty \min\{\# c(i),4R\} \leq 2\sqrt{R}\sum_{i=1}^\infty \sqrt{\# c(i)} \leq \frac{1}{2}\sqrt{R}\mass(z)=O(R^{3/2}). \qedhere\]
  \end{proof}

  Define the \emph{turning number} $t(\gamma)$ of a non-backtracking piecewise
  linear path $\gamma$ in the plane to be $\frac{1}{2\pi}$ times the sum of its
  turning angles in $(-\pi,\pi)$; intuitively, this is the signed number of
  360\textdegree\ turns made by a person traveling along the path.  We can
  extend the definition to paths with backtracks by defining $t(\gamma)$ to be
  the turning number of the path with all backtracks pruned away.  Then the
  following is clear:
  \begin{lem} \label{lem:turning-additive}
    If $\alpha$ and $\beta$ are two piecewise linear loops, then
    \[\lvert t(\gamma)-t(\alpha)-t(\beta) \rvert \leq 1\]
    where $\gamma$ is any \emph{concatenation} of $\alpha$ and $\beta$, that
    is, any loop that traverses $\alpha$, then travels along a piecewise linear
    path $\delta$ from $\alpha$ to $\beta$, then traverses $\beta$, and then
    travels back along $\delta$.  The inequality does not depend on the path
    $\delta$.
  \end{lem}
  In particular, for a non-backtracking path in $G^{(1)}$, the turning number is
  \[\frac{\#(\text{left turns})-\#(\text{right turns})}{4}.\]
  Every free loop $\gamma:S^1 \to G^{(1)}$ is homotopic to a unique
  non-backtracking (``reduced'') loop, so $t(\gamma)$ is homotopy-invariant.
  
  Now we slightly modify the concept of turning number to be well-defined for
  free homotopy classes of loops in $G^{(1)} \cup [-\sqrt{R},\sqrt{R}]^2$,
  which we denote by $t_{\operatorname{sq}}$.  Given such a loop $\gamma$, we
  produce a canonical representative of the free homotopy class by replacing
  each section that traverses $[-\sqrt{R},\sqrt{R}]^2$ by a pair of straight
  segments meeting at the origin, then pruning away backtracks.  Note that this
  choice of canonical representative commutes with concatenation as defined in
  Lemma \ref{lem:turning-additive}.  Then $t_{\operatorname{sq}}(\gamma)$ is
  defined to be the turning number of this representative and Lemma
  \ref{lem:turning-additive} holds on the level of free homotopy classes.
  Moreover:
  \begin{lem} \label{lem:turning}
    For any loop $\gamma:S^1 \to G^{(1)} \cup [-\sqrt{R},\sqrt{R}]^2$, $t_{\operatorname{sq}}(\gamma) \leq \operatorname{length}(\gamma)/4$.
  \end{lem}
  \begin{proof}
    Every segment of length $1$ in $G^{(1)}$ contributes at most $1/4$ to the
    turning number.  Every path between distinct vertices of
    $[-\sqrt{R},\sqrt{R}]^2$, which has length at least $1$, likewise
    contributes at most $1/4$.
  \end{proof}
  \begin{lem} \label{lem:many-turns}
    The sum of the local degrees of $f$ over points in $\ZZ^2+(\frac{1}{2},\frac{1}{2})$ is $O(R^2/\log^2 R)$.
  \end{lem}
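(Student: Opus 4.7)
The plan is to split $\sum_{(a,b)} d_{a,b}$ at $r_{a,b} := \sqrt{a^2+b^2} = \sqrt R$. Spikes with $r_{a,b} \leq \sqrt R$ contribute $O(R^{3/2})$ by Lemma \ref{lem:loc-degs}, and this is $o(R^2/\log^2 R)$. So the heart of the matter is to bound the sum over spikes with $r_{a,b} > \sqrt R$, for which the intrinsic height $h_{a,b} := \log r_{a,b}$ is at least $\tfrac12 \log R$.

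For these tall spikes I would use an area argument sensitive to their height. The first observation is that in $\Sigma$, the intrinsic distance from the tip of a spike to any point outside that spike's grid square is at least $h_{a,b}$. Since $f$ is $1$-Lipschitz, for any preimage $p$ of the tip of the spike at $(a+\tfrac12, b+\tfrac12)$, the open ball of radius $h_{a,b}$ around $p$ in the domain is sent entirely into that spike. Balls associated with preimages of different spikes are trivially disjoint.

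The crux is then a refined per-spike area bound of the form
\[
  \operatorname{Area}\bigl(f^{-1}(\text{spike at }(a+\tfrac12,b+\tfrac12))\bigr) \;\gtrsim\; d_{a,b}\, h_{a,b}^2,
\]
which is an extra factor of $h_{a,b}$ beyond what the area formula applied to a component of degree $k$ trivially gives ($k \cdot h_{a,b}$). Granting this, summing over distant spikes yields
\[
  c_0\sum_{r_{a,b} > \sqrt R} d_{a,b}\, h_{a,b}^2 \;\leq\; \operatorname{Area}(B) \;=\; O(R^2),
\]
and dividing by $h_{a,b}^2 \geq \tfrac14\log^2 R$ gives $\sum_{r_{a,b} > \sqrt R} d_{a,b} = O(R^2/\log^2 R)$.

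The hard part will be establishing the per-spike refinement. One route is to show that inside each connected component of $f^{-1}(\text{spike})$, the preimages of the tip are pairwise separated by at least $h_{a,b}$ in the domain, so that disjoint balls of radius $h_{a,b}/2$ around them contribute area $\gtrsim d_{a,b} h_{a,b}^2$. This separation---which rules out tight clustering of tip preimages---is where the specific shape of the spike (a narrow column of height $h_{a,b}$ capped by a bounded plateau) must be exploited to control the local branched structure of $f$; a smoothing or transversality argument may be needed to make this precise. An alternative I would try is to couple this geometric input with the turning-number bound $t_{\operatorname{sq}}(f|_{\partial B'}) \leq O(R)$ from Lemma \ref{lem:turning}, which constrains the cellular structure of the $2$-chain filled by $f|_{\partial B'}$ and so limits how the local degrees can be distributed.
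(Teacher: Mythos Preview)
Your primary route has a genuine gap: the per-spike area bound $\operatorname{Area}\bigl(f^{-1}(\text{spike}_{a,b})\bigr) \gtrsim d_{a,b}\,h_{a,b}^2$ is false, and the separation claim you offer in its support fails.  Model the spike as a cylinder of circumference $O(1)$ and height $h$ capped by a bounded plateau, and map a Euclidean disk of radius $k+h$ onto it by sending the annulus $k \le r \le k+h$ to the column via $(r,\theta)\mapsto (k\theta \bmod 2\pi,\; h-(r-k))$ and the inner disk of radius $k$ to the cap by a degree-$k$ branched cover (a small perturbation of $z\mapsto (z/k)^k$).  This is $1$-Lipschitz of degree $k$, the domain has area $\pi(k+h)^2$, and the $k$ preimages of the tip all lie in the inner disk of radius $k$.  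Taking $k$ comparable to $h$ gives area $O(h^2)$ while your claimed lower bound would be $k\,h^2\sim h^3$; and the tip preimages are within $O(k)\ll h$ of one another, so they are not $h$-separated.  The only area inequality the $1$-Lipschitz condition actually forces is $\operatorname{Area}(f^{-1}(\text{spike}))\gtrsim d_{a,b}\,h_{a,b}$, which yields $\sum d_{a,b}=O(R^2/\log R)$---one logarithm short.

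The paper's proof is precisely your ``alternative,'' but with a decoupling you have not identified.  The area argument is applied not to the sum of degrees but to the \emph{number} of nontrivial components $\gamma_i$ of $f^{-1}(C_{a,b})$ over distant spikes: each such $\gamma_i$ winds around a tip preimage and hence encloses a ball of radius $\tfrac12\log R$, so $\#\{\gamma_i\}=O(R^2/\log^2 R)$.  The upgrade from component count to degree sum is then purely topological: in the complement $A$ of the distant tips, $f|_{\partial B}$ is homotopic to a concatenation of the $f\circ\gamma_i$, so Lemma~\ref{lem:turning}(a) gives
\[
\sum_i t(\gamma_i)\;\le\; t_{\operatorname{sq}}(f|_{\partial B})+\#\{\gamma_i\}\;=\;O(R)+O(R^2/\log^2 R).
\]
Since $\sum_i t(\gamma_i)$ equals the sum of local degrees over the distant spikes, combining with Lemma~\ref{lem:loc-degs} finishes.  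The point you are missing is that a single component can carry arbitrarily large winding number without costing proportionate area (as in the counterexample above), but the \emph{total} winding number is controlled by the boundary turning number plus the component count---and it is the latter, not the degree, that the $\log^2 R$ area argument bounds.
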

  \begin{proof}
    Write
    \[A=\RR^2 \setminus \Bigl(\ZZ^2+\Bigl(\frac{1}{2},\frac{1}{2}\Bigr)\Bigr) \cup [-\sqrt{R},\sqrt{R}]^2.\]
    Since $A$ deformation retracts to $G^{(1)} \cup [-\sqrt{R},\sqrt{R}]^2$, we
    can consider the $t_{\operatorname{sq}}$ numbers of loops in $A$.

    Let $\gamma_1,\ldots,\gamma_s$ be counterclockwise parametrizations of the
    components of $f^{-1}C_{a,b}$ for
    \[\bigl(a+{\textstyle\frac{1}{2}},b+{\textstyle\frac{1}{2}}\bigr) \notin [-\sqrt{R},\sqrt{R}]^2\]
    which enclose at least one preimage of the point
    $(a+\frac{1}{2},b+\frac{1}{2})$ and are not nested within another
    $\gamma_j$.  The turning numbers of the $f \circ \gamma_i$ are just their
    winding numbers around $(a+\frac{1}{2},b+\frac{1}{2})$.
    
    By definition, the $\gamma_i$ bound disjoint regions, and the region
    outside all of them maps under $f$ to $A$.  Fix a homotopy between
    $\partial B$ and a concatenation of the $\gamma_i$ which stays within this
    region.  Composing this homotopy with $f$ gives a homotopy in $A$ between
    $f|_{\partial B}$ and a concatenation of the $f \circ \gamma_i$.  Therefore
    \[t_{\operatorname{sq}}(f|_{\partial B}) \geq \sum_i t(\gamma_i)-\#\{\gamma_i\}\]
    by Lemma \ref{lem:turning-additive}.

    Each $\gamma_i$ winds around a region which contains a preimage
    $u \in f^{-1}(a+\frac{1}{2},b+\frac{1}{2})$.  Since $f$ is $1$-Lipschitz,
    the region contains a ball of radius $\frac{1}{2}\log R$ around $u$, and
    these regions are disjoint.  Therefore, the number of distinct $\gamma_i$
    is $O(R^2/\log^2 R)$.

    Since $f$ is $1$-Lipschitz, $f|_{\partial B}$ has length $O(R)$, so by Lemma
    \ref{lem:turning}, $t_{\operatorname{sq}}(f|_{\partial B})=O(R)$.  It follows
    that $\sum_i t(\gamma_i)=O(R^2/\log^2 R)$.  This gives the sum of local
    degrees of $f$ over points not in $[-\sqrt{R},\sqrt{R}]^2$.  Combining this
    with Lemma \ref{lem:loc-degs}, which covers the points inside this square,
    we get the result.
    \end{proof}

    Grid squares in the image of $f$ have area bounded by
    $4\log((\pi\sqrt 2+1)R)+1$, as explained at the beginning of the proof.
    Moreover, since we assumed that $f|_{\partial B}$ maps to $G^{(1)}$, $f$ has
    the same local degree over all points in a grid square.  By Lemma
    \ref{lem:many-turns},
    \[\int_B f^*d\vol_\Sigma \leq (4\log((\pi\sqrt 2+1)R)+1)O(R^2/\log^2 R)=O(R^2/\log R),\]
    as desired.
\end{proof}

\subsection{Some remarks on surfaces with nontrivial topology}

One application of Theorem \ref{thm:pi1-open}, proved in the next section, is that a Lipschitz map $\RR^2 \to S^2$ of positive asymptotic degree can miss at most two points, since $S^2$ minus three points has a free fundamental group, which has exponential growth.

Now we show:
\begin{prop} \label{lem:2d}
  There is a Lipschitz map $f:\RR^2 \to S^2$ which has positive asymptotic degree and misses two points.
\end{prop}
This is similar to the case of quasiregular maps---however, as we see in the next section, even in the setting of maps to the sphere missing points, the two cases diverge in higher dimensions.
\begin{proof}
Without loss of generality, the missing points are the north and south pole.  As part of the construction, we first give a map
\[f_0:[-1,1] \times [0,\infty) \to B_1(\RR^2) \setminus \{(0,0)\}\]
which maps the boundary of the domain to the boundary of the disk.  This map is defined explicitly on $[0,1] \times [0,\infty)$ by the polar coordinate formula
\[f_0(x,y)=(r(x,y), \theta(x,y)), \quad \text{where} \quad \biggl\{
  \begin{aligned}
    r(x,y) &= x+e^{-y}(1-x) \\
    \theta(x,y) &= y+\ln r(x,y).
  \end{aligned}\]
When $x<0$, we define $f_0(x,y)=(r(|x|,y),-\theta(|x|,y))$.  Notice that $f_0$ is Lipschitz since $\frac{\partial r}{\partial x}$, $\frac{\partial r}{\partial y}$, $r\frac{\partial \theta}{\partial x}$, and $r\frac{\partial \theta}{\partial x}$ are bounded.
\begin{figure}
    \centering
    \tikzset{
      graphs/.pic={
        \begin{scope}[rotate=90]
          \pgfplothandlerlineto
          \foreach \k in {1,2,3,4} {
            \pgfplotgnuplot[plot\k]{plot [y=\k:4.5] (exp(\k)-1)/(exp(y)-1)}
            \pgfusepath{stroke}
          }
        \end{scope}
        \draw (0,-0.5) -- (0,4.5);
      }
    }
    \begin{tikzpicture}
      \clip (-5.5,-0.5) rectangle (5.5,4.5);
      \foreach \x in {-6,-2,2} {
        \draw[line width=0.3cm, color=red!20] (\x+0.15,4.5) -- (\x+0.15,0.15) -- (\x+1.85,0.15) -- (\x+1.85,4.5);
        \draw[line width=0.3cm, color=blue!20] (\x+0.15,-0.5) -- (\x+0.15,-0.15) -- (\x+1.85,-0.15) -- (\x+1.85,-0.5);
        \draw[line width=0.3cm, color=blue!20] (\x+2.15,4.5) -- (\x+2.15,0.15) -- (\x+3.85,0.15) -- (\x+3.85,4.5);
        \draw[line width=0.3cm, color=red!20] (\x+2.15,-0.5) -- (\x+2.15,-0.15) -- (\x+3.85,-0.15) -- (\x+3.85,-0.5);
      }
      \foreach \x in {-5,-3,-1,1,3,5} {
        \pic at (\x,0) {graphs};
        \pic[xscale=-1] at (\x,0) {graphs};
      }
      \draw (-6,0) -- (6,0);
      \fill (-1,0) circle (0.05cm);
      \node[anchor=north west,inner sep=1pt] at (-1,0) {$(0,0)$};
    \end{tikzpicture}
    \caption{The domain of the map $f$.  The regions outlined in pink and blue map to the northern and southern hemisphere, respectively.  The black curves are preimages of the prime meridian, and each region bounded by the black curves is a homeomorphic preimage of the rest of the sphere.  The image is not to scale; it is vertically compressed by a factor of $2\pi$.}
    \label{fig:f}
\end{figure}
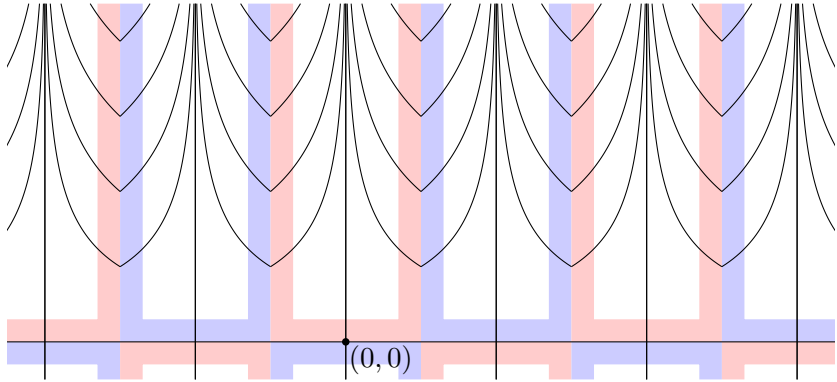

To define $f:\RR^2 \to S^2$, let $p_+,p_-:D^2 \to S^2$ be bilipschitz homeomorphisms of the disk onto the northern and southern hemisphere, respectively, which agree on $\partial D^2$.  Then we define $f$ everywhere by
\begin{align*}
    f(x,y) &= p_+(f_0(x,y)) & (x,y) &\in [-1,1] \times [0,\infty) \\
    f(x,y) &= p_-(f_0(2-x,y)) & (x,y) &\in [1,3] \times [0,\infty) \\
    f(x+4,y) &= f(x,y) \\
    f(x,-y) &= -f(x,y).
\end{align*}
For an illustration of this map, see Figure \ref{fig:f}.

Then:
\begin{itemize}
    \item $f$ is Lipschitz since $f_0$ is;
    \item $f$ has the same orientation at every regular point;
    \item The Jacobian of $f$ is bounded below by some $\epsi>0$ on the set
    \[\biggl(\bigcup_{z \in \ZZ} [2z+1/2,2z+3/2]\biggr) \times \bigl((-\infty,-1] \cup [1,\infty)\bigr).\]
\end{itemize}
The last two conditions imply that $f$ has positive asymptotic degree.
\end{proof}

\section{General open manifolds} \label{S:open}

Now we expand the discussion from surfaces to open manifolds of any dimension.  Here we give further constructions of manifolds which are elliptic, but not quasiregularly elliptic.  For example, we show that every open manifold has an elliptic metric, and at least a large class (all manifolds with a spherical end) have elliptic metrics of finite volume.  A great many of these manifolds admit no quasiregularly elliptic metric.  We also prove Theorem \ref{thm:pi1-open}, which gives a restriction on the fundamental group of an open elliptic manifold whose metric satisfies a certain rather strong assumption; this is analogous to a result for quasiregular ellipticity which requires no such assumption.  Nevertheless, as we show at the end of the section, even open subsets of compact manifolds (for which Theorem \ref{thm:pi1-open} holds) may be elliptic without being quasiregularly elliptic:
\begin{thm} \label{codim n-2}
  Let $n \geq 2$, and consider $S^{n-2}$ embedded in the round $S^n$ as the intersection with an $(n-1)$-plane through the origin in $\RR^{n+1}$.  The open manifold $S^n \setminus S^{n-2}$ is Gromov-elliptic, and therefore so is $S^n \setminus X$ for any $X \subseteq S^{n-2}$.
\end{thm}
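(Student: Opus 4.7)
The plan is to build an explicit Lipschitz map $F\colon\RR^n\to S^n\setminus S^{n-2}$ of positive asymptotic degree, using coordinates adapted to the join decomposition $S^n=S^{n-2}*S^1$. Parametrize a point of $S^n\subset\RR^{n-1}\oplus\RR^2$ as
\[
(\sin\phi\,\hat x,\,\cos\phi\cos\theta,\,\cos\phi\sin\theta)
\]
with $\phi\in[0,\pi/2]$, $\hat x\in S^{n-2}$, and $\theta\in S^1$; the round metric then takes the warped-product form
\[
ds^2=d\phi^2+\sin^2\phi\,g_{S^{n-2}}+\cos^2\phi\,d\theta^2,
\]
and the removed $S^{n-2}$ is the locus $\{\phi=\pi/2\}$.

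Fix $\epsi\in(0,\pi/4)$ and a $1$-Lipschitz sawtooth function $h\colon\RR\to[\epsi,\pi/2-\epsi]$ with $|h'|\equiv1$ a.e. Let $g\colon\RR^{n-2}\to S^{n-2}$ be a Lipschitz wrapping map, which exists because $S^{n-2}$ is Gromov-elliptic; concretely, for $n\geq 3$ take the composition $\RR^{n-2}\to T^{n-2}\xrightarrow{\deg 1}S^{n-2}$, and for $n=2$ let $g$ be a constant map to either point of $S^0$. Let $q\colon\RR\to S^1$ be the standard quotient. With coordinates $(t,u,s)\in\RR\times\RR^{n-2}\times\RR=\RR^n$, define
\[
F(t,u,s)=\bigl(h(t),\,g(u),\,q(s)\bigr)\in S^n\setminus S^{n-2}.
\]
From the warped-product formula, $\|\partial_tF\|=|h'(t)|$, $\|\partial_{u_i}F\|=\sin h(t)\cdot\|\partial_{u_i}g\|$, and $\|\partial_sF\|=\cos h(t)$ are each bounded by a universal constant, so $F$ is Lipschitz.

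The Jacobian factors as
\[
J_F(t,u,s)=|h'(t)|\,\sin^{n-2}h(t)\,\cos h(t)\,J_g(u),
\]
and the first three factors are bounded below by a positive constant $c(\epsi,n)$ because $h$ stays in the compact subinterval $[\epsi,\pi/2-\epsi]$. Integrating over an inscribed box $[-R,R]^n\subset B_{R\sqrt n}(\RR^n)$ and using the positive asymptotic degree of $g$ yields
\[
\int_{B_{R\sqrt n}(\RR^n)}F^*d\vol\gtrsim R\cdot R^{n-2}\cdot R=R^n,
\]
which shows $F$ is a wrapping map onto $S^n\setminus S^{n-2}$.

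The final claim follows by observing that for any $X\subseteq S^{n-2}$, the inclusion $\iota\colon S^n\setminus S^{n-2}\hookrightarrow S^n\setminus X$ is $1$-Lipschitz for the induced round metrics (paths in the larger open set can only be shorter) and preserves the volume form, so $\iota\circ F$ is still a Lipschitz map of positive asymptotic degree to $S^n\setminus X$. The construction is elementary; the only real technical care is to keep the range of $h$ strictly inside $(0,\pi/2)$, since otherwise either the $S^{n-2}$ factor (at $\phi=0$) or the $S^1$ factor (at $\phi=\pi/2$) of the warped product collapses and the Jacobian vanishes, so the map would fail to be full-rank.
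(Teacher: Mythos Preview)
Your map $F$ has zero asymptotic degree. The immediate error is in the Jacobian: the pullback $F^*d\vol$ carries the signed derivative $h'(t)$, not $|h'(t)|$, so the contributions from the rising and falling legs of the sawtooth cancel. More conceptually, your $F$ has image in the compact manifold-with-boundary $K=\{\phi\in[\epsi,\pi/2-\epsi]\}$. Since $H^n(K;\RR)=0$, one may write $d\vol_M|_K=d\beta$ for a smooth, hence bounded, $(n-1)$-form $\beta$ on $K$, and Stokes' theorem gives
\[
\int_{B_R}F^*d\vol_M=\int_{\partial B_R}F^*\beta=O(R^{n-1}).
\]
Thus any Lipschitz map whose image lies in a compact codimension-zero submanifold with boundary has zero asymptotic degree; confining $h$ to $[\epsi,\pi/2-\epsi]$ dooms the construction regardless of the other choices. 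The same Stokes computation shows that no product map $(t,u,s)\mapsto(\phi(t),g(u),q(s))$ can work: for any $\phi\colon\RR\to[0,\pi/2)$ one has
\[
\int_{-R}^{R}\phi'(t)\sin^{n-2}\phi(t)\cos\phi(t)\,dt=\tfrac{1}{n-1}\bigl(\sin^{n-1}\phi(R)-\sin^{n-1}\phi(-R)\bigr)=O(1).
\]

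The paper's construction is therefore necessarily non-product in the $(\phi,\theta)$ variables. It first builds (Lemma~\ref{lem:2d}) an explicit Lipschitz map $f_0\colon[-1,1]\times[0,\infty)\to D^2\setminus\{0\}$ that spirals toward the puncture with Jacobian of one sign and bounded below on a set of positive density; copies of $f_0$ are assembled into a wrapping map $\RR^2\to S^2\setminus\{2\text{ points}\}$. For $n\geq3$ the paper then takes a twisted product of $f_0$ with a wrapping map $g_{n-2}\colon\RR^{n-2}\to S^{n-2}$, composing $g_{n-2}$ with an orientation reversal on alternating strips so that the overall Jacobian keeps one sign. The real content of the theorem, which your product ansatz bypasses, is precisely the construction of $f_0$.
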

\noindent In contrast, for any countably infinite set $X$, a generalization of the Rickman--Picard theorem \cite{Rick} by Holopainen and Rickman \cite[Theorem 3.1]{HoR2} shows that $S^n \setminus X$ does not admit a metric under which it is quasiregularly elliptic.

\subsection{An elliptic metric on any open manifold} \label{S:anyopen}
Let $M$ be any open $n$-manifold and choose any complete Riemannian metric $g$.  We will show that the metric $g$ can be modified to construct an elliptic metric, essentially by cutting it open and splicing in a copy of Euclidean space.

Let $\gamma:[0,\infty) \to M$ be a ray, that is, a proper embedding parametrized by arclength.  We modify $(M,g)$ by cutting out a small tubular neighborhood of $\gamma([0,\infty))$ and replacing it with an isometric copy of $\RR^n$ missing a small tubular neighborhood of the negative $x_1$-axis.  After a bit of smoothing, we get a new Riemannian manifold $(M',g')$ which is obviously diffeomorphic to $M$ and which includes an isometric copy of $[1,\infty) \times \RR^{n-1}$.  But then the map
\[f(x_1,\ldots,x_n)=\begin{cases}
    (x_1,\ldots,x_n) & x_1 \geq 1 \\
    (1, x_2, \ldots, x_n) & x_1 \leq 1
\end{cases}\]
is a wrapping map for $M'$.

\subsection{A finite-volume elliptic manifold with any fundamental group} \label{S:fvopen}
Let $(M,g)$ be any Riemannian $n$-manifold of finite volume.  We will show that by puncturing $M$ and modifying the metric in a neighborhood of the puncture, we can construct an elliptic manifold whose volume is still finite.  In particular, if $\dim M \geq 3$, this preserves the fundamental group.  This construction also produces examples of finite-volume elliptic surfaces whose fundamental group is a free group on any number of generators.

Specifically, we build a metric $\tilde g$ on $C=S^{n-1} \times [0,\infty)$ by embedding it in $\RR^{n+1}$ as a hypersurface of revolution obtained by revolving the graph of a function $\rho(t)$.  We define $\rho$ so that for every $p \in \NN$, it has a local minimum $\rho(p)=2^{-\frac{2p}{n-1}}$ at $p$, and a local maximum $\rho(p+1/2)=2^{-\frac{p}{n-1}}$; between these, the value of $\rho$ is interpolated smoothly so that the average value over the interval is proportional to the local maximum.  Thus each ``nodule'' $N_p$ from $p$ to $p+1$ has volume proportional to $2^{-p}$, and the total volume of $(C,\tilde g)$ is finite.  In between the nodules, at $p$, the metric has a spherical neck of surface area proportional to $2^{-2p}$.

We now describe a wrapping map $f:\RR^n \to (C,\tilde g)$.  It will be defined on a sequence of cubes
\[Q_p=[-2^{p-1},2^{p-1}]^{n-1} \times [2^p,2^{p+1}].\]
These cubes fill a proportion of roughly $2^{-n}$ of any ball around the origin, and thus it is sufficient for the restriction to them to have positive asymptotic degree.  On the portion of $\RR^n$ outside $\bigcup_p Q_p$, we define $f$ by first projecting to the boundary of $\bigcup_p Q_p$ (via a $\sqrt n$-Lipschitz map) and then applying the restriction to the relevant $Q_p$.

Inside $Q_p$, we choose a submanifold $S$ isometric to a $(n-1)$-dimensional rectangle which is folded up like an accordion, with $\partial S \subseteq \partial Q_p$, and so that the exponential map on its normal $1$-disk bundle is an embedding and fills up a proportion of $Q_p$ which depends only on $n$.  Restricting this exponential map $S \times [-\frac{1}{2},\frac{1}{2}]$ gives a codimension 0 submanifold $Q_p^0 \subseteq Q_p$ which is $2$-bilipschitz to $S \times [-\frac{1}{2},\frac{1}{2}]$ and separates the rest of $Q_p$ into two components $Q_p^+$ (above) and $Q_p^-$ (below).

Inside $S$, we choose a net of $\sim 2^{(n+1)p}$ balls $\{B_i\}$ of radius $2^{-\frac{p}{n-1}}$.  We then define a map to $C$ which sends $S \times [-\frac{1}{2},\frac{1}{2}]$ to $N_p$:
\[f(x,t)=\bigl(\rho\bigl(p+t+{\textstyle\frac{1}{2}}\bigr)\theta(x),p+t+{\textstyle\frac{1}{2}}\bigr),\]
where $\theta(x)=(1,0,\ldots,0)$ for $x$ outside the $B_i$ and $\theta$ maps each $B_i$ homeomorphically to $S^{n-1} \setminus (1,0,\ldots,0)$.  This map can now be extended to the rest of $Q_p$ so that $Q_p^-$ is mapped to the neck at $p$ and $Q_p^+$ is mapped to the neck at $p+1$, with degree $\sim 2^{(n+1)p}$ on the upper and lower boundary.  This extension can be made $O(1)$-Lipschitz because the cross-section of the neck has area $2^{-p}$ times the cross-sectional area of the center of a nodule; this allows all the homeomorphic preimages of the neck to be routed through the space outside $Q_p^0$.  See Figure \ref{fig:Q_p} for an illustration.
\begin{figure}
    \centering
    \begin{tikzpicture}
      \draw[very thick] (0,0) rectangle (7,7);
      \draw[thick] (0.25,6) arc (180:0:0.75);
      \draw[thick] (0.75,6) arc (180:0:0.25);
      \draw[thick] (2.25,6) arc (180:0:0.75);
      \draw[thick] (2.75,6) arc (180:0:0.25);
      \draw[thick] (4.25,6) arc (180:0:0.75);
      \draw[thick] (4.75,6) arc (180:0:0.25);
      \draw[thick] (1.25,1) arc (180:360:0.75);
      \draw[thick] (1.75,1) arc (180:360:0.25);
      \draw[thick] (3.25,1) arc (180:360:0.75);
      \draw[thick] (3.75,1) arc (180:360:0.25);
      \draw[thick] (5.25,1) arc (180:360:0.75);
      \draw[thick] (5.75,1) arc (180:360:0.25);
      \foreach \x in {0.25,0.75,...,6.75} {
        \draw[thick] (\x,1)--(\x,6);
      }
      \draw[thick] (.75,1) arc (0:-90:0.75);
      \draw[thick] (.25,1) arc (0:-90:0.25);
      \draw[thick] (6.25,6) arc (180:90:0.75);
      \draw[thick] (6.75,6) arc (180:90:0.25);
      \draw (2,7) .. controls (2,4) .. (2.25,4)--(2.75,4) .. controls (3,4) .. (3,0) (2.04,7) .. controls (2.04,4.25) .. (2.25,4.25)--(2.75,4.25) .. controls (3.04,4.25) .. (3.04,0);
      \draw (2.1,7) .. controls (2.1,4.75) .. (2.25,4.75)--(2.75,4.75) .. controls (3.1,4.75) .. (3.1,0) (2.14,7) .. controls (2.14,5) .. (2.25,5)--(2.75,5) .. controls (3.14,5) .. (3.14,0);
      \draw (1.9,7) .. controls (1.9,3.25) .. (2.25,3.25)--(2.75,3.25) .. controls (2.9,3.25) .. (2.9,0) (1.94,7) .. controls (1.94,3.5) .. (2.25,3.5)--(2.75,3.5) .. controls (2.94,3.5) .. (2.94,0);
      \draw (4,6.5) node {$Q_p^+$};
      \draw (4.5,3.5) node {$Q_p^0$};
      \draw (5,0.5) node {$Q_p^-$};
    \end{tikzpicture}
    \caption{A schematic illustration of $Q_p$, with the embedding of $S \times [-\frac{1}{2},\frac{1}{2}]$ and several preimages of the corresponding nodule $N_p$ shown.}
    \label{fig:Q_p}
\end{figure}
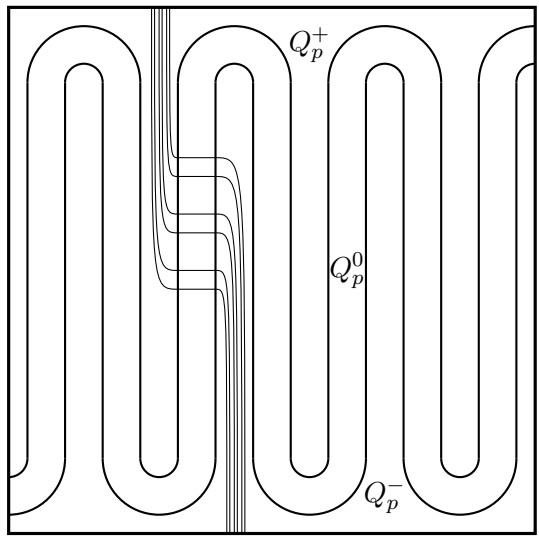

The last step is to match the map in the middle section of the bottom surface of $Q_p$ with that of the top surface of $Q_{p-1}$.  If we don't modify the map further, these two faces are covered approximately uniformly by preimages of the neck at $p$.  The top surface of $Q_{p-1}$ maps to the neck at $p$ with degree $2^{(n+1)(p-1)}$.  The middle section maps of the bottom surface of $Q_{p-1}$ with degree $2^{(n+1)p}/2^{n-1}=4 \cdot 2^{(n+1)(p-1)}$.  Thus in order to match the preimages one-to-one we need to route some of the preimages away to reduce the density in the middle section.  This can be done with Lipschitz constant $O(1)$ as long as we have $\sim 2^p$ vertical space, for example if $S$ does not intersect the bottom $10\%$ of $Q_p$.

Since the volume of the nodule $N_p$ is about $2^{-p}$, and $Q_p$ is mapped $\sim 2^{(n+1)p}$ times over $N_p$, the degree of $f|_{Q_p}$ is proportional to the volume of $Q_p$.  Therefore $f$ has positive asymptotic degree.

\subsection{Proof of Theorem \ref{thm:pi1-open}}
Now we prove a restriction on the fundamental group of elliptic manifolds satisfying certain hypotheses which exclude the behavior in the previous construction.  Effectively, these are manifolds over which any wrapping map equidistributes, that is, it must hit every point roughly equally often.  This condition holds for quasiregular maps to any manifold, so it is not surprising that it shows up here.

\begin{repthm}{thm:pi1-open}
Suppose that $M$ is a Riemannian $n$-manifold of finite volume, and suppose that the volume form of $M$ can be written as $\omega+d\alpha$ where $\omega$ is compactly supported and $\alpha$ is bounded.  If $M$ is elliptic, then any finitely generated subgroup of it has growth $O(R^n)$.
\end{repthm}
\noindent Note that $\pi_1(M)$ may be an infinitely generated group, for example $\QQ$.  Each of its finitely generated subgroups is virtually nilpotent by Gromov's polynomial growth theorem, and the nilpotency class is bounded by $n$ (and in fact around $\sqrt n$).  If one could bound the index of the nilpotent subgroups, then $\pi_1(M)$ itself would be virtually nilpotent.

The hypothesis holds for example for manifolds of pinched negative curvature as well as real analytic manifolds of nonpositive curvature, including locally symmetric spaces.  The proof in both cases is the same and follows from results described in \cite[\S10, 13]{BGS}.

We now relate the hypothesis of Theorem \ref{thm:pi1-open} to a more familiar and geometrically intuitive condition.  If $M$ is a Riemannian $n$-manifold of finite volume (not necessarily complete), the \emph{Cheeger isoperimetric constant} of $M$ is defined by
\[h(M) = \inf \left\{\frac{\vol_{n-1}(\partial A)}{\min\{\vol_n A,\vol_n A^c\}} : A \subset M \text{open}\right\}.\]
When $M$ is noncompact, an equivalent (and more commonly used) definition is
\[h(M)=\inf \left\{\frac{\vol(\partial A)}{\vol A} : A\text{ is a relatively compact open subset with }\vol A \leq \frac{1}{2}\vol M\right\}.\]
To see the equivalence, note that in the first definition one can make sure that the smaller subset is relatively compact (without changing the volumes too much) by intersecting it with a large ball around some base point; the coarea formula ensures that one can pick the radius of the ball so that $\vol(\partial A)$ is affected an arbitrarily small amount.

The Cheeger constant of a manifold can be zero, for example for those constructed in Section \ref{S:fvopen}.  In future work, we will show that $M$ has nonzero Cheeger constant if and only if it satisfies the hypothesis in Theorem \ref{thm:pi1-open}.  This equivalence has an analogue for manifolds of infinite volume stated by Gromov \cite[\S4.1]{GrHMGA} which can be proved using similar techniques.

\begin{proof}[{Proof of Theorem \ref{thm:pi1-open}.}]
  Let $f:\RR^n \to M$ be a wrapping map.  Write $d\vol_M=\omega+d\alpha$, where
  $\omega$ is compactly supported and $\alpha$ is bounded.  Then by Stokes'
  theorem,
  \[\int_{B(R)} f^*\omega=\int_{B(R)} f^*d\vol_M-\int_{\partial B(R)} f^*\alpha,\]
  and therefore
  \[\left\lvert\int_{B(R)} f^*\omega\right\rvert \geq \left\lvert\int_{B(R)} f^*d\vol_M \right\rvert-O(R^{n-1}).\]
  In particular, suppose $\omega$ is supported on a compact set $K$.  Then
  \[\limsup_{R \to \infty} \vol(B(R) \cap f^{-1}(K)) \geq \epsi R^n\]
  for some $\epsi>0$.  Choose a sequence $R_j \to \infty$ such that
  \[\vol(B(R_j) \cap f^{-1}(K)) \geq \epsi R_j^n.\]

  Now, we have freedom to choose $K$ as large as we want, so let's assume that
  \begin{enumerate}[(i)]
  \item $K$ is a connected, smooth codimension zero submanifold of $M$.
  \item $K$ contains loops generating $H$, where $H$ is some finitely generated
    subgroup of $\pi_1(M)$.
  \end{enumerate}
  Moreover, without loss of generality, $f$ is smooth and transverse to
  $\partial K$.  Let $i:K \to M$ be the inclusion map.  We will use a version
  of the Varopoulos isoperimetric inequality, as proved by Gromov in
  \cite[\S6.E]{GrMS}, to bound the growth of the finitely generated subgroup
  $G=i_*\pi_1(K) \subseteq \pi_1(M)$, which contains $H$.  This inequality
  relates the growth of a finitely generated group to the isoperimetry of
  subsets of its Cayley graph.  The idea of the argument is rather simple, but
  some technicalities seem to be unavoidable.  We choose to use arguments from
  geometric measure theory; a more elementary, but messier argument would use
  an approximation of $f|_{f^{-1}K}$ by a simplicial map.

  The map $f$ lifts to the universal cover of $M$.  Let $\tilde K$ be a
  connected component of the preimage of $K$ under the covering map; this is a
  cover of $K$ with deck group $\pi_1(K)/\ker i_* \cong G$, where $i:K \to M$
  is the inclusion map.  We lift $f$ to a (componentwise $1$-Lipschitz) map
  $\tilde f:f^{-1}K \to \tilde K$ by choosing a base point for every connected
  component.  Now consider the integral $n$-current
  \[E_j=\tilde f_*[f^{-1}K \cap B(R_j)] \in \mathbf I_n(\tilde K,\partial \tilde K).\]
  Then $\mass(E_j) \geq \epsi R_j^n$ by definition, and
  \[\mass(\partial E_j)=\mass \tilde f_*(f^{-1}K \cap \partial B(R_j)) \leq \mass(\partial B(R_j))=O(R_j^{n-1}).\]

  Choose a (finite) triangulation $\mathcal T$ of $K$; this lifts to a
  $G$-invariant triangulation $\widetilde{\mathcal T}$ of $\tilde K$.  By a
  version of the Federer--Fleming deformation theorem, see
  \cite[Thm.~10.3.3]{ECHLPT}, we can write $E_j=P_j+Q_j$, where $P_j$ is a
  simplicial $n$-chain in $\widetilde{\mathcal T}$ and
  \begin{align*}
    \mass(P_j) &\leq C(\mathcal T)\mass(E_j), \\
    \mass(Q_j), \mass(\partial P_j) &\leq C(\mathcal T)\mass(\partial E_j).
  \end{align*}
  In particular, there is some $n$-simplex $\Delta \in \mathcal T$ such that
  for infinitely many $j$, the total multiplicity of $P_j$ on lifts of $\Delta$
  is proportional to $R_j^n$.

  Let $S$ be a finite generating set for $G$.  Choose a basepoint
  $x \in \Delta$ and loops representing the elements of $S$ so that the
  resulting graph is transverse to $\mathcal T$.  Lifting this to $\tilde K$
  gives an embedding in $\tilde K$ of the Cayley graph $\Gamma$ of $G$ which is
  transverse to $\widetilde{\mathcal T}$.  Now we define a cochain
  $\sigma_j \in C^0(\Gamma)$ by
  \[\sigma_j(v)=\text{multiplicity of $P_j$ on }\Delta.\]
  Then $\lVert\sigma_j\rVert_1=\Theta(R_j^n)$.  Moreover, the value of
  $\delta\sigma_j \in C^1(\Gamma)$ on an edge of $\Gamma$ is the intersection
  number of the corresponding path in $\tilde K$ with $\partial P_j$, and
  therefore $\lVert \delta\sigma_j \rVert_1=O(R_j^{n-1})$.

  We now recall Gromov's argument to show that $G$ has growth $O(R^n)$.  Write
  $\sigma_j \cdot g$ for the right action of $g \in G$ on $\sigma_j$ (where the
  invariant action of $G$ on $\tilde K$ and $\Gamma$ is from the left).  For
  $s \in S$, this action moves each vertex along the adjacent edge
  corresponding to $s$, and we have
  \[\lVert \sigma_j \cdot s-\sigma_j \rVert_1=\sum_{\text{lifts $\tilde s$ of }s} \lvert \delta\sigma_j(\tilde s) \rvert \leq \lVert \delta\sigma_j \rVert_1.\]
  Therefore, more generally, writing $\lvert g \rvert$ for the word length of
  $g \in G$ in terms of $S$, we get that
  \[\lVert \sigma_j \cdot g-\sigma_j \rVert_1 \leq \lvert g \rvert \cdot \lVert \delta\sigma_j \rVert_1.\]
  Let $r$ be minimal such that the ball $B_r(G)$ in the word length metric
  satisfies $\#B_r(G) \geq 2\lVert\sigma_j\rVert_1$.  Then for every
  $a \in \supp(\sigma_j)$, for at least half of the $g \in B_r(G)$, $a \cdot g$
  is not in the support of $\sigma_j$.  Averaging the previous inequality over
  $B_r(G)$, we see that
  \[\frac{1}{2}\lVert \sigma_j \rVert_1 \leq r\lVert \delta\sigma_j \rVert_1,\]
  and therefore $r=\Omega(R_j)$ and
  \[\#B_r(G) \leq \# S \cdot \lVert \sigma_j \rVert_1=O(R_j^n)=O(r^n).\]
  Since this is true for a sequence of $R_j \to \infty$, $G$ has growth
  $O(R^n)$.

  Recall that $G$ contained $H$, an arbitrarily chosen finitely generated
  subgroup of $\pi_1(M)$.  Therefore $H$ also has growth $O(R^n)$.
\end{proof}

\subsection{Proof of Theorem \ref{codim n-2}}
Proposition \ref{lem:2d} shows that the theorem holds for $n=2$.  We will use the intermediate map $f_0$ defined in that lemma to construct maps $\RR^n \to S^n \setminus S^{n-2}$ of positive asymptotic degree for $n \geq 3$.
    
First define a Lipschitz map of positive asymptotic degree $g_{n-2}:\RR^{n-2} \to S^{n-2}$; for example, we can compose the universal covering map $\RR^{n-2} \to T^{n-2}$ with a map that collapses the $(n-3)$-skeleton of the torus.

Thinking of $S^k$ as the unit sphere in $\RR^{k+1}$, define a quotient map $q:D^2 \times S^{n-2} \to S^n$ by
\[q(\mathbf x,\mathbf z)=\bigl(p(\mathbf x),\sqrt{1-\lVert p(\mathbf x) \rVert^2}\mathbf z\bigr),\]
where $p:D^2 \to D^2$ is a homeomorphism defined so that $q|_{D^2 \times \{\mathbf z\}}$ is a bilipschitz embedding.  Finally let $\rho:\RR^{n-2} \to \RR^{n-2}$ be an orientation-reversing map (say, flipping one coordinate).

Now we define our desired map similarly to the way $f$ was defined above:
\begin{align*}
    f_n(x,y,\mathbf z) &= q(f_0(x,y),g_{n-2}(\mathbf z))
    & (x,y) &\in [-1,1] \times [0,\infty) \times \RR^{n-2} \\
    f_n(x,y,\mathbf z) &= q(f_0(2-x,y),g_{n-2}\rho(\mathbf z))
    & (x,y) &\in [1,3] \times [0,\infty) \times \RR^{n-2} \\
    f_n(x+4,y,\mathbf z) &= f_n(x,y,\mathbf z) \\
    f(x,-y,\mathbf z) &= f_n(x,y,\rho(\mathbf z)).
\end{align*}
This map is continuous since on the boundaries between subdomains, the map depends only on $x$ and $y$.  As in the two-dimensional case, a computation shows that $f_n$ is Lipschitz, has the same orientation at every regular point, and that its Jacobian is bounded away from zero on, e.g., the set
\[\biggl(\bigcup_{z \in \ZZ} \biggl[4z+\frac 14,4z+\frac 34\biggr]\biggr) \times [1,\infty) \times \{\mathbf z \in \RR^{n-2}: J_{g_{n-2}}(z)>\epsi\},\]
for some $\epsi>0$.  This implies that it is of positive asymptotic degree.

\subsection*{Acknowledgements}
The authors would like to thank Ben Lowe for comments drawing attention to the parallels between their previous work, which sparked this collaboration.  We would also like to thank Pekka Pankka and Susanna Heikkil\"a for useful conversations and comments, and Shmuel Weinberger for suggesting the proof of Theorem \ref{thm:TxS}.  Finally we would like to thank an anonymous referee for a very detailed reading that greatly improved the exposition.

\bibliographystyle{amsplain}
\bibliography{ellipticity}
\end{document}